\newtheorem{theorem}{Theorem}[section]
\newtheorem{corollary}[theorem]{Corollary}
\newtheorem{lemma}[theorem]{Lemma}
\newtheorem{proposition}[theorem]{Proposition}
\newtheorem{example}[theorem]{Example}
\begin{document}

\title{\bf On the minimum cut-sets of the power graph of a finite cyclic group, II}
\author{Sanjay Mukherjee\and Kamal Lochan Patra \and Binod Kumar Sahoo}
\date{}

\maketitle

\begin{abstract}
The power graph $\mathcal{P}(G)$ of a finite group $G$ is the simple graph with vertex set $G$ and two distinct vertices are adjacent if one of them is a power of the other. Let $n=p_1^{n_1}p_2^{n_2}\cdots p_r^{n_r},$ where $p_1,p_2,\ldots,p_r$ are primes with $p_1<p_2<\cdots <p_r$ and $n_1,n_2,\ldots, n_r$ are positive integers. For the cyclic group $C_n$ of order $n$,
the minimum cut-sets of $\mathcal{P}(C_n)$ are characterized in \cite{cps} for $r\leq 3$. Recently, in \cite{MPS}, certain cut-sets of $\mathcal{P}(C_n)$ are identified such that any minimum cut-set of $\mathcal{P}(C_n)$ must be one of them. In this paper, for $r\geq 4$, we explicitly determine the minimum cut-sets, in particular, the vertex connectivity of $\mathcal{P}(C_n)$ when: (i) $n_r\geq 2$, (ii) $r=4$ and $n_r=1$, and (iii) $r=5$, $n_r=1$, $p_1\geq 3$.\\
\end{abstract}

\noindent {\bf Key words:} Power graph, Cut-set, Vertex connectivity, Cyclic group, Euler's totient function\\
{\bf Mathematics subject classification 2020:} 05C25, 05C40, 20K99

\section{\bf Introduction}

Let $\Gamma$ be a finite simple connected graph with vertex set $V$. A subset $X$ of $V$ is called a {\it cut-set} of $\Gamma$ if the induced subgraph of $\Gamma$ with vertex set $V\setminus X$ is  disconnected. Note that $\Gamma$ has a cut-set if and only if $\Gamma$ is not a complete graph. A cut-set $X$ of $\Gamma$ is called a {\it minimum cut-set} if $|X|\leq |Y|$ for any cut-set $Y$ of $\Gamma$. The {\it vertex connectivity} of $\Gamma$, denoted by $\kappa(\Gamma)$, is the minimum number of vertices to be removed from $V$ such that the induced subgraph of $\Gamma$ on the remaining vertices is disconnected or has only one vertex. The latter case arises only when $\Gamma$ is a complete graph. So $\kappa(\Gamma)=|V|-1$ if $\Gamma$ is a complete graph. If $\Gamma$ is not a complete graph and $X$ is a minimum cut-set of $\Gamma$, then $\kappa(\Gamma)=|X|$.

\subsection{Power graphs}

There are various graphs associated with groups which have been studied in the literature, e.g., Cayley graphs, commuting graphs. The notion of directed power graph of a group was introduced by Kelarev and Quinn in \cite{ker1}. The underlying undirected graph, simply termed as the `power graph' of that group, was first considered by Chakrabarty et al.~in \cite{ivy}. Several researchers have then studied both the directed and undirected power graphs of groups from different viewpoints. We refer to the survey papers \cite{AKC, KSCC} and the references therein for more on these graphs.

The {\it power graph} of a finite group $G$, denoted by $\mathcal{P}(G)$, is the simple graph with vertex set $G$ and two distinct vertices are adjacent if one of them is a power of the other. Since the identity element of $G$ is adjacent to all other vertices, $\mathcal{P}(G)$ is a connected graph of diameter at most two. Chakrabarty et al.~proved in  \cite{ivy} that $\mathcal{P}(G)$ is a complete graph if and only if $G$ is a cyclic group of prime power order.

Cameron and Ghosh \cite{cam-2, cam-1} and Mirzargar et al.~\cite{mir, mir-1} have studied the isomorphism problem associated with finite groups and their power graphs. Feng et al.~\cite{FMW-1} described the full automorphism group of the power graph of a finite group. Kirkland et al.~\cite{kirkland} derived explicit formulas concerning the complexity (that is, number of spanning trees) of power graphs of various finite groups. Graph parameters such as chromatic number \cite{MaFeng2015,shitov}, matching number \cite{Cameron-SS}, independence number \cite{ma-fu-lu}, metric dimension \cite{FMW}, strong metric dimension \cite{MaFengWang2018, Ma} etc.~of power graphs of finite groups have also been studied in the literature. It was proved in \cite{dooser, FMW} that the power graph of a finite group is perfect, in particular, the clique number and the chromatic number coincide. Explicit formula for the clique number of the power graph of a finite cyclic group was obtained in \cite{dooser, mir}.

For a given finite group $G$, characterizing the minimum cut-sets of $\mathcal{P}(G)$ in general and determining the vertex connectivity of $\mathcal{P}(G)$ in particular are interesting problems. It was proved in \cite[Theorem 1.3]{cur} and \cite[Corollary 3.4]{cur-1} that, among all finite groups of order $n$, the power graph $\mathcal{P}(C_n)$ of the cyclic group $C_n$ of order $n$ has the maximum number of edges and has the largest clique. It is thus expected that $\kappa(\mathcal{P}(C_n))$ would be larger compared to the vertex connectivity of the power graph of any non-cyclic group of order $n$. In this paper, we explicitly determine the minimum cut-sets and the vertex connectivity of $\mathcal{P}(C_n)$ when $n$ is divisible by the square of its largest prime divisor.

\subsection{Cut-sets of $\mathcal{P}(C_n)$}

Throughout the paper, $n\geq 2$ is an integer and $r$ denotes the number of distinct prime divisors of $n$. We write the prime power factorization of $n$ as
$$n=p_1^{n_1}p_2^{n_2}\cdots p_r^{n_r},$$
where $p_1,p_2,\ldots,p_r$ are primes with $p_1<p_2<\cdots <p_r$ and $n_1,n_2,\ldots, n_r$ are positive integers. For a positive integer $m$, we denote the set $\{1,2,\dots,m\}$ by $[m]$.

For every positive divisor $d$ of $n$, there is a unique (cyclic) subgroup of $C_n$ of order $d$. It follows that two distinct elements $x,y$ of $C_n$ are adjacent in $\mathcal{P}(C_n)$ if and only if $o(x)\mid o(y)$ or $o(y)\mid o(x)$, where $o(z)$ denotes the order of an element $z\in C_n$. The lattice of all subgroups of $C_n$ with respect to inclusion is isomorphic to the lattice of all divisors of $n$ with respect to divisibility. If $H_1,H_2,\ldots ,H_k$ are subgroups of $C_n$, then the number of elements in the intersection $H_1\cap H_2\cap\ldots\cap H_k$ is equal to the greatest common divisor of the integers $|H_1|,|H_2|,\ldots ,|H_k|$.

For a positive divisor $d$ of $n$, we denote by $E_d$ the set of all elements of $C_n$ whose order is $d$, and by $S_d$ the set of all elements of $C_n$ whose order divides $d$.
Then $S_d$ is the unique (cyclic) subgroup of $C_n$ of order $d$ and $E_d$ is precisely the set of generators of $S_d$. Thus $\left\vert S_d\right\vert =d$ and $\left\vert E_d\right\vert =\phi(d)$, where $\phi$ is the Euler's totient function. Any two distinct elements of $E_d$ are adjacent in $\mathcal{P}(C_n)$.

If $r=1$, then $n$ is a prime power and so $\mathcal{P}(C_n)$ is a complete graph. In this case, there is no cut-set of $\mathcal{P}(C_n)$ and $\kappa\left(\mathcal{P}\left(C_n\right)\right)=n-1$. Therefore, we shall consider that $r\geq 2$. Any cut-set of $\mathcal{P}(C_n)$ must contain the sets $E_n$ and $E_1$, as each element of $E_n\cup E_1$ is adjacent with all other vertices of $\mathcal{P}(C_n)$. By \cite[Proposition 3.10]{cps-2}, any minimum cut-set of $\mathcal{P}(C_n)$ contains at most two of the sets $E_{\frac{n}{p_1}}, E_{\frac{n}{p_2}},\ldots, E_{\frac{n}{p_r}}$. If $n_r\geq 2$, then any minimum cut-set of $\mathcal{P}(C_n)$ contains at most one of $E_{\frac{n}{p_1}}, E_{\frac{n}{p_2}},\ldots, E_{\frac{n}{p_r}}$ by \cite[Proposition 4.6]{cps-2}.

In \cite{MPS}, the authors identified certain cut-sets of $\mathcal{P}(C_n)$ such that any minimum cut-set of $\mathcal{P}(C_n)$ must be one of them. These cut-sets are of two types as described below.\bigskip

\noindent\underline{\bf Cut-sets of the first type}:\medskip\\
Let $r\geq 2$, $a\in [r]$ and $s\in [n_a]$. Denote by $Q_a^s$ the union of the subgroups $S_{\frac{n}{p_ip_a^s}}$ of $C_n$, where $i\in [r]\setminus\{a\}$. Define the subset $Z_a^s$ of $C_n$ by
$$Z_a^s:=E_n\cup E_{\frac{n}{p_a}}\cup E_{\frac{n}{p_a^2}}\cup\ldots\cup E_{\frac{n}{p_a^{s-1}}}\cup Q_a^s$$
and the integer $\beta_a^s$ by
$$\beta_a^s:=\phi(n) + \frac{n}{p_1p_2\cdots p_r}\cdot \frac{1}{p_a^{s-1}}\cdot \left[\frac{p_1p_2\cdots p_{r}}{p_a} +\phi\left(\frac{p_1p_2\cdots p_{r}}{p_a}\right)\left(p_a^{s-1}-2\right) \right].$$
Note that $Z_a^s$ contains none or exactly one of the sets $E_{\frac{n}{p_1}}, E_{\frac{n}{p_2}},\ldots, E_{\frac{n}{p_r}}$ according as $s=1$ or $s>1$ (in the latter case, $Z_a^s$ contains $E_{\frac{n}{p_a}}$).
By \cite[Propositions 3.1, 3.3]{MPS}, $Z_a^s$ is a cut-set of $\mathcal{P}(C_n)$ of size $\beta_a^s$ for $r\geq 2$. We thus have the following:

\begin{proposition}[\cite{MPS}]\label{prop-Z-a-s}
If $r\geq 2$, then $\kappa(\mathcal{P}(C_n))\leq \beta_a^s=|Z_a^s|$ for every $a\in [r]$ and $s\in [n_a]$.\smallskip
\end{proposition}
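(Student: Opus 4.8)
The plan is as follows. Since $r\ge 2$, the group $C_n$ is not of prime power order, hence $\mathcal{P}(C_n)$ is not complete and does possess cut-sets; so it suffices to prove that $Z_a^s$ is a cut-set of $\mathcal{P}(C_n)$ and that $|Z_a^s|=\beta_a^s$, since then $\kappa(\mathcal{P}(C_n))\le|Z_a^s|=\beta_a^s$ by the definition of vertex connectivity. (This is precisely what is recorded in \cite[Propositions 3.1, 3.3]{MPS}; I indicate how one reconstructs it.) Throughout I use that distinct $x,y\in C_n$ are adjacent in $\mathcal{P}(C_n)$ iff $o(x)\mid o(y)$ or $o(y)\mid o(x)$, and I write $v_p(m)$ for the exponent of the prime $p$ in $m$.

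\textbf{$Z_a^s$ is a cut-set.} I would partition $C_n\setminus Z_a^s$ as $A\sqcup B$ with $A=\{y\notin Z_a^s:\ v_{p_a}(o(y))\le n_a-s\}$ and $B=\{y\notin Z_a^s:\ v_{p_a}(o(y))\ge n_a-s+1\}$, this being a partition by definition. The first point is the membership criterion for $Q_a^s$: since $S_{n/(p_ip_a^s)}$ consists exactly of the elements whose order $d$ divides $n/(p_ip_a^s)$, i.e.\ satisfies $v_{p_i}(d)\le n_i-1$ and $v_{p_a}(d)\le n_a-s$, an element $z$ lies in $Q_a^s$ iff $v_{p_a}(o(z))\le n_a-s$ and $v_{p_i}(o(z))<n_i$ for some $i\ne a$. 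Hence for $y\in A$ one necessarily has $v_{p_i}(o(y))=n_i$ for every $i\ne a$. Next, $A$ and $B$ are nonempty: $o(y)=n/p_a^{n_a}$ gives an element of $A$ and $o(y)=p_a^{n_a}$ one of $B$ (both avoid $Z_a^s$; the checks use only $s\ge 1$ and, for the second, $r\ge 2$). Finally, there is no edge between $A$ and $B$: if $x\in A$ and $y\in B$ were adjacent, then from $v_{p_a}(o(x))\le n_a-s<n_a-s+1\le v_{p_a}(o(y))$ we get $o(y)\nmid o(x)$, so $o(x)\mid o(y)$; but $v_{p_i}(o(x))=n_i$ for all $i\ne a$, whence $v_{p_i}(o(y))=n_i$ for all $i\ne a$ and therefore $o(y)=n/p_a^{j}$ with $j=n_a-v_{p_a}(o(y))\le s-1$, forcing $y\in E_{n/p_a^{j}}\subseteq Z_a^s$ --- a contradiction. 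Thus the induced subgraph on $C_n\setminus Z_a^s$ has two nonempty parts with no edge between them, so $Z_a^s$ is a cut-set.

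\textbf{Computing $|Z_a^s|$.} The sets $E_n,E_{n/p_a},\dots,E_{n/p_a^{s-1}}$ have pairwise distinct orders, and each of their elements has $p_a$-valuation at least $n_a-s+1$, while every element of $Q_a^s$ has $p_a$-valuation at most $n_a-s$; hence the union defining $Z_a^s$ is disjoint and $|Z_a^s|=\phi(n)+\sum_{j=1}^{s-1}\phi(n/p_a^{j})+|Q_a^s|$. For $|Q_a^s|$ I would use inclusion--exclusion together with the fact that an intersection of subgroups $S_d$ is again an $S$-subgroup and $|S_d|=d$: since $\bigcap_{i\in T}S_{n/(p_ip_a^s)}=S_{n/(p_a^{s}\prod_{i\in T}p_i)}$ for $\emptyset\ne T\subseteq[r]\setminus\{a\}$, this yields $|Q_a^s|=\frac{n}{p_a^{s}}\bigl(1-\prod_{i\ne a}(1-1/p_i)\bigr)$. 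Writing $P=p_1p_2\cdots p_r$ and $P_a=P/p_a$, one has $\phi(n/p_a^{j})=\dfrac{n\,\phi(P_a)(p_a-1)}{p_a^{j}P}$ for $1\le j\le s-1$, so $\sum_{j=1}^{s-1}\phi(n/p_a^{j})=\dfrac{n\,\phi(P_a)}{P}\bigl(1-p_a^{-(s-1)}\bigr)$ after summing a finite geometric series; adding the three contributions and rearranging (using $P=p_aP_a$ and $\phi(P_a)=\prod_{i\ne a}(p_i-1)$) converts the right-hand side into exactly the displayed expression for $\beta_a^s$. This is a routine, if slightly tedious, computation.

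\textbf{Main obstacle.} The only genuinely delicate step is choosing the right partition in the cut-set argument. The superficially natural guess --- split $C_n\setminus Z_a^s$ by whether $p_a\mid o(y)$ --- is false: e.g.\ for $n=p_a^{2}pq$ and $s=1$ an element of order $pq$ is adjacent to one of order $p_apq$, both lying outside $Z_a^1$. The correct invariant is the finer ``$o(y)$ carries the full $p_i$-part $p_i^{n_i}$ of $n$ for every $i\ne a$'' on the $A$-side, and one must verify carefully, via $p_a$-adic valuations and the membership criterion for $Q_a^s$, both that this is forced for all of $A$ and that it obstructs every potential $A$-$B$ edge; this last verification is exactly where one needs that the sets $E_{n/p_a^{j}}$ ($0\le j\le s-1$) were included in $Z_a^s$. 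Everything else --- nonemptiness of the two parts, disjointness of the union, and the inclusion--exclusion count --- is bookkeeping.
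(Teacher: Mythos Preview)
Your proof is correct. The paper does not actually prove this proposition---it merely records the result as a consequence of \cite[Propositions 3.1, 3.3]{MPS}, exactly as you note in your opening paragraph---so there is no ``paper's own proof'' to compare against beyond the citation. Your reconstruction is the natural argument: the partition of $C_n\setminus Z_a^s$ by the $p_a$-adic valuation threshold $n_a-s$, together with the observation that surviving low-valuation elements must carry full $p_i$-parts for all $i\neq a$, is precisely what makes the cut-set work, and your inclusion--exclusion count of $|Q_a^s|$ and the subsequent geometric-series bookkeeping are correct and yield $\beta_a^s$ as claimed.
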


\noindent\underline{\bf Cut-sets of the second type}:\medskip\\
Let $r\geq 3$, $a,b\in [r]$ with $a\neq b$, $s\in [n_a]$ and $t\in [n_b]$. Denote by $K_{a,b}^{s,t}$ the set of all the non-generators of the cyclic subgroup $S_{\frac{n}{p_a^s p_b^t}}$ of $C_n$, and by $H_{a,b}^{s,t}$ the union of the mutually disjoint sets $E_{\frac{n}{p_a^ip_b^j}}$, where $0\leq i\leq s$, $0\leq j\leq t$ and $(i,j)\neq (s,t)$. Define the subset $X_{a,b}^{s,t}$ of $C_n$ by
$$X_{a,b}^{s,t}:=H_{a,b}^{s,t}\cup K_{a,b}^{s,t}.$$
Note that $X_{a,b}^{s,t}=X_{b,a}^{t,s}$ and it contains exactly two of the sets $E_{\frac{n}{p_1}}, E_{\frac{n}{p_2}},\ldots, E_{\frac{n}{p_r}}$, namely $E_{\frac{n}{p_a}}$ and $E_{\frac{n}{p_b}}$. By \cite[Proposition 3.5]{MPS}, $X_{a,b}^{s,t}$ is a cut-set of $\mathcal{P}(C_n)$ for $r\geq 3$. We shall determine the cardinality of $X_{a,b}^{s,t}$ in Section $4$.

\subsection{Known results}\label{known-results}

Suppose that $r=2$. Then $n=p_1^{n_1}p_2^{n_2}$. In this case, $\kappa\left(\mathcal{P}\left(C_n\right)\right)=\beta_2^1$ by \cite[Theorem 2.38]{panda}, also see \cite[Theorem 1.3(i),(iii)]{cps}. Further, if $p_1 \geq 3$, then $2\phi(p_1) >p_1$ and so $Z_2^1$ is the only minimum cut-set of $\mathcal{P}(C_n)$ by \cite[Proposition 4.4]{cps}.
If $p_1=2$, then it follows from the proof of \cite[Theorem 1.3(iii)]{cps} that the number of distinct minimum cut-sets of $\mathcal{P}(C_n)$ is $n_2$, namely the cut-sets $Z_2^s$ for $s\in [n_2]$.

Suppose that $r=3$. Then $n=p_1^{n_1}p_2^{n_2}p_3^{n_3}$. If $p_1\geq 3$, then $2\phi(p_1p_2) >p_1p_2$ and so $\kappa\left(\mathcal{P}\left(C_n\right)\right)=\beta_3^1$ by \cite[Theorem 1.3(i)]{cps}. In this case, $Z_3^1$ is the only minimum cut-set of $\mathcal{P}(C_n)$ by \cite[Proposition 4.4]{cps}. If $p_1=2$, then $\kappa\left(\mathcal{P}\left(C_n\right)\right)=\beta_3^{n_3} $ by \cite[Theorem 1.5]{cps}. In this case, it follows from the proof of \cite[Theorem 1.5]{cps} that $Z_3^{n_3}$ is the only minimum cut-set of $\mathcal{P}(C_n)$.

Suppose that $r\geq 4$. Then $2\phi(p_1p_2\cdots p_{r-1})\neq p_1p_2\cdots p_{r-1}$. If $2\phi(p_1p_2\cdots p_{r-1})> p_1p_2\cdots p_{r-1}$, then $\kappa\left(\mathcal{P}\left(C_n\right)\right)=\beta_r^1$ by \cite[Theorem 1.3(i)]{cps} and $Z_r^1$ is the only minimum cut-set of $\mathcal{P}(C_n)$ by \cite[Proposition 4.4]{cps}. Therefore, we may assume that $2\phi(p_1p_2\cdots p_{r-1}) < p_1p_2\cdots p_{r-1}$.
If $n=p_1p_2\cdots p_r$, then $\kappa\left(\mathcal{P}\left(C_n\right)\right)=\min\left\{\beta_r^1,|X_{r-1,r}^{1,1}|\right\}$ by \cite[Theorem 1.3]{cps-2} (see Proposition \ref{thm-X-a-b-s-t} for $|X_{r-1,r}^{1,1}|$). In this case, it follows from the proof of \cite[Theorem 1.3]{cps-2} that if $Y$ is a minimum cut-set of $\mathcal{P}(C_n)$, then $Y\in \{Z_r^1, X_{r-1,r}^{1,1}\}$. Further, by \cite[Proposition 3.9(ii)]{cps-2}, $\kappa\left(\mathcal{P}\left(C_n\right)\right)=|X_{r-1,r}^{1,1}|$ if and only if $\left(2+\frac{p_r-2}{p_{r-1}-1}\right)\cdot\phi(p_1p_2\cdots p_{r-2})\leq  p_1p_2\cdots p_{r-2}$.

For general $n$ with $r\geq 4$, it was proved in \cite[Theorem 1.1]{MPS} that if $X$ is a minimum cut-set of $\mathcal{P}(C_n)$, then $X=Z_r^1$; or $X=Z_a^{n_a}$ for some $a\in [r]$ with $n_a\geq 2$; or $X=X_{a,b}^{s,t}$ for some $a,b\in [r]$ with $a\neq b$, $s\in [n_a]$ and $t\in [n_b]$.
Indeed, when $n_r\geq 2$, we have $X=Z_r^1$ or $X=Z_a^{n_a}$ for some $a\in [r]$ with $n_a\geq 2$, giving $\kappa(\mathcal{P}(C_n))=\min\{\beta_r^{1},\beta_a^{n_a}: a\in [r], n_a\geq 2\}$ which was obtained in \cite[Theorem 1.2]{cps-2-E}).

\subsection{Main results}

In view of the results mentioned above, the problems of characterizing the minimum cut-sets and determining the vertex connectivity of $\mathcal{P}(C_n)$ are still open when $r\geq 4$, $2\phi(p_1p_2\cdots p_{r-1}) < p_1p_2\cdots p_{r-1}$ and $n_i >1$ for at least one $i\in [r]$.

When $n_r\geq 2$, we prove the following theorem in Section \ref{sec-main-1} which contributes further to the study of the minimum cut-sets and the vertex connectivity of $\mathcal{P}(C_n)$.

\begin{theorem}\label{main-1}
Let $r\geq 4$, $n_r\geq 2$, $2\phi(p_1p_2\cdots p_{r-1}) < p_1p_2\cdots p_{r-1}$ and $X$ be a minimum cut-set of $\mathcal{P}(C_n)$. Consider the subset $\Omega$ of $[r-1]$ defined by
$$\Omega:=\left\{j\in [r-1]\mid n_j\geq 3,\; p_r-p_j \leq r-4,\; 2\phi\left(\frac{p_1p_2\cdots p_{r}}{p_j}\right) < \frac{p_1p_2\cdots p_{r}}{p_j}\right\}.$$
Then the following hold:
\begin{enumerate}
\item[(i)] If $n_r\geq 3$ or $\Omega=\emptyset$, then $X=Z_r^{n_r}$ and so $\kappa\left(\mathcal{P}\left(C_n\right)\right)=\beta_r^{n_r}$;
\item[(ii)] If $n_r=2$ and $\Omega\neq\emptyset$, then $X=Z_r^{n_r}$ or $X=Z_b^{n_b}$, where $b$ is the largest integer in $\Omega$. As a consequence, we have $\kappa\left(\mathcal{P}\left(C_n\right)\right)\in \{\beta_b^{n_b},\beta_r^{n_r}\}$.
\end{enumerate}
\end{theorem}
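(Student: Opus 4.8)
The plan is to start from the known structural result (\cite[Theorem 1.1]{MPS} together with \cite[Proposition 4.6]{cps-2}) which, under the hypothesis $n_r \geq 2$, forces any minimum cut-set $X$ to be either $Z_r^1$, or $Z_a^{n_a}$ for some $a \in [r]$ with $n_a \geq 2$. Because $n_r \geq 2$, the candidate $Z_r^{n_r}$ is itself of the form $Z_a^{n_a}$, so the real task is a comparison of sizes: I would show that among all the $\beta_a^{n_a}$ (for $a$ with $n_a \geq 2$) and $\beta_r^1$, the minimum is achieved either at $a = r$ (i.e.\ by $\beta_r^{n_r}$) or, in the exceptional regime, at $a = b$ the largest index in $\Omega$. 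So the proof reduces to a sequence of inequalities among the explicit quantities $\beta_a^s$ given in the excerpt.

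First I would dispose of $Z_r^1$: I would show $\beta_r^{n_r} \leq \beta_r^1$ (in fact $\beta_r^s$ should be non-increasing in $s$, or at least $\beta_r^{n_r}$ is no larger than $\beta_r^1$), using the explicit formula for $\beta_a^s$ and the hypothesis $2\phi(p_1\cdots p_{r-1}) < p_1\cdots p_{r-1}$, which is exactly what makes raising the exponent at the top prime $p_r$ beneficial. Next, for $a \in [r-1]$ with $n_a \geq 2$, I would compare $\beta_a^{n_a}$ with $\beta_r^{n_r}$. Writing both via the common shape $\phi(n) + \frac{n}{p_1\cdots p_r}\cdot\frac{1}{p_a^{s-1}}\big[\frac{p_1\cdots p_r}{p_a} + \phi(\frac{p_1\cdots p_r}{p_a})(p_a^{s-1}-2)\big]$, the comparison becomes an inequality between two functions of $(p_a, n_a)$ and $(p_r, n_r)$. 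I expect the analysis to split according to $n_a$: if $n_a = 2$ the bracket simplifies (the $(p_a^{s-1}-2)$ term has $p_a^{s-1}-2 = p_a - 2$), and I would show $\beta_a^2 > \beta_r^{n_r}$ always; the genuinely delicate case is $n_a \geq 3$, where the $\frac{1}{p_a^{n_a-1}}$ damping factor can make $\beta_a^{n_a}$ small. Tracking when $\beta_a^{n_a} \leq \beta_r^{n_r}$ (or $\leq \beta_r^2$ when $n_r = 2$) is precisely what produces the three defining conditions of $\Omega$: $n_a \geq 3$, the arithmetic condition $2\phi(\frac{p_1\cdots p_r}{p_a}) < \frac{p_1\cdots p_r}{p_a}$, and the size condition $p_r - p_a \leq r - 4$ which bounds how much the bracket for index $a$ can exceed that for index $r$.

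Having established that $\Omega$ is exactly the set of indices $a \in [r-1]$ with $n_a \geq 2$ for which $\beta_a^{n_a}$ could possibly tie or beat the top candidate, I would then prove: (i) if $n_r \geq 3$, the extra damping $\frac{1}{p_r^{n_r-1}}$ with $n_r - 1 \geq 2$ makes $\beta_r^{n_r}$ strictly smaller than every $\beta_a^{n_a}$ with $a < r$, and also smaller than $\beta_r^1$, so $X = Z_r^{n_r}$ is forced; and if $\Omega = \emptyset$ the same conclusion holds by the very definition of $\Omega$. For (ii), when $n_r = 2$ and $\Omega \neq \emptyset$, I would show that among $\{\beta_a^{n_a} : a \in \Omega\}$ the smallest is $\beta_b^{n_b}$ with $b = \max \Omega$ — here I expect monotonicity in $p_a$: larger $p_a$ gives a smaller value, because the leading term $\frac{p_1\cdots p_r}{p_a^{n_a}}$ decreases and, for fixed $n_a$, the negative contribution relative to $\phi$ grows; a short lemma comparing two indices in $\Omega$ should suffice. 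Combining, $\kappa(\mathcal{P}(C_n)) = \min\{\beta_r^2, \beta_b^{n_b}\}$ and $X \in \{Z_r^{n_r}, Z_b^{n_b}\}$.

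The main obstacle will be the case $n_a \geq 3$ in the pairwise comparison $\beta_a^{n_a}$ versus $\beta_r^{n_r}$: the formula for $\beta_a^s$ is not monotone in the obvious way, the damping factor $p_a^{-(n_a-1)}$ interacts nontrivially with the bracket (which itself grows with $n_a$), and one must extract clean, checkable inequalities on primes — in particular isolating the threshold $p_r - p_a \leq r - 4$ — rather than getting lost in the algebra. I would handle this by first reducing to the worst case $n_a = 3$ (showing the inequality only gets easier for larger $n_a$, since more damping helps index $a$ — wait, that cuts the other way, so in fact one must check $n_a = 3$ is the only case where $a$ can win, i.e.\ larger $n_a$ makes $\beta_a^{n_a}$ smaller, hence one compares the smallest such value), then reducing to $n_r = 2$ for part (ii) and $n_r \in \{2,3\}$ generally, and finally verifying the resulting pure inequality among $p_1, \ldots, p_r$ and $r$ by an explicit estimate using $\phi(m)/m = \prod_{p \mid m}(1 - 1/p)$ and crude bounds on consecutive primes.
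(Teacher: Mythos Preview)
Your plan is correct and follows essentially the same route as the paper: reduce via the structural result to $X\in\{Z_r^1\}\cup\{Z_a^{n_a}:n_a\ge 2\}$, kill $Z_r^1$ using $\beta_r^{n_r}<\beta_r^1$ (which is exactly Lemma~\ref{comp-Z-a-s}(ii) under the hypothesis $2\phi(p_1\cdots p_{r-1})<p_1\cdots p_{r-1}$), then compare $\beta_a^{n_a}$ with $\beta_r^{n_r}$ case-by-case to force $a\in\Omega$ when $n_r=2$, and finally show the minimum over $\Omega$ occurs at $b=\max\Omega$. Two small corrections to your narrative: first, $\Omega$ is not shown to be \emph{exactly} the set where $\beta_a^{n_a}$ can beat $\beta_r^{n_r}$, only a superset of it---that is all the theorem needs, and indeed Example~\ref{example} shows both outcomes occur within $\Omega$; second, the condition $2\phi\big(\frac{p_1\cdots p_r}{p_a}\big)<\frac{p_1\cdots p_r}{p_a}$ is not something you must \emph{derive} from the comparison---it comes for free from Proposition~\ref{facts}(ii), and it is precisely what makes $\beta_a^s$ decreasing in $s$ (resolving your ``wait, that cuts the other way'' worry: larger $n_a$ does make $\beta_a^{n_a}$ smaller, so the relevant constraint is the \emph{lower} bound $n_a\ge 3$, obtained by checking that $n_a=2$ already loses to $\beta_r^{n_r}$ via Lemma~\ref{compa-2}).
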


Note that the set $\Omega$ defined in Theorem \ref{main-1} is the empty set for $r\in\{4,5\}$. Therefore, by Theorem \ref{main-1}(i), we have the following corollary.

\begin{corollary}
Let $r\in\{4,5\}$, $n_r\geq 2$, $2\phi(p_1p_2\cdots p_{r-1}) < p_1p_2\cdots p_{r-1}$ and $X$ be a minimum cut-set of $\mathcal{P}(C_n)$. Then $X=Z_r^{n_r}$ and so $\kappa(\mathcal{P}(C_n))=\beta_r^{n_r}$.
\end{corollary}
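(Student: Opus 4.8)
The plan is to derive the corollary directly from Theorem \ref{main-1}(i), so the only real work is checking that the hypothesis $\Omega=\emptyset$ holds whenever $r\in\{4,5\}$. Recall that
$$\Omega=\left\{j\in [r-1]\mid n_j\geq 3,\; p_r-p_j \leq r-4,\; 2\phi\!\left(\tfrac{p_1p_2\cdots p_{r}}{p_j}\right) < \tfrac{p_1p_2\cdots p_{r}}{p_j}\right\},$$
and I would isolate the middle condition $p_r-p_j\le r-4$, which is the one that fails cheaply for small $r$.

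First I would treat $r=4$: here $r-4=0$, so membership of $j$ in $\Omega$ would force $p_r-p_j\le 0$, i.e. $p_r\le p_j$. But $j\in[r-1]=[3]$ means $j<r$, hence $p_j<p_r$ since the primes are listed in strictly increasing order; this contradiction shows no such $j$ exists, so $\Omega=\emptyset$. Next I would treat $r=5$: here $r-4=1$, so $j\in\Omega$ forces $p_5-p_j\le 1$ with $j\in[4]$, hence $p_j<p_5$ and therefore $p_5-p_j=1$. Two distinct primes differing by $1$ must be $2$ and $3$, so $p_j=2$ and $p_5=3$; but then $p_5=3$ while $p_1<p_2<p_3<p_4<p_5$ would require at least five distinct primes all $\le 3$, which is impossible. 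Hence $\Omega=\emptyset$ in this case as well. (Equivalently, one just notes $p_5\ge 11$, so $p_5-p_j\ge p_5-p_4\ge 2>1$.)

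Having established $\Omega=\emptyset$ for $r\in\{4,5\}$, I would invoke Theorem \ref{main-1}(i): under the standing hypotheses $r\ge 4$, $n_r\ge 2$, and $2\phi(p_1p_2\cdots p_{r-1})<p_1p_2\cdots p_{r-1}$, the condition $\Omega=\emptyset$ yields $X=Z_r^{n_r}$ for any minimum cut-set $X$, and consequently $\kappa(\mathcal{P}(C_n))=\beta_r^{n_r}$. This is exactly the statement of the corollary.

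There is essentially no obstacle here: the argument is a short case check on the prime-gap inequality $p_r-p_j\le r-4$ for $r\in\{4,5\}$, after which Theorem \ref{main-1}(i) applies verbatim. The only point worth stating carefully is that $j\in[r-1]$ forces $p_j<p_r$, which together with $r-4\in\{0,1\}$ leaves no room for $j$ to satisfy all the defining conditions of $\Omega$.
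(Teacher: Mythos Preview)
Your proof is correct and takes essentially the same approach as the paper: the paper simply remarks that $\Omega=\emptyset$ for $r\in\{4,5\}$ and then applies Theorem~\ref{main-1}(i), while you supply the easy details of why the condition $p_r-p_j\le r-4$ forces $\Omega=\emptyset$ in these cases.
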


In Example \ref{example}, we provide integers $n$ with $r=6$, $n_r=2$ and $\Omega\neq\emptyset$ such that each of the possibilities for $\kappa(\mathcal{P}(C_n))$ and so for $X$ in Theorem \ref{main-1}(ii) occurs. For $r\in\{4,5\}$ with $n_r=1$, we prove the following theorem in Section \ref{sec-main-2}.

\begin{theorem}\label{main-2}
Let $r\in\{4,5\}$, $n_r=1$, $2\phi(p_1p_2\cdots p_{r-1})< p_1p_2\cdots p_{r-1}$ and $X$ be a minimum cut-set of $\mathcal{P}(C_n)$. Then $p_1\in \{2,3\}$ and the following hold:
\begin{enumerate}
\item[(i)] If $r=4$, then $X\in \{Z_3^{n_3}, Z_4^{1}\}$ and so $\kappa\left(\mathcal{P}\left(C_n\right)\right)\in \{\beta_3^{n_3},\beta_4^{1}\}$. Furthermore, if $n_{3}=1$, then $X= Z_4^{1}$ and so $\kappa\left(\mathcal{P}\left(C_n\right)\right)=\beta_4^{1}$.
\item[(ii)] If $r=5$ and $p_1 = 3$, then $X\in \{Z_4^{n_4}, Z_5^{1}\}$ and so $\kappa\left(\mathcal{P}\left(C_n\right)\right)\in \{\beta_4^{n_4},\beta_5^{1}\}$. Furthermore, if $n_{4}=1$, then $X= Z_5^{1}$ and so $\kappa\left(\mathcal{P}\left(C_n\right)\right)=\beta_5^{1}$.
\end{enumerate}
\end{theorem}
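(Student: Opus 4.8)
The plan is to begin from the structural reduction of \cite[Theorem 1.1]{MPS}: since $r\geq 4$, any minimum cut-set $X$ of $\mathcal{P}(C_n)$ equals $Z_r^1$, or $Z_a^{n_a}$ for some $a\in[r]$ with $n_a\geq 2$, or $X_{a,b}^{s,t}$ for some $a,b\in[r]$ with $a\neq b$, $s\in[n_a]$, $t\in[n_b]$. As $n_r=1$ we have $Z_r^{n_r}=Z_r^1$ and the middle family is restricted to $a\in[r-1]$. Thus Theorem \ref{main-2} reduces to proving: (a) $p_1\in\{2,3\}$; (b) no $X_{a,b}^{s,t}$ is a minimum cut-set; (c) for every $a\leq r-2$ (with $p_1=3$ assumed when $r=5$) the set $Z_a^{n_a}$ is not a minimum cut-set; and (d) if $n_{r-1}=1$ then $Z_{r-1}^1$ is not among the candidates above. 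Claim (d) follows from (b) and (c), since a candidate $Z_a^{n_a}$ requires $n_a\geq 2$, so $Z_r^1$ is then the only survivor. Claim (a) is a short estimate: the hypothesis $2\phi(p_1\cdots p_{r-1})<p_1\cdots p_{r-1}$ reads $\prod_{i=1}^{r-1}(1-\tfrac1{p_i})<\tfrac12$, and if $p_1\geq 5$ then the $r-1\geq 3$ factors, each at least $\tfrac45$, would force this product above $\tfrac12$.

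For claim (c) the key is how $\beta_a^s$ varies with $s$. Writing $P:=p_1\cdots p_r$, a direct rearrangement of the displayed formula for $\beta_a^s$ yields, for $1\leq s<n_a$,
$$\beta_a^{s+1}-\beta_a^s=\frac{n(p_a-1)}{P\,p_a^{s}}\left(2\phi\!\left(\tfrac{P}{p_a}\right)-\tfrac{P}{p_a}\right),$$
so $(\beta_a^s)_{s\in[n_a]}$ is strictly monotone, the direction depending on the sign of $2\prod_{i\in[r]\setminus\{a\}}(1-\tfrac1{p_i})-1$; in either case $\beta_a^{n_a}\geq\min\{\beta_a^1,\beta_a^{\infty}\}$, where $\beta_a^{\infty}:=\phi(n)+\tfrac nP\phi(P/p_a)=\lim_{s\to\infty}\beta_a^s$. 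Since $\kappa(\mathcal{P}(C_n))\leq\beta_r^1$ by Proposition \ref{prop-Z-a-s}, it suffices to prove $\min\{\beta_a^1,\beta_a^{\infty}\}>\beta_r^1$ for all $a\leq r-2$. After cancelling $\phi(n)$ and the common factor $\tfrac nP$, the inequality $\beta_a^1>\beta_r^1$ is equivalent to $\prod_{i\in[r]\setminus\{a,r\}}\tfrac{p_i}{p_i-1}>1$, which holds for every $a\in[r-1]$ as soon as $r\geq 4$; and $\beta_a^{\infty}>\beta_r^1$ is equivalent to an elementary inequality among $p_1,\dots,p_r$ that one verifies directly under the standing constraints $r\in\{4,5\}$, $p_1\in\{2,3\}$ and $\prod_{i=1}^{r-1}(1-\tfrac1{p_i})<\tfrac12$ (this last hypothesis sharply restricts the admissible primes, so only finitely many configurations need to be examined).

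For claim (b), I would use the cardinality formula for $X_{a,b}^{s,t}$ to be derived in Section \ref{sec-main-2} (Proposition \ref{thm-X-a-b-s-t}), writing $|X_{a,b}^{s,t}|=|H_{a,b}^{s,t}|+|K_{a,b}^{s,t}|$ with $|K_{a,b}^{s,t}|=\tfrac{n}{p_a^sp_b^t}-\phi\bigl(\tfrac{n}{p_a^sp_b^t}\bigr)$. One shows this cardinality is monotone in each of $s$ and $t$, so that its minimum over the admissible parameters is attained at an extreme choice — as in the squarefree case $n=p_1\cdots p_r$ treated in \cite{cps-2}, the decisive choice involves the two largest prime divisors — and then checks that even this minimum is strictly larger than $\beta_r^1$. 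For $n=p_1\cdots p_r$ this is exactly the assertion that the condition $\bigl(2+\tfrac{p_r-2}{p_{r-1}-1}\bigr)\phi(p_1\cdots p_{r-2})\leq p_1\cdots p_{r-2}$ of \cite[Proposition 3.9(ii)]{cps-2} fails in the situations covered by the theorem: that condition is equivalent to $2+\tfrac{p_r-2}{p_{r-1}-1}\leq\prod_{i\leq r-2}\tfrac{p_i}{p_i-1}$, whose right-hand side is at most $3$ when $r\in\{4,5\}$ and $p_1\in\{2,3\}$ (with $p_1=3$ forced when $r=5$), while its left-hand side always exceeds $3$. For general $n$ one follows the same route, carrying the exponents $n_i$ through the cardinality formula.

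The main obstacle is claim (b): pinning down the parameter choice $(a,b,s,t)$ that minimises $|X_{a,b}^{s,t}|$, and then confirming the resulting inequality against $\beta_r^1$, requires a careful case analysis according to whether $a$ or $b$ equals $r$, the ranges of $s$ and $t$, and the value of $p_1$; moreover, for $r=5$, $p_1=3$ several of these inequalities are close to equality, so the estimates must be carried out exactly rather than crudely. The comparisons needed for claim (c) are of the same flavour but, being one-parameter, considerably easier.
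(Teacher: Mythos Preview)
Your overall reduction via \cite[Theorem 1.1]{MPS} into claims (a)--(d) is correct and matches the paper's skeleton. Two small slips: in (a), the bound ``each factor at least $\tfrac45$'' yields only $(4/5)^4<\tfrac12$ when $r=5$, so you need the sharper estimate of (\ref{eqn-1-1}); and in (c), the hypothesis $\prod_{i<r}(1-1/p_i)<\tfrac12$ does \emph{not} leave only finitely many configurations when $p_1=2$ (it is then automatic), so the inequality $\phi(P/p_a)+\phi(P/p_r)>P/p_r$ behind your $\beta_a^{\infty}>\beta_r^1$ must be proved uniformly in the primes --- the paper does this by a direct factorisation for $r=4$ and, for $r=5$, via the observation (Lemma \ref{r=5}) that $p_3,p_4,p_5$ cannot be three consecutive twin primes once $p_1=3$.

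The substantive divergence is claim (b). Rather than locating the minimum of $|X_{a,b}^{s,t}|$ over all $(a,b,s,t)$ and comparing it with $\beta_r^1$ --- which, as you anticipate, entails a delicate multi-parameter analysis --- the paper exploits a much cruder lower bound: if $W(X)=\{a,b\}$ then already $|X|>|E_n|+|E_{n/p_a}|+|E_{n/p_b}|$, and it suffices that \emph{this} exceed some $\beta_t^{n_t}$. The key device (Corollary \ref{3_phi}) is that whenever $3\phi(p_1\cdots p_{r-1})\geq p_1\cdots p_{r-1}$, the crude bound with $t=r$ already suffices, ruling out $|W(X)|=2$ in one line; this disposes of $r=5$ with $p_1=3$, of $r=4$ with $p_1=3$, and of $r=4$ with $p_1=2,\ p_2\geq 5$. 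Only the residual case $r=4$, $p_1=2$, $p_2=3$ needs the exact formulae $\theta_{a,b}^{s,t}$, and even there the paper first forces $4\in W(X)$ (Lemma \ref{lem:r=4}) and then uses the monotonicity of Proposition \ref{compa-X-a-b-s-t} to reduce to two explicit comparisons. Your direct route is viable in principle, but the paper's shortcut via the $3\phi$ criterion sidesteps almost all of the case analysis you flag as the main obstacle.
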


We note that when $r=5$ with $p_1=2$, $n_{r-1}=1=n_r$ and $2\phi(p_1p_2\cdots p_{r-1})< p_1p_2\cdots p_{r-1}$, it is not necessary that $Z_5^{1}$ is a minimum cut-set of $\mathcal{P}(C_n)$ (see Example \ref{example-0}). In Examples \ref{example-1} and \ref{example-2}, we provide integers $n$ with $r\in\{4,5\}$, $n_r=1$, $n_{r-1}\geq 2$ and $2\phi(p_1p_2\cdots p_{r-1})< p_1p_2\cdots p_{r-1}$ such that $\kappa(\mathcal{P}(C_n))=\beta_{r-1}^{n_{r-1}}$ and so $X=Z_{r-1}^{n_{r-1}}$ in (i) and (ii) of Theorem \ref{main-2}.

\section{\bf Preliminaries}

Recall that $\phi$ is a multiplicative function, that is, $\phi(m_1m_2)=\phi(m_1)\phi(m_2)$ for any two positive integers $m_1,m_2$ that are relatively prime. Therefore, $\phi(n)= \phi\left(p_1^{n_1}\right)\phi\left(p_2^{n_2}\right) \cdots \phi\left(p_r^{n_r}\right)$, where $\phi\left(p_i^{n_i}\right)=p_i^{n_i-1}(p_i-1)=p_i^{n_i-1}\phi(p_i)$ for $i\in [r]$. We also know that $\underset{d\mid m}{\sum} \phi(d) = m$ for every positive integer $m$.

In this section, we consider $r\geq 2$. For $j\in [r]$, we have $p_j^{n_j -1} \cdot\phi\left(\frac{n}{p_j^{n_j}}\right)\geq \phi\left(\frac{n}{p_j}\right)$, with equality if and only if $n_j=1$. Let $i,k\in [r]$ with $i<k$. Applying a similar argument as in the proof of \cite[Lemma 2.1]{cps}, we get
\begin{equation}\label{eqn-0}
\phi\left(\frac{n}{p_i}\right)\geq p_k^{n_k -1}\cdot \phi\left(\frac{n}{p_k^{n_k}}\right),
\end{equation}
where the inequality (\ref{eqn-0}) is strict except when $(i,k)=(1,2)$, $(p_1,p_2)=(2,3)$ and $n_1\geq 2$.
As a consequence, we have
\begin{equation}\label{eqn-1}
\left|E_{\frac{n}{p_i}} \right|=\phi\left(\frac{n}{p_i}\right)\geq \phi\left(\frac{n}{p_k}\right)=\left|E_{\frac{n}{p_k}} \right|,
\end{equation}
where equality holds in (\ref{eqn-1}) if and only if $(i,k)=(1,2)$, $(p_1, p_2) =(2,3)$, $n_1\geq 2$ and $n_2 = 1$.
If $p_1\geq r$, then
\begin{equation}\label{eqn-1-1}
2 \phi(p_1p_2\cdots p_{r-1}) \geq p_1p_2\cdots p_{r-1},
\end{equation}
with equality if and only if $(r,p_1)= (2,2)$. This can be seen from the following:
\begin{equation*}
\begin{aligned}
\frac{\phi(p_1p_2\cdots p_{r-1})}{p_1p_2\cdots p_{r-1}} & = \left(1-\frac{1}{p_1}\right)\left(1-\frac{1}{p_2}\right)\cdots \left(1-\frac{1}{p_{r-1}}\right)\\
& \geq \left(1-\frac{1}{r}\right)\left(1-\frac{1}{r+1}\right)\cdots \left(1-\frac{1}{2(r-1)}\right)= \frac{r-1}{2(r-1)}=\frac{1}{2},
\end{aligned}
\end{equation*}
where the second inequality is strict except when $r=2$ and $p_1=2$.
Let $I$ be a non-empty subset of $[r]$ with $|I|=t$. By \cite[Lemma 3.1]{cps-1}, we have
\begin{equation}\label{eqn-2}
(t+1)\phi\left(\underset{i\in I}\prod p_i\right) \geq \underset{i\in I}\prod p_i,
\end{equation}
where equality holds if and only if either $(t,I)=(1,\{1\})$ with $p_1=2$ or $(t,I)=(2,\{1,2\})$ with $p_1=2$ and $p_2=3$.
In fact, if $k\cdot\phi\left(\underset{i\in I}\prod p_i\right) = \underset{i\in I}\prod p_i$ for some positive integer $k$, then either $(t,I, k)=(1,\{1\},2)$ with $p_1=2$, or $(t,I,k)=(2,\{1,2\},3)$ with $p_1=2$ and $p_2=3$.

%

\begin{lemma}\label{comparison-1}
Let $a,b\in [r]$ with $a< b$. If $\mu \phi\left(\frac{p_1p_2\cdots p_{r}}{p_a}\right) < \frac{p_1p_2\cdots p_{r}}{p_a}$ for some positive real number $\mu$, then $\mu\phi\left(\frac{p_1p_2\cdots p_{r}}{p_b}\right) < \frac{p_1p_2\cdots p_{r}}{p_b}$.
\end{lemma}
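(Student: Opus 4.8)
The plan is to compare the two ratios $\dfrac{\phi\left(\frac{p_1p_2\cdots p_r}{p_a}\right)}{\frac{p_1p_2\cdots p_r}{p_a}}$ and $\dfrac{\phi\left(\frac{p_1p_2\cdots p_r}{p_b}\right)}{\frac{p_1p_2\cdots p_r}{p_b}}$ directly. Writing $m_a:=\frac{p_1p_2\cdots p_r}{p_a}$ and $m_b:=\frac{p_1p_2\cdots p_r}{p_b}$, both integers are squarefree products of the same set of primes except that $m_a$ has the factor $p_b$ in place of $p_a$ appearing in $m_b$ (all the other prime factors $p_i$, $i\neq a,b$, are common). Since $\phi$ is multiplicative and $\phi(p)/p = 1 - \frac{1}{p}$ for a prime $p$, the common prime factors cancel in the ratio, and we are left with
$$\frac{\phi(m_a)/m_a}{\phi(m_b)/m_b} = \frac{1-\frac{1}{p_b}}{1-\frac{1}{p_a}}.$$

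First I would record that $a<b$ forces $p_a<p_b$, hence $1-\frac{1}{p_a}<1-\frac{1}{p_b}$, so the displayed ratio is strictly greater than $1$; equivalently $\dfrac{\phi(m_b)}{m_b}<\dfrac{\phi(m_a)}{m_a}$. Then, starting from the hypothesis $\mu\,\phi(m_a)<m_a$, i.e.\ $\mu<\dfrac{m_a}{\phi(m_a)}$, I would chain the inequality $\dfrac{m_a}{\phi(m_a)}<\dfrac{m_b}{\phi(m_b)}$ to conclude $\mu<\dfrac{m_b}{\phi(m_b)}$, which is exactly $\mu\,\phi(m_b)<m_b$. The only mild point of care is that $\phi(m_a),\phi(m_b)>0$ and $\mu>0$, so all the divisions and the direction of the inequalities are legitimate; this is immediate since $n\geq 2$ and each $p_i\geq 2$.

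I do not anticipate a genuine obstacle here: the statement is essentially the observation that, for squarefree numbers built from the same primes, replacing a smaller prime factor by a larger one increases the ratio $\phi(\cdot)/(\cdot)$, so the one-sided inequality $\mu\phi(\cdot)<(\cdot)$ is inherited when we pass from $p_a$ to the larger prime $p_b$. The ``hardest'' part is simply being careful that $a<b$ in the indexing convention $p_1<p_2<\cdots<p_r$ indeed gives $p_a<p_b$, and that the cancellation of common Euler factors is valid — both of which are routine. An alternative phrasing, if one prefers to avoid fractions, is to multiply through: from $\mu\phi(m_a)<m_a$ multiply both sides by $\phi(m_b)>0$ and use $\phi(m_a)\phi(m_b)=\phi\!\left(\frac{p_1\cdots p_r}{p_ap_b}\right)^{\!2}\cdot\phi(p_a)\phi(p_b)$-type identities together with $m_a\phi(m_b)<m_b\phi(m_a)$ (which is the same ratio comparison rewritten) to land on $\mu\phi(m_b)<m_b$; I would present whichever version reads more cleanly.
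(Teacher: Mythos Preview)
Your proof is correct and is essentially the same as the paper's: both reduce, via multiplicativity of $\phi$, to the single-prime comparison $\frac{\phi(p_a)}{p_a}<\frac{\phi(p_b)}{p_b}$ (equivalently the paper's $\frac{\phi(p_a)}{\phi(p_b)}<\frac{p_a}{p_b}$), which follows from $p_a<p_b$. The paper just writes it in one line as $\mu\phi(m_b)=\mu\phi(m_a)\cdot\frac{\phi(p_a)}{\phi(p_b)}<m_a\cdot\frac{p_a}{p_b}=m_b$, but the content is identical to your ratio argument.
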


\begin{proof}
Since $a< b$, we have $p_a <p_b$ and so $\frac{\phi(p_a)}{\phi(p_b)} < \frac{p_a}{p_b}$. Using $\mu \phi\left(\frac{p_1p_2\cdots p_{r}}{p_a}\right) < \frac{p_1p_2\cdots p_{r}}{p_a}$, we get
$\mu\phi\left(\frac{p_1p_2\cdots p_{r}}{p_b}\right) =\mu\phi\left(\frac{p_1p_2\cdots p_{r}}{p_a}\right)\cdot \frac{\phi(p_a)}{\phi(p_b)} < \frac{p_1p_2\cdots p_{r}}{p_a}\cdot \frac{p_a}{p_b}=\frac{p_1p_2\cdots p_{r}}{p_b}$.
\end{proof}

\begin{lemma}\label{comparison-2}
Let $a,b\in [r]$ with $a < b$. If $\mu\phi\left(\frac{p_1p_2\cdots p_{r}}{p_b}\right) > \frac{p_1p_2\cdots p_{r}}{p_b}$ for some positive real number $\mu$, then $\mu\phi\left(\frac{p_1p_2\cdots p_{r}}{p_a}\right) > \frac{p_1p_2\cdots p_{r}}{p_a}$.
\end{lemma}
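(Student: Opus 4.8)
The statement to prove is Lemma~\ref{comparison-2}: if $a,b\in[r]$ with $a<b$ and $\mu\phi\left(\frac{p_1p_2\cdots p_r}{p_b}\right) > \frac{p_1p_2\cdots p_r}{p_b}$ for some positive real $\mu$, then $\mu\phi\left(\frac{p_1p_2\cdots p_r}{p_a}\right) > \frac{p_1p_2\cdots p_r}{p_a}$.

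This is the exact contrapositive-flavoured companion to Lemma~\ref{comparison-1}, and the plan is to mimic that proof almost verbatim, just reversing the direction of the inequality. The key algebraic observation is the same: since $a<b$ we have $p_a<p_b$, hence $\frac{\phi(p_a)}{p_a} > \frac{\phi(p_b)}{p_b}$ because $t\mapsto \frac{t-1}{t}=1-\frac1t$ is strictly increasing for primes $t\ge 2$; equivalently $\frac{\phi(p_b)}{\phi(p_a)} < \frac{p_b}{p_a}$, so $\frac{\phi(p_a)}{\phi(p_b)} > \frac{p_a}{p_b}$. Note $\phi\left(\frac{p_1p_2\cdots p_r}{p_a}\right) = \phi\left(\frac{p_1p_2\cdots p_r}{p_b}\right)\cdot\frac{\phi(p_b)}{\phi(p_a)}$ since the two products differ only in one prime factor ($p_b$ versus $p_a$), and $\phi$ is multiplicative on the coprime factorization $\frac{p_1\cdots p_r}{p_a p_b}\cdot p_b$ respectively $\frac{p_1\cdots p_r}{p_a p_b}\cdot p_a$.

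The chain of inequalities is then immediate: starting from the hypothesis $\mu\phi\left(\frac{p_1p_2\cdots p_r}{p_b}\right) > \frac{p_1p_2\cdots p_r}{p_b}$, multiply both sides by the positive quantity $\frac{\phi(p_a)}{\phi(p_b)}$ to get $\mu\phi\left(\frac{p_1p_2\cdots p_r}{p_a}\right) = \mu\phi\left(\frac{p_1p_2\cdots p_r}{p_b}\right)\cdot\frac{\phi(p_a)}{\phi(p_b)} > \frac{p_1p_2\cdots p_r}{p_b}\cdot\frac{\phi(p_a)}{\phi(p_b)}$, and then bound the right-hand side below using $\frac{\phi(p_a)}{\phi(p_b)} > \frac{p_a}{p_b}$, which gives $\frac{p_1p_2\cdots p_r}{p_b}\cdot\frac{\phi(p_a)}{\phi(p_b)} > \frac{p_1p_2\cdots p_r}{p_b}\cdot\frac{p_a}{p_b}\cdot\frac{p_b}{p_a} $—wait, I should be careful with the direction here. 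Actually the cleaner route is: $\frac{p_1p_2\cdots p_r}{p_b}\cdot\frac{\phi(p_a)}{\phi(p_b)} > \frac{p_1p_2\cdots p_r}{p_b}\cdot\frac{p_a}{p_b}$ is the wrong direction of scaling; instead I write $\frac{p_1p_2\cdots p_r}{p_b}\cdot\frac{\phi(p_a)}{\phi(p_b)} \ge \frac{p_1p_2\cdots p_r}{p_b}\cdot\frac{p_a}{p_b}$ would also be wrong. The correct manipulation: I want to show the RHS is $\ge \frac{p_1p_2\cdots p_r}{p_a}$. We have $\frac{p_1p_2\cdots p_r}{p_b}\cdot\frac{\phi(p_a)}{\phi(p_b)} \ge \frac{p_1p_2\cdots p_r}{p_b}\cdot\frac{p_a}{p_b}$? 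No—$\frac{\phi(p_a)}{\phi(p_b)} > \frac{p_a}{p_b}$ is the useful inequality but it points the RHS the wrong way relative to $\frac{p_1\cdots p_r}{p_a}$. The genuinely correct step, parallel to Lemma~\ref{comparison-1}, is to instead use $\frac{p_b}{p_a} > \frac{\phi(p_b)}{\phi(p_a)}$ rewritten as $\frac{1}{\phi(p_b)} > \frac{p_b}{p_a\phi(p_a)}\cdot\frac{1}{p_b/p_a}$... this is getting muddled; the clean version is simply: from $\frac{\phi(p_a)}{\phi(p_b)} > \frac{p_a}{p_b}$ we get $\frac{\phi(p_a)}{\phi(p_b)}\cdot\frac{p_1\cdots p_r}{p_b} > \frac{p_a}{p_b}\cdot\frac{p_1\cdots p_r}{p_b}$, which is NOT $\frac{p_1\cdots p_r}{p_a}$. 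So the right approach is to write everything in terms of dividing the hypothesis: $\mu\phi\left(\frac{p_1\cdots p_r}{p_a}\right) = \mu\phi\left(\frac{p_1\cdots p_r}{p_b}\right)\cdot\frac{\phi(p_a)}{\phi(p_b)}$, and since $\frac{\phi(p_a)}{\phi(p_b)} > \frac{p_a}{p_b}$ and $\mu\phi\left(\frac{p_1\cdots p_r}{p_b}\right) > \frac{p_1\cdots p_r}{p_b} > 0$, I should instead compare against $\frac{p_1\cdots p_r}{p_b}\cdot\frac{p_b}{p_a} = \frac{p_1\cdots p_r}{p_a}$, which requires $\frac{\phi(p_a)}{\phi(p_b)} \ge \frac{p_b}{p_a}$ — but that's false. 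So actually the hypothesis direction forces me to use it differently than Lemma~\ref{comparison-1}: I think the cleanest is to divide the hypothesis by $\phi(p_b)$ and multiply by $\phi(p_a)$ carefully, or better, note $\frac{p_1\cdots p_r/p_a}{\phi(p_1\cdots p_r/p_a)} = \frac{p_1\cdots p_r/p_b}{\phi(p_1\cdots p_r/p_b)}\cdot\frac{p_a/\phi(p_a)}{p_b/\phi(p_b)}$ and since $\frac{p_a}{\phi(p_a)} < \frac{p_b}{\phi(p_b)}$ (as $1-\frac1{p_a} > 1-\frac1{p_b}$ means $\frac{\phi(p_a)}{p_a} > \frac{\phi(p_b)}{p_b}$, i.e. $\frac{p_a}{\phi(p_a)} < \frac{p_b}{\phi(p_b)}$), the ratio $\frac{p_1\cdots p_r/p_a}{\phi(p_1\cdots p_r/p_a)} < \frac{p_1\cdots p_r/p_b}{\phi(p_1\cdots p_r/p_b)} < \mu$, which is exactly the claim rearranged. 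That is the approach I will take.

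There is no real obstacle here; the only subtlety worth spelling out is choosing the form of the monotonicity fact ($\frac{p_a}{\phi(p_a)} < \frac{p_b}{\phi(p_b)}$ rather than $\frac{\phi(p_a)}{\phi(p_b)} < \frac{p_a}{p_b}$) so that the chain of strict inequalities closes in the correct direction, and confirming the multiplicativity identity that lets one pass between the two products by swapping a single prime factor. I expect the proof to be three or four lines.

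\begin{proof}
Since $a<b$, we have $p_a<p_b$, hence $1-\frac{1}{p_a} > 1-\frac{1}{p_b}$, that is, $\frac{\phi(p_a)}{p_a} > \frac{\phi(p_b)}{p_b}$, which gives $\frac{p_a}{\phi(p_a)} < \frac{p_b}{\phi(p_b)}$. The integers $\frac{p_1p_2\cdots p_r}{p_a}$ and $\frac{p_1p_2\cdots p_r}{p_b}$ differ only in one prime factor, so by multiplicativity of $\phi$,
$$\frac{\frac{p_1p_2\cdots p_r}{p_a}}{\phi\left(\frac{p_1p_2\cdots p_r}{p_a}\right)} = \frac{\frac{p_1p_2\cdots p_r}{p_b}}{\phi\left(\frac{p_1p_2\cdots p_r}{p_b}\right)}\cdot \frac{p_a/\phi(p_a)}{p_b/\phi(p_b)} < \frac{\frac{p_1p_2\cdots p_r}{p_b}}{\phi\left(\frac{p_1p_2\cdots p_r}{p_b}\right)} < \mu,$$
where the last inequality is the hypothesis $\mu\phi\left(\frac{p_1p_2\cdots p_r}{p_b}\right) > \frac{p_1p_2\cdots p_r}{p_b}$. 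Therefore $\mu\phi\left(\frac{p_1p_2\cdots p_r}{p_a}\right) > \frac{p_1p_2\cdots p_r}{p_a}$.
\end{proof}
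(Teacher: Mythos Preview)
Your overall strategy is sound and matches the paper's, but the written proof contains two sign errors that happen to cancel. First, from $p_a<p_b$ you deduce $1-\frac{1}{p_a} > 1-\frac{1}{p_b}$; in fact $\frac{1}{p_a}>\frac{1}{p_b}$, so $1-\frac{1}{p_a} < 1-\frac{1}{p_b}$ and hence $\frac{p_a}{\phi(p_a)} > \frac{p_b}{\phi(p_b)}$, the reverse of what you state. Second, your multiplicative identity is inverted: writing $M=\frac{p_1\cdots p_r}{p_ap_b}$, one has $\frac{p_1\cdots p_r}{p_a}=Mp_b$ and $\frac{p_1\cdots p_r}{p_b}=Mp_a$, so
\[
\frac{\frac{p_1\cdots p_r}{p_a}}{\phi\!\left(\frac{p_1\cdots p_r}{p_a}\right)}
=\frac{Mp_b}{\phi(M)\phi(p_b)}
=\frac{Mp_a}{\phi(M)\phi(p_a)}\cdot\frac{p_b/\phi(p_b)}{p_a/\phi(p_a)}
=\frac{\frac{p_1\cdots p_r}{p_b}}{\phi\!\left(\frac{p_1\cdots p_r}{p_b}\right)}\cdot\frac{p_b/\phi(p_b)}{p_a/\phi(p_a)},
\]
not the factor $\dfrac{p_a/\phi(p_a)}{p_b/\phi(p_b)}$ you wrote. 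With the correct inequality $\frac{p_a}{\phi(p_a)} > \frac{p_b}{\phi(p_b)}$ the correct factor is indeed $<1$, and your chain $\frac{(p_1\cdots p_r)/p_a}{\phi((p_1\cdots p_r)/p_a)} < \frac{(p_1\cdots p_r)/p_b}{\phi((p_1\cdots p_r)/p_b)} < \mu$ is valid; but as written, each intermediate step is false.

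Once these are fixed, your argument is essentially the paper's proof in reciprocal form: the paper uses $\frac{\phi(p_b)}{\phi(p_a)}>\frac{p_b}{p_a}$ directly to get $\mu\phi\!\left(\frac{p_1\cdots p_r}{p_a}\right)=\mu\phi\!\left(\frac{p_1\cdots p_r}{p_b}\right)\cdot\frac{\phi(p_b)}{\phi(p_a)} > \frac{p_1\cdots p_r}{p_b}\cdot\frac{p_b}{p_a}=\frac{p_1\cdots p_r}{p_a}$.
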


\begin{proof}
Since $a < b$, we have $p_a < p_b$ and so $\frac{\phi(p_b)}{\phi(p_a)} > \frac{p_b}{p_a}$. Using $\mu\phi\left(\frac{p_1p_2\cdots p_{r}}{p_b}\right) > \frac{p_1p_2\cdots p_{r}}{p_b}$, we get
$\mu\phi\left(\frac{p_1p_2\cdots p_{r}}{p_a}\right) =\mu\phi\left(\frac{p_1p_2\cdots p_{r}}{p_b}\right)\cdot \frac{\phi(p_b)}{\phi(p_a)} > \frac{p_1p_2\cdots p_{r}}{p_b}\cdot \frac{p_b}{p_a}=\frac{p_1p_2\cdots p_{r}}{p_a}$.
\end{proof}

The following result is in the spirit of the inequality (\ref{eqn-1-1}) and an improvement of (\ref{eqn-2}) under additional condition on the smallest prime $p_1$.

\begin{lemma}\label{p1_lower_bound}
If $p_1\geq \frac{r+2}{2}$, then $3\phi(p_1p_2\cdots p_r)\geq p_1p_2\cdots p_r$ and equality holds if and only if $(r,p_1,p_2)=(2,2,3)$.
\end{lemma}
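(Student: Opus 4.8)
The plan is to translate the claim into a lower bound on a product. By multiplicativity of $\phi$ we have $\phi(p_1p_2\cdots p_r)=(p_1p_2\cdots p_r)\prod_{i=1}^r\left(1-\frac1{p_i}\right)$, so the asserted inequality $3\phi(p_1p_2\cdots p_r)\geq p_1p_2\cdots p_r$ is equivalent to $\prod_{i=1}^r\left(1-\frac1{p_i}\right)\geq\frac13$, and the stated equality case corresponds exactly to equality here. Thus everything reduces to estimating $\prod_{i=1}^r\left(1-\frac1{p_i}\right)$ from below under the hypothesis $p_1\geq\frac{r+2}{2}$.

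First I would observe that $p_1<p_2<\cdots<p_r$ are distinct positive integers, hence $p_i\geq p_1+(i-1)$ for each $i\in[r]$. Since $t\mapsto 1-\frac1t$ is increasing for $t>0$, a telescoping comparison in the spirit of the derivation of (\ref{eqn-1-1}) gives
$$\prod_{i=1}^r\left(1-\frac1{p_i}\right)\ \geq\ \prod_{i=1}^r\left(1-\frac1{p_1+i-1}\right)\ =\ \prod_{k=p_1}^{p_1+r-1}\frac{k-1}{k}\ =\ \frac{p_1-1}{p_1+r-1}.$$
Next I would note that, $p_1$ being an integer, the hypothesis $p_1\geq\frac{r+2}{2}$ is equivalent to $2p_1\geq r+2$, that is, to $p_1+r-1\leq 3(p_1-1)$; hence $\frac{p_1-1}{p_1+r-1}\geq\frac13$. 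Combining the two displays yields $\prod_{i=1}^r\left(1-\frac1{p_i}\right)\geq\frac13$, which is the claimed inequality.

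For the characterization of equality I would trace equality back through the two steps, using that all factors are positive. Equality in the product comparison forces $p_i=p_1+i-1$ for every $i$, that is, the primes $p_1,\dots,p_r$ are consecutive integers; since $r\geq 2$, one of $p_1,p_2$ is even and hence equals $2$, which forces $p_1=2$, $p_2=3$, and then $r=2$ (otherwise $p_3=4$ is not prime). Equality $\frac{p_1-1}{p_1+r-1}=\frac13$ forces $r=2p_1-2$, which is consistent with $p_1=2$, $r=2$. Conversely, $(r,p_1,p_2)=(2,2,3)$ gives $3\phi(6)=6=2\cdot 3$. Hence equality holds precisely when $(r,p_1,p_2)=(2,2,3)$.

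The only point needing a little care --- it is not really an obstacle --- is when $r$ is odd, so that $\frac{r+2}{2}$ is not an integer: then $p_1\geq\frac{r+2}{2}$ together with $p_1\in\mathbb{Z}$ in fact gives $p_1\geq\frac{r+3}{2}$, and in every case $2p_1\geq r+2$, which is exactly the inequality the telescoping estimate consumes. No other step requires more than the elementary monotonicity and telescoping used above, so I expect the whole argument to be short.
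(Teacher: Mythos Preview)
Your proof is correct and follows essentially the same route as the paper: both use $p_i\geq p_1+i-1$, telescope the product $\prod_i\frac{p_i-1}{p_i}$ to a single ratio, and then invoke the hypothesis $2p_1\geq r+2$ to conclude that ratio is at least $\frac13$ (the paper merely merges your two steps by substituting $p_1\geq\frac{r+2}{2}$ first, writing $p_k\geq\frac{r+2k}{2}$). Your remark about integrality in the last paragraph is unnecessary, since $p_1\geq\frac{r+2}{2}\iff 2p_1\geq r+2$ holds without any integrality assumption, but this does not affect correctness.
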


\begin{proof}
Since $p_1\geq \frac{r+2}{2}$, we get $p_k\geq \frac{r+2}{2} + k-1 = \frac{r+2k}{2}$ for every $k\in [r]$ and the inequality is strict for $k\geq 3$. Then
\begin{align*}
\frac{\phi(p_1p_2\cdots p_r)}{p_1p_2\cdots p_r} & =\left(1-\frac{1}{p_1}\right)\left(1-\frac{1}{p_2}\right)\cdots \left(1-\frac{1}{p_r}\right)\\
 & \geq\left(1-\frac{2}{r+2}\right)\left(1-\frac{2}{r+4}\right)\cdots \left(1-\frac{2}{3r}\right)=\frac{r}{3r}=\frac{1}{3}.
\end{align*}
The second inequality in the above is strict when $r\geq 3$. For $r=2$, it follows from the equality case of (\ref{eqn-2}) that $3\phi(p_1p_2)=p_1p_2$ if and only if $(p_1,p_2)=(2,3)$.
\end{proof}

The following result was proved in \cite[Lemma 2.1]{MPS}.

\begin{lemma}[\cite{MPS}]\label{lem-2.1}
Let $a\in [r]$ and $0\leq k\leq s\leq n_a$. Then
$$
\underset{l=k}{\overset{s}{\sum}} \left\vert E_{\frac{n}{p_a^{l}}} \right\vert =\underset{l=k}{\overset{s}{\sum}} \phi\left( \frac{n}{p_a^{l}}\right)=
\begin{cases}
\frac{n}{p_1p_2\cdots p_r}\cdot \frac{1}{p_a^{k-1}}\cdot \phi\left(\frac{p_1p_2\cdots p_r}{p_a} \right), &\text{if $s=n_a$}\\
\frac{n}{p_1p_2\cdots p_r}\cdot \frac{1}{p_a^{k-1}}\cdot\phi\left(\frac{p_1p_2\cdots p_r}{p_a} \right)\left(1-\frac{1}{p_a^{s-k+1}}\right), &\text{if $s<n_a$}.
\end{cases}
$$
\end{lemma}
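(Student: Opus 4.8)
The plan is to reduce the sum to a sum of prime-power totients by exploiting the multiplicativity of $\phi$. Since $n/p_a^l$ is a divisor of $n$ for $0\le l\le n_a$ and $E_d$ is the set of generators of the unique subgroup of $C_n$ of order $d$, we already have $|E_{n/p_a^l}| = \phi(n/p_a^l)$, so the first equality in the statement is immediate and it remains to evaluate $\sum_{l=k}^{s}\phi(n/p_a^l)$. Writing $n/p_a^l = (n/p_a^{n_a})\cdot p_a^{n_a-l}$ and using $\gcd(n/p_a^{n_a},p_a)=1$, each summand factors as $\phi(n/p_a^l) = \phi(n/p_a^{n_a})\cdot\phi(p_a^{n_a-l})$, so the sum equals $\phi(n/p_a^{n_a})\cdot\sum_{l=k}^{s}\phi(p_a^{n_a-l})$.

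Next, I would substitute $j = n_a - l$, turning the inner sum into $\sum_{j=n_a-s}^{n_a-k}\phi(p_a^j)$, and invoke the identity $\sum_{d\mid m}\phi(d)=m$ with $m=p_a^{N}$, which gives $\sum_{j=0}^{N}\phi(p_a^j)=p_a^N$. When $s=n_a$ the lower index is $j=0$, so the inner sum is $p_a^{n_a-k}$; when $s<n_a$ the lower index is $j=n_a-s\ge 1$, so the inner sum is $p_a^{n_a-k}-p_a^{n_a-s-1} = p_a^{n_a-k}\bigl(1-p_a^{-(s-k+1)}\bigr)$. This is the only point at which the case distinction enters, and it is also where one must note that in the case $s<n_a$ every exponent $n_a-l$ with $k\le l\le s$ is at least $1$, so each $\phi(p_a^{n_a-l})$ is a genuine prime-power totient rather than $\phi(1)$.

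Finally, I would rewrite $\phi(n/p_a^{n_a})$ in the prescribed normalized form: using $\phi(p_i^{n_i}) = p_i^{n_i-1}\phi(p_i)$ for each $i\neq a$ and $\prod_{i\neq a}p_i = \frac{p_1p_2\cdots p_r}{p_a}$, one gets $\phi(n/p_a^{n_a}) = \bigl(\prod_{i\neq a}p_i^{n_i-1}\bigr)\,\phi\!\left(\tfrac{p_1p_2\cdots p_r}{p_a}\right) = \frac{n\,p_a}{p_a^{n_a}\,p_1p_2\cdots p_r}\,\phi\!\left(\tfrac{p_1p_2\cdots p_r}{p_a}\right)$. Multiplying this by $p_a^{n_a-k}$ (respectively by $p_a^{n_a-k}\bigl(1-p_a^{-(s-k+1)}\bigr)$) and collapsing the powers of $p_a$ yields $\frac{n}{p_1p_2\cdots p_r}\cdot\frac{1}{p_a^{k-1}}\cdot\phi\!\left(\tfrac{p_1p_2\cdots p_r}{p_a}\right)$ times $1$ (respectively times $1-\frac{1}{p_a^{s-k+1}}$), which is exactly the claimed formula. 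I do not expect any genuine obstacle here; the only care needed is the re-indexing $j=n_a-l$ of the inner sum and tracking the exponent of $p_a$ when passing to the normalized expression.
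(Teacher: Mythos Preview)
Your proof is correct. The paper itself does not supply a proof of this lemma (it is quoted from \cite{MPS}), but your argument---factor out $\phi(n/p_a^{n_a})$ by multiplicativity, re-index the remaining sum as $\sum_{j=n_a-s}^{n_a-k}\phi(p_a^{j})$, and evaluate it via $\sum_{j=0}^{N}\phi(p_a^{j})=p_a^{N}$---is exactly the template the paper uses in its own proof of the two-index analogue Lemma~\ref{lem-2.3}, so it is the expected approach.
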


\begin{lemma}\label{lem-2.3}
Let $a,b\in [r]$ with $a\neq b$, $s\in [n_a]$ and $t\in [n_b]$. Then
$$
\underset{k=1}{\overset{s}{\sum}} \; \underset{l=1}{\overset{t}{\sum}} \left\vert E_{\frac{n}{p_a^{k}p_b^{l}}} \right\vert
=\left\{\begin{array}{ll}
    \frac{n}{p_1p_2\cdots p_r}\cdot\phi\left(\frac{p_1p_2\cdots p_r}{p_ap_b}\right), & \text{if } s=n_a,\; t=n_b \\
    \frac{n}{p_1p_2\cdots p_r}\cdot\phi\left(\frac{p_1p_2\cdots p_r}{p_ap_b}\right)\left(1-\frac{1}{p_b^{t}}\right), & \text{if } s=n_a,\; t<n_b \\
    \frac{n}{p_1p_2\cdots p_r}\cdot\phi\left(\frac{p_1p_2\cdots p_r}{p_ap_b}\right)\left(1-\frac{1}{p_a^{s}}\right), & \text{if } s<n_a,\; t=n_b \\
    \frac{n}{p_1p_2\cdots p_r}\cdot\phi\left(\frac{p_1p_2\cdots p_r}{p_ap_b}\right)\left(1-\frac{1}{p_a^{s}}\right)\left(1-\frac{1}{p_b^{t}}\right), & \text{if } s<n_a,\; t<n_b.
\end{array}
\right.
$$
\end{lemma}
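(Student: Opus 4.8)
\textbf{Proof proposal for Lemma \ref{lem-2.3}.}

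The plan is to reduce the double sum to an iterated application of Lemma \ref{lem-2.1}. First I would fix $a,b$ and rewrite the inner sum over $l$ for a fixed value of $k$: for each $k\in[s]$, the quantity $\sum_{l=1}^{t}\phi\!\left(\frac{n}{p_a^{k}p_b^{l}}\right)$ is exactly a sum of the type treated in Lemma \ref{lem-2.1}, but applied to the integer $\frac{n}{p_a^{k}}$ in place of $n$, with the prime $p_b$ playing the role of $p_a$ there, summation index running from $1$ to $t$, and $n_b$ being the exponent of $p_b$ in $\frac{n}{p_a^{k}}$ (unchanged since $a\neq b$). Note the slight bookkeeping point that Lemma \ref{lem-2.1} is stated with the lower summation index $k$ allowed to be $0$ and the multiplicative factor $\frac{1}{p_a^{k-1}}$; here we want lower index $1$, so the relevant factor is $\frac{1}{p_b^{0}}=1$, and the two cases ($t=n_b$ versus $t<n_b$) give respectively $\frac{(n/p_a^{k})}{p_1\cdots p_r}\cdot p_a\cdot\phi\!\left(\frac{p_1\cdots p_r}{p_b}\right)$ — wait, one must be careful that applying Lemma \ref{lem-2.1} to $m=\frac{n}{p_a^k}$ replaces $\frac{n}{p_1\cdots p_r}$ by $\frac{m}{p_1\cdots p_r}=\frac{n}{p_a^{k}p_1\cdots p_r}$ and replaces $\phi\!\left(\frac{p_1\cdots p_r}{p_a}\right)$ by $\phi\!\left(\frac{p_1\cdots p_r}{p_b}\right)$. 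So the inner sum equals $\frac{n}{p_a^{k}p_1\cdots p_r}\cdot\phi\!\left(\frac{p_1\cdots p_r}{p_b}\right)$ when $t=n_b$, and the same times $\left(1-\frac{1}{p_b^{t}}\right)$ when $t<n_b$.

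Next I would substitute this back and sum over $k$ from $1$ to $s$. The only $k$-dependence left is the factor $\frac{1}{p_a^{k}}$, so I pull out all the $k$-free constants and am left with $\sum_{k=1}^{s}\frac{1}{p_a^{k}}$. This is again recognizable as (a rescaling of) the sum $\sum_{k=1}^{s}\phi\!\left(\frac{m'}{p_a^{k}}\right)$ for suitable $m'$, or one can simply use the finite geometric series: $\sum_{k=1}^{s}\frac{1}{p_a^{k}}=\frac{1}{p_a-1}\left(1-\frac{1}{p_a^{s}}\right)=\frac{\phi(p_a)^{-1}\,p_a\cdot\phi(p_a)}{\cdots}$ — more cleanly, $\sum_{k=1}^{s}\frac{1}{p_a^{k}}=\frac{1}{p_a-1}$ when $s=n_a$ only in the limiting sense, so in fact I should instead mirror the structure of Lemma \ref{lem-2.1} exactly: write $\frac{n}{p_a^{k}p_1\cdots p_r}=\frac{n}{p_1\cdots p_r}\cdot\frac{1}{p_a}\cdot\frac{1}{p_a^{k-1}}$ and recognize $\sum_{k=1}^{s}\phi\!\left(\frac{n/p_b}{p_a^{k}}\right)$ has the closed form from Lemma \ref{lem-2.1} with lower index $1$: it equals $\frac{n/p_b}{p_1\cdots p_r}\cdot\phi\!\left(\frac{p_1\cdots p_r}{p_a}\right)$ if $s=n_a$ and the same times $\left(1-\frac{1}{p_a^{s}}\right)$ if $s<n_a$. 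Combining the two applications of Lemma \ref{lem-2.1} — one peeling off the $p_b$-sum, one peeling off the $p_a$-sum — and using $\phi\!\left(\frac{p_1\cdots p_r}{p_b}\right)\cdot\frac{\phi(p_1\cdots p_r/p_a)}{\phi(p_1\cdots p_r/p_b)}=\cdots$, or more transparently just noting $\phi$ is multiplicative so that the constant that survives is $\frac{n}{p_1\cdots p_r}\cdot\phi\!\left(\frac{p_1\cdots p_r}{p_ap_b}\right)$, yields the four claimed cases according to whether each of $s,t$ equals or is strictly less than $n_a,n_b$ respectively.

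The main obstacle is purely organizational rather than conceptual: keeping the four case distinctions aligned across the two nested applications of Lemma \ref{lem-2.1}, and verifying that the ``$n/(p_1p_2\cdots p_r)$'' prefactor and the ``$\phi(p_1p_2\cdots p_r/(p_ap_b))$'' factor assemble correctly after the two telescoping/geometric reductions — in particular checking that the extra factor $p_a$ (resp. $p_b$) from $\phi(p_a)\mid$-denominators cancels the $1/p_a$ (resp. $1/p_b$) coming from the $\frac{1}{p_a^{k-1}}$-type prefactor, and that no stray factor of $p_a^{n_a}$ or $p_b^{n_b}$ is left over. Once the first application of Lemma \ref{lem-2.1} is written down for the inner sum with the correct substitution $n\mapsto n/p_a^{k}$, the second application is immediate and the proof is a two-line computation in each of the four cases. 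I would present it by doing the generic case $s<n_a$, $t<n_b$ in full and remarking that the other three cases are obtained by dropping the corresponding $\left(1-\frac{1}{p_a^{s}}\right)$ and/or $\left(1-\frac{1}{p_b^{t}}\right)$ factor.
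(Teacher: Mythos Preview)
Your iterative strategy---evaluate the inner sum over $l$ for each fixed $k$ via Lemma~\ref{lem-2.1}, then sum over $k$---is sound for the generic case $s<n_a$, $t<n_b$, but it breaks at the boundary. When $s=n_a$ the outer sum includes the term $k=n_a$, and then $m:=n/p_a^{n_a}$ has only $r-1$ distinct prime divisors; applying Lemma~\ref{lem-2.1} to $m$ (with $t=n_b$, say) gives
\[
\sum_{l=1}^{n_b}\phi\!\left(\frac{m}{p_b^{l}}\right)=\frac{m}{\prod_{i\neq a}p_i}\cdot\phi\!\left(\frac{\prod_{i\neq a}p_i}{p_b}\right)=\frac{n}{p_a^{n_a-1}\,p_1\cdots p_r}\cdot\phi\!\left(\frac{p_1\cdots p_r}{p_ap_b}\right),
\]
not $\dfrac{n}{p_a^{n_a}\,p_1\cdots p_r}\cdot\phi\!\left(\dfrac{p_1\cdots p_r}{p_b}\right)$ as you wrote uniformly in $k$; the two differ by a factor $p_a/(p_a-1)$. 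Consequently, following your own recipe for $s=n_a$ (pull out constants, sum the geometric series $\sum_{k=1}^{n_a}p_a^{-k}=\frac{1}{p_a-1}(1-p_a^{-n_a})$) yields $\frac{n}{p_1\cdots p_r}\,\phi\!\left(\frac{p_1\cdots p_r}{p_ap_b}\right)(1-p_a^{-n_a})$ instead of the claimed value. Your aside that ``$\sum_{k=1}^{s}p_a^{-k}=\frac{1}{p_a-1}$ only in the limiting sense'' is precisely the symptom of this gap, and the ``drop the corresponding factor'' shortcut for the three non-generic cases is therefore not justified by your argument.

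The paper avoids this boundary bookkeeping by factoring through multiplicativity at the outset: writing $\frac{n}{p_a^{k}p_b^{l}}=\frac{n}{p_a^{n_a}p_b^{n_b}}\cdot p_a^{n_a-k}\cdot p_b^{n_b-l}$ with pairwise coprime factors gives $\phi\!\left(\frac{n}{p_a^{k}p_b^{l}}\right)=\phi\!\left(\frac{n}{p_a^{n_a}p_b^{n_b}}\right)\phi(p_a^{n_a-k})\phi(p_b^{n_b-l})$, so the double sum becomes the product $\phi\!\left(\frac{n}{p_a^{n_a}p_b^{n_b}}\right)\left[\sum_{k=1}^{s}\phi(p_a^{n_a-k})\right]\left[\sum_{l=1}^{t}\phi(p_b^{n_b-l})\right]$, and each bracket is evaluated directly (equal to $p_a^{n_a-1}$ or $p_a^{n_a-1}(1-p_a^{-s})$, etc.), handling all four cases uniformly.
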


\begin{proof}
We have $\underset{k=1}{\overset{s}{\sum}}\; \underset{l=1}{\overset{t}{\sum}} \left\vert E_{\frac{n}{p_a^{k}p_b^{l}}} \right\vert = \underset{k=1}{\overset{s}{\sum}}\; \underset{l=1}{\overset{t}{\sum}} \phi\left( \frac{n}{p_a^{k}p_b^{l}}\right) =\phi\left(\frac{n}{p_a^{n_a}p_b^{n_b}}\right) \left[\underset{k=1}{\overset{s}{\sum}}\phi\left( p_a^{n_a -k}\right)\right] \left[\underset{l=1}{\overset{t}{\sum}} \phi\left( p_b^{n_b - l}\right) \right]$.
Using the three equalities that
$$\phi\left( \frac{n}{p_a^{n_a}p_b^{n_b}}\right)= \frac{n}{p_1p_2\cdots p_r}\cdot \frac{1}{p_a^{n_a-1}p_b^{n_b-1}}\cdot \phi\left(\frac{p_1p_2\cdots p_r}{p_ap_b} \right),$$
$$\underset{k=1}{\overset{s}{\sum}} \phi\left(p_a^{n_a-k}\right)=\underset{i=n_a-s}{\overset{n_a-1}{\sum}} \phi(p_a^i)= \left\{
\begin{array}{ll}
    p_a^{n_a-1}, & \text{if } s=n_a\\
    p_a^{n_a-1}\left(1-\frac{1}{p_a^{s}}\right), & \text{if } s<n_a\\
\end{array}
\right.$$
and
$$\underset{l=1}{\overset{t}{\sum}} \phi\left(p_b^{n_b-l}\right)=\underset{j=n_b-t}{\overset{n_b-1}{\sum}} \phi(p_b^j)= \left\{
\begin{array}{ll}
    p_b^{n_b-1}, & \text{if } t=n_b\\
    p_b^{n_b-1}\left(1-\frac{1}{p_b^{t}}\right), & \text{if } t<n_b,\\
\end{array}
\right.$$
the lemma now follows.
\end{proof}

\section{\bf The upper bounds $\beta_a^s$ for $\kappa(\mathcal{P}(C_n))$}

In this section, we consider $r\geq 3$. For $a\in [r]$ and $s\in [n_a]$, recall that $\beta_a^s$ is an upper bound for $\kappa(\mathcal{P}(C_n))$, where
\begin{equation}\label{eqn-5}
\beta_a^s=\phi(n) + \frac{n}{p_1p_2\cdots p_r}\cdot \frac{1}{p_a^{s-1}}\cdot \left[\frac{p_1p_2\cdots p_{r}}{p_a} +\phi\left(\frac{p_1p_2\cdots p_{r}}{p_a}\right)\left(p_a^{s-1}-2\right) \right].
\end{equation}
The next result follows from \cite[Proposition 3.4]{MPS}, which compares the bounds $\beta_a^s$ and $\beta_a^t$ for a given $a\in [r]$ and distinct $s,t\in [n_a]$.

\begin{lemma}[\cite{MPS}]\label{comp-Z-a-s}
The following hold for $a\in [r]$ and $n_a\geq 2:$
\begin{enumerate}
\item[(i)] If $2\phi\left(\frac{p_1p_2\cdots p_r}{p_a}\right)>\frac{p_1p_2\cdots p_r}{p_a}$, then $\beta_a^1 < \beta_a^2 <\cdots <  \beta_a^{n_a} $.
\item[(ii)] If $2\phi\left(\frac{p_1p_2\cdots p_r}{p_a}\right)<\frac{p_1p_2\cdots p_r}{p_a}$, then $\beta_a^1 > \beta_a^2 >\cdots >  \beta_a^{n_a} $.
\end{enumerate}
\end{lemma}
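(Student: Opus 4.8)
The plan is to put the formula (\ref{eqn-5}) for $\beta_a^s$ into a form in which the dependence on $s$ is confined to a single monotone factor, and then read off the two chains of inequalities at once. Write $P=p_1p_2\cdots p_r$ and $Q=P/p_a$; these are positive constants independent of $s$, as is $\frac{n}{P}$ (indeed $P\mid n$, so $\frac{n}{P}>0$). Distributing the factor $\frac{1}{p_a^{s-1}}$ inside the bracket of (\ref{eqn-5}),
$$\frac{1}{p_a^{s-1}}\left[Q+\phi(Q)\bigl(p_a^{s-1}-2\bigr)\right]=\frac{Q}{p_a^{s-1}}+\phi(Q)\left(1-\frac{2}{p_a^{s-1}}\right)=\phi(Q)+\frac{Q-2\phi(Q)}{p_a^{s-1}},$$
and hence
$$\beta_a^s=\phi(n)+\frac{n}{P}\,\phi(Q)+\frac{n}{P}\bigl(Q-2\phi(Q)\bigr)\cdot\frac{1}{p_a^{s-1}}.$$
Here the first two summands do not depend on $s$, and the only $s$-dependent factor is $p_a^{-(s-1)}$, which is strictly decreasing in $s$ over $[n_a]$ because $p_a\geq 2$.

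Next I would compare $\beta_a^s$ and $\beta_a^t$ for $s<t$ in $[n_a]$. Subtracting the two expressions,
$$\beta_a^s-\beta_a^t=\frac{n}{P}\bigl(Q-2\phi(Q)\bigr)\left(\frac{1}{p_a^{s-1}}-\frac{1}{p_a^{t-1}}\right),$$
and the last bracket is strictly positive since $p_a^{s-1}<p_a^{t-1}$. As $\frac{n}{P}>0$, the sign of $\beta_a^s-\beta_a^t$ equals the sign of $Q-2\phi(Q)$. Under the hypothesis $2\phi(Q)>Q$ of part (i) we get $Q-2\phi(Q)<0$, so $\beta_a^s<\beta_a^t$ whenever $s<t$, which is exactly $\beta_a^1<\beta_a^2<\cdots<\beta_a^{n_a}$; under the hypothesis $2\phi(Q)<Q$ of part (ii) we get $Q-2\phi(Q)>0$, so $\beta_a^s>\beta_a^t$ whenever $s<t$, which is $\beta_a^1>\beta_a^2>\cdots>\beta_a^{n_a}$.

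There is essentially no real obstacle in this argument: once $\beta_a^s$ is rewritten as above the statement is immediate, and the only points deserving a moment's care are the elementary rearrangement $\frac{Q}{p_a^{s-1}}+\phi(Q)\bigl(1-\frac{2}{p_a^{s-1}}\bigr)=\phi(Q)+\frac{Q-2\phi(Q)}{p_a^{s-1}}$ and the observation that it is the sign of $Q-2\phi(Q)$ (equivalently, of $2\phi(Q)-Q$), rather than of the full coefficient $\frac{n}{P}(Q-2\phi(Q))$, that controls monotonicity — though of course the two agree since $\frac{n}{P}>0$. This is just an explicit unwinding of \cite[Proposition 3.4]{MPS}, recorded in the normalized form $Q=\frac{p_1p_2\cdots p_r}{p_a}$ used throughout the sequel.
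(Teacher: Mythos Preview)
Your argument is correct: rewriting $\beta_a^s=\phi(n)+\frac{n}{P}\,\phi(Q)+\frac{n}{P}(Q-2\phi(Q))\,p_a^{-(s-1)}$ isolates the $s$-dependence in a single strictly monotone factor, and the sign of $Q-2\phi(Q)$ then determines the direction of the chain exactly as you state. The paper itself does not give a proof here but simply cites \cite[Proposition~3.4]{MPS}; your computation is precisely the natural unwinding of that reference, so there is nothing to add.
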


\begin{proposition}\label{compa}
Let $a,b\in [r]$ with $a<b$. If $p_a^{s}<p_b$ for some $s\in [n_a]$, then $\beta_a^{s}>\beta_b^1$.
\end{proposition}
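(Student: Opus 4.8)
The plan is to compare the two closed-form expressions for $\beta_a^s$ and $\beta_b^1$ coming from (\ref{eqn-5}), reduce everything to a single inequality in the quantities $p_a$, $p_b$, $s$ and the product $P:=p_1p_2\cdots p_r$, and then exploit the hypothesis $p_a^s<p_b$ together with $a<b$. Writing $N:=\frac{n}{P}$, formula (\ref{eqn-5}) gives
$$\beta_a^s=\phi(n)+\frac{N}{p_a^{s-1}}\left[\frac{P}{p_a}+\phi\!\left(\frac{P}{p_a}\right)\bigl(p_a^{s-1}-2\bigr)\right],\qquad \beta_b^1=\phi(n)+N\left[\frac{P}{p_b}-\phi\!\left(\frac{P}{p_b}\right)\right].$$
Since $\phi(n)$ and $N$ are common, it suffices to show
$$\frac{1}{p_a^{s-1}}\left[\frac{P}{p_a}+\phi\!\left(\frac{P}{p_a}\right)\bigl(p_a^{s-1}-2\bigr)\right]>\frac{P}{p_b}-\phi\!\left(\frac{P}{p_b}\right).$$
First I would use the multiplicativity of $\phi$: writing $M:=\phi\!\left(\frac{P}{p_ap_b}\right)$ we have $\phi\!\left(\frac{P}{p_a}\right)=M\,\phi(p_b)$ and $\phi\!\left(\frac{P}{p_b}\right)=M\,\phi(p_a)$, while $\frac{P}{p_a}=\frac{P}{p_ap_b}\cdot p_b$ and $\frac{P}{p_b}=\frac{P}{p_ap_b}\cdot p_a$. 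Denoting $D:=\frac{P}{p_ap_b}$, the target inequality becomes, after multiplying through by $p_a^{s-1}$,
$$D p_b+M\phi(p_b)\bigl(p_a^{s-1}-2\bigr)>p_a^{s-1}\bigl(D p_a-M\phi(p_a)\bigr),$$
i.e.
$$D\bigl(p_b-p_a^{s}\bigr)+M\bigl[p_a^{s-1}\phi(p_a)+\phi(p_b)p_a^{s-1}-2\phi(p_b)\bigr]>0.$$

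Now the hypothesis $p_a^{s}<p_b$ makes the first term $D\bigl(p_b-p_a^{s}\bigr)$ strictly positive (note $D\geq 1$). For the bracketed coefficient of $M$, I would show it is nonnegative: since $s\geq 1$ we have $p_a^{s-1}\geq 1$, so $p_a^{s-1}\phi(p_a)\geq \phi(p_a)\geq 1$; and $\phi(p_b)p_a^{s-1}-2\phi(p_b)=\phi(p_b)\bigl(p_a^{s-1}-2\bigr)$, which is nonnegative once $p_a^{s-1}\geq 2$, i.e. whenever $s\geq 2$ (as $p_a\geq 2$), and when $s=1$ this term equals $-\phi(p_b)$ but is then compensated: for $s=1$ the whole $M$-coefficient is $\phi(p_a)+\phi(p_b)-2\phi(p_b)=\phi(p_a)-\phi(p_b)<0$, so in that case I cannot simply drop it. Hence I would split into the cases $s=1$ and $s\geq 2$. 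For $s\geq 2$: both terms are $\geq 0$ and the first is $>0$, done. For $s=1$: the hypothesis reads $p_a<p_b$ (automatic from $a<b$), and the inequality to prove is $D(p_b-p_a)>M\bigl(\phi(p_b)-\phi(p_a)\bigr)$; since $\phi(p_b)-\phi(p_a)=p_b-p_a$, this is $D(p_b-p_a)>M(p_b-p_a)$, i.e. $D>M$, which holds because $D=\frac{P}{p_ap_b}>\phi\!\left(\frac{P}{p_ap_b}\right)=M$ (strict as long as $\frac{P}{p_ap_b}>1$; and if $\frac{P}{p_ap_b}=1$ then $D=M=1$ and $\beta_a^1=\beta_b^1$, but then $r=2$, contradicting $r\geq 3$, so strictness is genuine).

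The main obstacle is the $s=1$ borderline, where the coefficient of $M$ is negative and one must recognize the cancellation $\phi(p_b)-\phi(p_a)=p_b-p_a$ for primes and then invoke the strict inequality $\frac{P}{p_ap_b}>\phi\!\left(\frac{P}{p_ap_b}\right)$, which needs $P/(p_ap_b)>1$, guaranteed by $r\geq 3$. (Alternatively, one can observe that for $s=1$ the claim $\beta_a^1>\beta_b^1$ is exactly the content of (\ref{eqn-1}) applied to the "second type" bookkeeping, but the direct argument above is self-contained.) Everything else is routine algebra with the multiplicativity of $\phi$ and the elementary bound $m>\phi(m)$ for $m\geq 2$.
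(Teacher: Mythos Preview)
Your proof is correct and follows essentially the same route as the paper: both express $\beta_a^s-\beta_b^1$ via (\ref{eqn-5}), factor using multiplicativity of $\phi$ to obtain (in your notation) $D(p_b-p_a^s)+M\bigl[(p_a^{s-1}-2)\phi(p_b)+p_a^{s-1}\phi(p_a)\bigr]$, and then split into the trivial case $s\geq 2$ and the case $s=1$, where the expression collapses to $(p_b-p_a)(D-M)>0$ using $r\geq 3$. Your parenthetical remark about (\ref{eqn-1}) is not quite apt, but since you do not rely on it, the argument stands.
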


\begin{proof}
We have $\beta_a^{s}=\phi(n)+\frac{n}{p_1p_2\cdots p_r}\cdot u_a$ and $\beta_b^{1}=\phi(n)+\frac{n}{p_1p_2\cdots p_r}\cdot u_b$, where
$$u_a=\frac{1}{p_a^{s -1}}\cdot \left[\frac{p_1p_2\cdots p_r}{p_a} + \left(p_a^{s-1} -2\right)\phi\left(\frac{p_1p_2\cdots p_r}{p_a}\right)\right]$$
and
$$u_b=\frac{p_1p_2\cdots p_r}{p_b} - \phi\left(\frac{p_1p_2\cdots p_r}{p_b}\right).$$
It can be calculated that
$$u_a -u_b = \frac{1}{p_a^{s -1}}\cdot \left[\left[\left(p_a^{s-1}-2\right)\phi(p_b)+p_a^{s-1}\phi(p_a)\right]\phi\left(\frac{p_1p_2\cdots p_r}{p_a p_b}\right) + \left(p_b-p_a^{s}\right) \frac{p_1p_2\cdots p_r}{p_a p_b} \right].$$
If $s\geq 2$, then clearly $u_a -u_b >0$ as $p_a^{s}<p_b$. If $s=1$, then
$$u_a -u_b = \left(p_b-p_a\right)\left[\frac{p_1p_2\cdots p_r}{p_a p_b} - \phi\left(\frac{p_1p_2\cdots p_r}{p_a p_b}\right)\right]>0$$
as $p_a<p_b$ and $r\geq 3$. It follows that $\beta_a^{s}>\beta_b^{1}$.
\end{proof}

Taking $s=1$ in Proposition \ref{compa}, we get the following:

\begin{corollary}\label{cor-compa}
$\beta_1^1 >\beta_2^1>\cdots >\beta_r^1$.
\end{corollary}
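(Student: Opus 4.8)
The statement to prove is Corollary~\ref{cor-compa}, namely the chain of strict inequalities $\beta_1^1 > \beta_2^1 > \cdots > \beta_r^1$. The plan is to obtain this immediately as a special case of Proposition~\ref{compa}. Recall that Proposition~\ref{compa} asserts: if $a,b\in[r]$ with $a<b$ and $p_a^s < p_b$ for some $s\in[n_a]$, then $\beta_a^s > \beta_b^1$.

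\smallskip

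First I would observe that for any two consecutive indices $a$ and $a+1$ in $[r]$ (with $a\le r-1$), we may take $s=1$. Since $s=1\le n_a$ always holds (as $n_a\geq 1$), and since $p_a^1 = p_a < p_{a+1}$ because the primes are listed in strictly increasing order $p_1 < p_2 < \cdots < p_r$, the hypothesis $p_a^s < p_b$ of Proposition~\ref{compa} is satisfied with $b=a+1$. Applying the proposition therefore yields $\beta_a^1 = \beta_a^s > \beta_{a+1}^1$ for each $a\in[r-1]$. Chaining these $r-1$ inequalities together gives $\beta_1^1 > \beta_2^1 > \cdots > \beta_r^1$, which is exactly the assertion.

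\smallskip

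There is essentially no obstacle here: the work has already been done in Proposition~\ref{compa}, and the only point to verify is that its hypothesis is met in the specialization to $s=1$ and $b=a+1$, which is transparent from $p_a < p_{a+1}$. (One could alternatively invoke Proposition~\ref{compa} directly for \emph{every} pair $a<b$ by taking $s=1$ and using $p_a < p_b$, but the consecutive-pair version suffices to produce the full chain and is slightly cleaner to write.) Thus the proof is a one-line deduction, and I would simply write: ``Taking $s=1$ in Proposition~\ref{compa}, for each $a\in[r-1]$ we have $p_a < p_{a+1}$, so $\beta_a^1 > \beta_{a+1}^1$; combining these gives the claim.''
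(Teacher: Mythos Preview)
Your proof is correct and is essentially identical to the paper's: the paper also simply says ``Taking $s=1$ in Proposition~\ref{compa}, we get the following,'' so your deduction via the specialization $s=1$ and $p_a<p_b$ matches it exactly.
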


\begin{proposition}\label{compa-1}
We have $\beta_r^1 < \beta_1^{n_1}$. As a consequence, $\kappa(\mathcal{P}(C_n))< \beta_1^{n_1}$ and so $Z_1^{n_1}$ can never be a minimum cut-set of $\mathcal{P}(C_n)$.
\end{proposition}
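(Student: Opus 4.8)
The plan is to prove the single inequality $\beta_r^1 < \beta_1^{n_1}$; the two consequences are then immediate, since $\kappa(\mathcal{P}(C_n)) \le \beta_r^1$ by Proposition \ref{prop-Z-a-s} (applied with $a=r$, $s=1$), so $\kappa(\mathcal{P}(C_n)) \le \beta_r^1 < \beta_1^{n_1} = |Z_1^{n_1}|$, which forces $Z_1^{n_1}$ to be strictly larger than any minimum cut-set and hence never to be one.

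To compare $\beta_r^1$ and $\beta_1^{n_1}$, I would write $N = p_1 p_2 \cdots p_r$ and recall from the proof of Proposition \ref{compa} that $\beta_1^{n_1} = \phi(n) + \frac{n}{N} u_1$ and $\beta_r^1 = \phi(n) + \frac{n}{N} u_r$, where
$$u_1 = \frac{1}{p_1^{n_1-1}}\left[\frac{N}{p_1} + \phi\left(\frac{N}{p_1}\right)\left(p_1^{n_1-1} - 2\right)\right], \qquad u_r = \frac{N}{p_r} - \phi\left(\frac{N}{p_r}\right).$$
So it suffices to show $u_1 > u_r$, and I would split into cases according to the sign of $\frac{N}{p_1} - 2\phi\left(\frac{N}{p_1}\right) = p_2 \cdots p_r - 2\phi(p_2 \cdots p_r)$. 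This quantity is nonzero: the equality $2\phi(p_2 \cdots p_r) = p_2 \cdots p_r$ is excluded by the equality discussion following (\ref{eqn-2}), because for $r \ge 3$ the set $\{2,\dots,r\}$ (of size $r-1 \ge 2$) matches neither of the two exceptional configurations. If $n_1 = 1$, or if $n_1 \ge 2$ and $2\phi\left(\frac{N}{p_1}\right) > \frac{N}{p_1}$, then $\beta_1^{n_1} \ge \beta_1^1$ (trivially in the first situation; by Lemma \ref{comp-Z-a-s}(i) with $a=1$ in the second), while $\beta_1^1 > \beta_r^1$ by Corollary \ref{cor-compa}; hence $\beta_1^{n_1} > \beta_r^1$ in these cases.

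The remaining case is $n_1 \ge 2$ together with $2\phi\left(\frac{N}{p_1}\right) < \frac{N}{p_1}$. Here I would rewrite $u_1 = \phi\left(\frac{N}{p_1}\right) + \frac{1}{p_1^{n_1-1}}\left(\frac{N}{p_1} - 2\phi\left(\frac{N}{p_1}\right)\right)$; the hypothesis makes the second summand strictly positive, so $u_1 > \phi\left(\frac{N}{p_1}\right)$. It then remains only to verify $\phi\left(\frac{N}{p_1}\right) \ge u_r$, that is $\phi(p_2 \cdots p_r) \ge p_1 \cdots p_{r-1} - \phi(p_1 \cdots p_{r-1})$. Since $\phi(p_2 \cdots p_r) = \prod_{i=2}^{r}(p_i - 1)$ and $p_i - 1 \ge p_{i-1}$ for each $i$ (as $p_{i-1} < p_i$ are integers), one gets $\phi(p_2 \cdots p_r) \ge \prod_{i=2}^{r} p_{i-1} = p_1 \cdots p_{r-1}$, which strictly exceeds $p_1 \cdots p_{r-1} - \phi(p_1 \cdots p_{r-1})$ because $\phi(p_1 \cdots p_{r-1}) \ge 1$. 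Chaining the estimates, $u_1 > \phi\left(\frac{N}{p_1}\right) \ge p_1 \cdots p_{r-1} > u_r$, hence $\beta_1^{n_1} > \beta_r^1$, completing all cases.

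I do not anticipate a serious obstacle: the proof is short. The only points requiring care are (a) confirming that the degenerate equality $2\phi(p_2 \cdots p_r) = p_2 \cdots p_r$ is genuinely ruled out, so that the case split is exhaustive, and (b) observing that the hypothesis $2\phi(N/p_1) < N/p_1$ is precisely what yields $u_1 > \phi(N/p_1)$ in the last case — after which the comparison collapses to the elementary termwise bound $\prod_{i=2}^{r}(p_i-1) \ge \prod_{i=1}^{r-1}p_i$ coming from $p_i - 1 \ge p_{i-1}$.
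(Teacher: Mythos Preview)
Your proof is correct and follows essentially the same route as the paper: both split on whether $\beta_1^1 \le \beta_1^{n_1}$ (equivalently, on the sign of $\frac{N}{p_1} - 2\phi\!\left(\frac{N}{p_1}\right)$ via Lemma~\ref{comp-Z-a-s}), dispose of the easy case by Corollary~\ref{cor-compa}, and in the remaining case bound $u_1 > \phi(p_2\cdots p_r) \ge p_1\cdots p_{r-1} > u_r$. The only cosmetic difference is that you spell out the termwise bound $p_i - 1 \ge p_{i-1}$ explicitly, whereas the paper simply asserts the resulting inequality.
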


\begin{proof}
If $\beta_1^1 \leq \beta_1^{n_1}$, then the result follows applying Corollary \ref{cor-compa}. Assume that $\beta_1^1 > \beta_1^{n_1}$. Since $r\geq 3$, we have $2\phi\left(p_2p_3\cdots p_{r}\right)\neq p_2p_3\cdots p_{r}$. Therefore,
$2\phi\left(p_2p_3\cdots p_{r}\right) < p_2p_3\cdots p_{r}$ by Lemma \ref{comp-Z-a-s}. Then from (\ref{eqn-5}), we get
\begin{align*}
\beta_1^{n_1}&=\phi(n) + \frac{n}{p_1p_2\cdots p_r}\cdot \left[\phi\left(p_2p_3\cdots p_{r}\right) + \frac{1}{p_1^{n_1-1}}\cdot \left[ p_2p_3\cdots p_{r} -2\phi\left(p_2p_3\cdots p_{r}\right)\right] \right]\\
 & > \phi(n) + \frac{n}{p_1p_2\cdots p_r}\cdot \phi\left(p_2p_3\cdots p_{r}\right) > \phi(n) + \frac{n}{p_1p_2\cdots p_r}\cdot p_1p_2\cdots p_{r-1}.
\end{align*}
This gives $\beta_1^{n_1}> \phi(n) + \frac{n}{p_1p_2\cdots p_r}\cdot \left[p_1p_2\cdots p_{r-1}-\phi(p_1p_2\cdots p_{r-1}) \right] = \beta_r^1$. The remaining part follows from Proposition \ref{prop-Z-a-s}.
\end{proof}

The following result is some kind of a generalization of Proposition \ref{compa} under certain conditions.

\begin{lemma}\label{compa-2}
Let $a,b\in [r]$ with $a< b$, $s\in [n_a]$ and $t\in [n_b]$. If $p_a^{s-1}\leq p_b^{t-1}$ and $2\phi\left(\frac{p_1p_2\cdots p_{r}}{p_a}\right) < \frac{p_1p_2\cdots p_{r}}{p_a}$, then $\beta_a^{s}>\beta_b^{t}$.
\end{lemma}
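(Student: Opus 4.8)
The plan is to compare $\beta_a^s$ with $\beta_b^t$ by writing both in the form $\phi(n)+\frac{n}{p_1p_2\cdots p_r}\cdot v$ as in the proof of Proposition \ref{compa}, reducing everything to a comparison of the two ``$v$''-terms. Write
$$v_a=\frac{1}{p_a^{s-1}}\left[\frac{p_1p_2\cdots p_r}{p_a}+\left(p_a^{s-1}-2\right)\phi\left(\frac{p_1p_2\cdots p_r}{p_a}\right)\right],\qquad
v_b=\frac{1}{p_b^{t-1}}\left[\frac{p_1p_2\cdots p_r}{p_b}+\left(p_b^{t-1}-2\right)\phi\left(\frac{p_1p_2\cdots p_r}{p_b}\right)\right].$$
It suffices to show $v_a>v_b$. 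The idea is a two-step comparison: first pass from $v_a$ to $\beta_a^t$'s coefficient, call it $v_a'$ (same $a$, exponent $t$ instead of $s$), and then from $v_a'$ to $v_b$.

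For the first step, I would invoke Lemma \ref{comp-Z-a-s}(ii): the hypothesis $2\phi\left(\frac{p_1p_2\cdots p_r}{p_a}\right)<\frac{p_1p_2\cdots p_r}{p_a}$ gives $\beta_a^1>\beta_a^2>\cdots>\beta_a^{n_a}$, hence, reading off the coefficients, $v_a\ge v_a'$ whenever $s\le t$. If instead $s>t$, then $v_a\ge v_a'$ could fail; but in that case I would note $p_a^{s-1}\le p_b^{t-1}$ together with $p_a<p_b$ still forces $t\ge s$ unless... actually $p_a^{s-1}\le p_b^{t-1}$ with $p_a<p_b$ does not preclude $s>t$ (e.g. $p_a=2,p_b=3$, $s=3,t=2$ gives $4\le 3$ false; $s=2,t=2$: $2\le 3$ ok). So I should handle the comparison directly rather than splitting on $s$ vs. $t$. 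The cleaner route: compute $v_a-v_b$ in one shot. Using $\frac{p_1p_2\cdots p_r}{p_a}=\phi\!\left(\frac{p_1\cdots p_r}{p_ap_b}\right)\cdot\frac{p_1p_2\cdots p_r}{p_a\phi\left(\frac{p_1\cdots p_r}{p_ap_b}\right)}$ is awkward; instead factor out $M:=\phi\!\left(\frac{p_1p_2\cdots p_r}{p_ap_b}\right)$ and $N:=\frac{p_1p_2\cdots p_r}{p_ap_b}$, so that $\frac{p_1\cdots p_r}{p_a}=p_bN$, $\phi\!\left(\frac{p_1\cdots p_r}{p_a}\right)=\phi(p_b)M$, and similarly with $a,b$ swapped. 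Then
$$v_a-v_b=\frac{p_bN}{p_a^{s-1}}+\left(1-\frac{2}{p_a^{s-1}}\right)\phi(p_b)M-\frac{p_aN}{p_b^{t-1}}-\left(1-\frac{2}{p_b^{t-1}}\right)\phi(p_a)M.$$
Group the $N$-terms and the $M$-terms: the $N$-part is $N\left(\frac{p_b}{p_a^{s-1}}-\frac{p_a}{p_b^{t-1}}\right)$, and since $p_a<p_b$ and $p_a^{s-1}\le p_b^{t-1}$ we get $\frac{p_b}{p_a^{s-1}}\ge\frac{p_b}{p_b^{t-1}}>\frac{p_a}{p_b^{t-1}}$, so the $N$-part is strictly positive. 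The $M$-part is $M\left[\phi(p_b)-\phi(p_a)-\frac{2\phi(p_b)}{p_a^{s-1}}+\frac{2\phi(p_a)}{p_b^{t-1}}\right]$; this can be negative, so the remaining work is to show the positive $N$-contribution dominates.

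The main obstacle is precisely controlling the $M$-part when it is negative and exhibiting enough slack in the $N$-part. Here I expect to use the hypothesis $2\phi\!\left(\frac{p_1\cdots p_r}{p_a}\right)<\frac{p_1\cdots p_r}{p_a}$, i.e. $2\phi(p_b)M<p_bN$, which bounds $M$ from above by $\frac{p_bN}{2\phi(p_b)}$; substituting this into the (negative part of the) $M$-term and combining with the $N$-term should yield a strictly positive lower bound, using also $r\ge 3$ so that $N-M=\frac{p_1\cdots p_r}{p_ap_b}-\phi\!\left(\frac{p_1\cdots p_r}{p_ap_b}\right)>0$ (the product $\frac{p_1\cdots p_r}{p_ap_b}$ is not $1$). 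A convenient intermediate reduction: since $p_a^{s-1}\le p_b^{t-1}$, replace $\frac{2\phi(p_b)}{p_a^{s-1}}$ by $\frac{2\phi(p_b)}{p_b^{t-1}}$ to get the lower bound $v_a-v_b\ge N\!\left(\frac{p_b}{p_a^{s-1}}-\frac{p_a}{p_b^{t-1}}\right)+M\!\left(\phi(p_b)-\phi(p_a)\right)-\frac{2M}{p_b^{t-1}}\!\left(\phi(p_b)-\phi(p_a)\right)$, and now every bracketed factor $\phi(p_b)-\phi(p_a)=p_b-p_a>0$ and $1-\frac{2}{p_b^{t-1}}\ge -1$, so the whole $M$-block is $\ge -(p_b-p_a)M$; thus $v_a-v_b\ge \left(\frac{p_b}{p_a^{s-1}}-\frac{p_a}{p_b^{t-1}}\right)N-(p_b-p_a)M$. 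Finally bound $\frac{p_b}{p_a^{s-1}}-\frac{p_a}{p_b^{t-1}}\ge p_b\cdot\frac{1}{p_a^{s-1}}-\frac{p_a}{p_b^{t-1}}$; when $s=1$ this is $p_b-\frac{p_a}{p_b^{t-1}}\ge p_b-p_a$ and we conclude using $N\ge M$ with $N>M$ unless $N=1$ (excluded since $r\ge 3$); when $s\ge 2$ the factor $\frac{1}{p_a^{s-1}}$ shrinks $N$, so there I would instead not discard the $\frac{2}{p_a^{s-1}}$ gain and rerun the estimate keeping that term, which is where the hypothesis on $p_a$ genuinely enters. I anticipate a short case split on $s=1$ versus $s\ge2$ at the very end, mirroring the structure of the proof of Proposition \ref{compa}.
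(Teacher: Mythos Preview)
Your setup is fine and the identity
$$v_a-v_b=N\Bigl(\frac{p_b}{p_a^{s-1}}-\frac{p_a}{p_b^{t-1}}\Bigr)+M\Bigl[\phi(p_b)-\phi(p_a)-\frac{2\phi(p_b)}{p_a^{s-1}}+\frac{2\phi(p_a)}{p_b^{t-1}}\Bigr]$$
(with $N=\dfrac{p_1\cdots p_r}{p_ap_b}$ and $M=\phi\!\left(\dfrac{p_1\cdots p_r}{p_ap_b}\right)$) is correct. The gap is the inequality you deduce next. You replace $\dfrac{2\phi(p_b)}{p_a^{s-1}}$ by the \emph{smaller} quantity $\dfrac{2\phi(p_b)}{p_b^{t-1}}$; since this term carries a minus sign, that replacement makes the bracket, and hence the whole expression, \emph{larger}. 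So what you have written is an \emph{upper} bound for $v_a-v_b$, not a lower bound. (A quick check: with $p_a=2$, $p_b=7$, $N=15$, $M=8$, $s=1$, $t=2$, one gets $v_a-v_b=47$ while your ``lower bound'' evaluates to $905/7>129$.) Consequently the displayed inequality $v_a-v_b\ge\bigl(\tfrac{p_b}{p_a^{s-1}}-\tfrac{p_a}{p_b^{t-1}}\bigr)N-(p_b-p_a)M$ and the $s=1$ conclusion are unjustified, and the $s\ge2$ case is only a sketch.

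The idea you are missing is a different regrouping that makes the hypothesis $p_a^{s-1}\le p_b^{t-1}$ point the right way. Rewrite
$$v_a=\frac{1}{p_a^{s-1}}\Bigl[\frac{p_1\cdots p_r}{p_a}-2\,\phi\Bigl(\frac{p_1\cdots p_r}{p_a}\Bigr)\Bigr]+\phi\Bigl(\frac{p_1\cdots p_r}{p_a}\Bigr),$$
and do the same for $v_b$. The second hypothesis of the lemma says precisely that the bracketed quantity in $v_a$ is \emph{positive}; hence $\dfrac{1}{p_a^{s-1}}\ge\dfrac{1}{p_b^{t-1}}$ gives a genuine lower bound for $v_a$ with $p_b^{t-1}$ in place of $p_a^{s-1}$. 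Subtracting $v_b$ and simplifying then yields
$$v_a-v_b\ \ge\ \frac{p_b-p_a}{p_b^{\,t-1}}\Bigl[N+(p_b^{\,t-1}-2)M\Bigr]>0,$$
the last strict inequality because either $p_b^{t-1}\ge2$, or $t=1$ and $N-M>0$ since $r\ge3$. No case split on $s$ is needed, and the hypothesis $2\phi\!\left(\frac{p_1\cdots p_r}{p_a}\right)<\frac{p_1\cdots p_r}{p_a}$ is used exactly once, to fix the sign of the bracket.
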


\begin{proof}
From (\ref{eqn-5}), we have $\beta_a^{s}=\phi(n)+\frac{n}{p_1p_2\cdots p_r}\cdot u_a$ and $\beta_b^{t}=\phi(n)+\frac{n}{p_1p_2\cdots p_r}\cdot u_b$, where
$$u_a=\frac{1}{p_a^{s -1}}\cdot \left[\frac{p_1p_2\cdots p_r}{p_a} - 2\phi\left(\frac{p_1p_2\cdots p_r}{p_a}\right)\right] + \phi\left(\frac{p_1p_2\cdots p_r}{p_a}\right)$$
and
$$u_b=\frac{1}{p_b^{t -1}}\cdot \left[\frac{p_1p_2\cdots p_r}{p_b} - 2\phi\left(\frac{p_1p_2\cdots p_r}{p_b}\right)\right] + \phi\left(\frac{p_1p_2\cdots p_r}{p_b}\right).$$
Since $p_a^{s-1}\leq p_b^{t-1}$, we have $\frac{1}{p_a^{s -1}} \geq \frac{1}{p_b^{t -1}}$. Then $2\phi\left(\frac{p_1p_2\cdots p_{r}}{p_a}\right) < \frac{p_1p_2\cdots p_{r}}{p_a}$ implies that
$$u_a \geq \frac{1}{p_b^{t-1}}\cdot \left[\frac{p_1p_2\cdots p_r}{p_a} - 2\phi\left(\frac{p_1p_2\cdots p_r}{p_a}\right)\right] + \phi\left(\frac{p_1p_2\cdots p_r}{p_a}\right).$$
It can be calculated that
$$u_a -u_b \geq \frac{p_b -p_a}{p_b^{t -1}}\cdot \left[\frac{p_1p_2\cdots p_r}{p_a p_b} +\left(p_b^{t-1}-2\right)\phi\left(\frac{p_1p_2\cdots p_r}{p_a p_b}\right) \right] > 0,$$
where the last inequality follows using the facts that $r\geq 3$ and $a< b$. Thus $u_a >u_b$ and hence $\beta_a^{s}>\beta_b^{t}$.
\end{proof}

In the following proposition, for $a,b\in [r]$ with $a< b$ and $n_b\geq 2$, we compare the bounds $\beta_a^{n_a}$ and $\beta_b^{n_b}$ under certain conditions.

\begin{proposition}\label{compa-3}
Let $a,b\in [r]$ such that $a< b$ and $2\phi\left(\frac{p_1p_2\cdots p_{r}}{p_a}\right) < \frac{p_1p_2\cdots p_{r}}{p_a}$. Then the following hold:
\begin{enumerate}
\item[(i)] If $n_b\geq 3$ and $p_{b}^2 (p_b -p_a)+2\phi(p_a)\geq (r-1)p_a$, then $\beta_a^{n_a} > \beta_b^{n_b}$.
\item[(ii)] If $n_b=2$ and $p_b(p_b -p_{a})+2\phi(p_{a}) \geq (r-1)p_a$, then $\beta_a^{n_a} > \beta_b^{n_b}$.
\end{enumerate}
\end{proposition}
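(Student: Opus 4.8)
The plan is to reduce the claimed inequality $\beta_a^{n_a}>\beta_b^{n_b}$, exactly as in the proof of Lemma~\ref{compa-2}, to a comparison of two explicit quantities. Writing $\beta_a^{n_a}=\phi(n)+\frac{n}{p_1p_2\cdots p_r}\cdot u_a$ and $\beta_b^{n_b}=\phi(n)+\frac{n}{p_1p_2\cdots p_r}\cdot u_b$, where
$$u_a=\frac{1}{p_a^{n_a-1}}\left[\frac{p_1p_2\cdots p_r}{p_a}-2\phi\left(\frac{p_1p_2\cdots p_r}{p_a}\right)\right]+\phi\left(\frac{p_1p_2\cdots p_r}{p_a}\right)$$
and $u_b$ is the analogous expression with $b$ and $n_b$ in place of $a$ and $n_a$, it suffices to show $u_a>u_b$. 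The hypothesis $2\phi\left(\frac{p_1p_2\cdots p_r}{p_a}\right)<\frac{p_1p_2\cdots p_r}{p_a}$ says that the bracket in $u_a$ is positive, and Lemma~\ref{comparison-1} (applied with $\mu=2$) then gives that the bracket $\frac{p_1p_2\cdots p_r}{p_b}-2\phi\left(\frac{p_1p_2\cdots p_r}{p_b}\right)$ appearing in $u_b$ is positive as well; this positivity will be used in a crucial way.

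Next I would put $M=\frac{p_1p_2\cdots p_r}{p_ap_b}$. Since $M$ is coprime to both $p_a$ and $p_b$, we have $\phi\left(\frac{p_1p_2\cdots p_r}{p_a}\right)=\phi(M)\phi(p_b)$, $\phi\left(\frac{p_1p_2\cdots p_r}{p_b}\right)=\phi(M)\phi(p_a)$, $\frac{p_1p_2\cdots p_r}{p_a}=Mp_b$ and $\frac{p_1p_2\cdots p_r}{p_b}=Mp_a$. Setting $A_a=Mp_b-2\phi(M)\phi(p_b)$ and $A_b=Mp_a-2\phi(M)\phi(p_a)$, both positive by the previous paragraph, a short computation (using $\phi(p_b)-\phi(p_a)=p_b-p_a$) yields
$$u_a-u_b=\frac{A_a}{p_a^{n_a-1}}+\phi(M)(p_b-p_a)-\frac{A_b}{p_b^{n_b-1}}.$$
Since $\frac{A_a}{p_a^{n_a-1}}>0$, it is enough to prove $\phi(M)(p_b-p_a)\geq\frac{A_b}{p_b^{n_b-1}}$. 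Because $A_b>0$, in case (i) I may replace $p_b^{n_b-1}$ by the smaller number $p_b^2$ (this is where $n_b\geq 3$ is used), and in case (ii) one has $p_b^{n_b-1}=p_b$; clearing denominators and substituting $A_b=Mp_a-2\phi(M)\phi(p_a)$, case (i) reduces to $\phi(M)\left[p_b^2(p_b-p_a)+2\phi(p_a)\right]\geq Mp_a$ and case (ii) to $\phi(M)\left[p_b(p_b-p_a)+2\phi(p_a)\right]\geq Mp_a$.

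Finally, in each case the bracketed factor is $\geq(r-1)p_a$ by the corresponding hypothesis, so the left-hand side is $\geq(r-1)p_a\cdot\phi(M)$, which is $\geq Mp_a$ by the inequality~(\ref{eqn-2}) applied to the set $[r]\setminus\{a,b\}$, of cardinality $r-2$ (here $r\geq 3$ ensures this set is non-empty). Hence $u_a-u_b\geq 0$, and in fact $u_a-u_b>0$ because the discarded term $\frac{A_a}{p_a^{n_a-1}}$ is strictly positive; this gives $\beta_a^{n_a}>\beta_b^{n_b}$. I do not expect any serious obstacle: the only point needing care is the sign bookkeeping in the step that enlarges $\frac{A_b}{p_b^{n_b-1}}$ to $\frac{A_b}{p_b^2}$ in case (i), which is legitimate precisely because $A_b>0$, and this is also the reason the two cases must carry the separate hypotheses involving $p_b^2$ and $p_b$.
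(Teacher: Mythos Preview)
Your argument is correct and is essentially the same as the paper's: the paper likewise drops the positive term $\frac{A_a}{p_a^{n_a-1}}$ (phrased as the strict bound $u_a>\phi\!\left(\frac{p_1\cdots p_r}{p_a}\right)$), replaces $p_b^{n_b-1}$ by $p_b^{\epsilon}$ with $\epsilon\in\{1,2\}$ using the positivity of $A_b$, and then reduces to $[p_b^{\epsilon}(p_b-p_a)+2\phi(p_a)]\phi(M)\geq Mp_a$, which follows from the hypothesis together with the inequality~(\ref{eqn-2}) applied to $[r]\setminus\{a,b\}$.
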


\begin{proof}
Since $a <b$ and $2\phi\left(\frac{p_1p_2\cdots p_{r}}{p_a}\right) < \frac{p_1p_2\cdots p_{r}}{p_a}$, we have $2\phi\left(\frac{p_1p_2\cdots p_{r}}{p_b}\right) < \frac{p_1p_2\cdots p_{r}}{p_b}$ by Lemma \ref{comparison-1}. We can write $\beta_a^{n_a}=\phi(n)+\frac{n}{p_1p_2\cdots p_r}\cdot u_a$ and $\beta_b^{n_b}=\phi(n)+\frac{n}{p_1p_2\cdots p_r}\cdot u_b$, where
\begin{align*}
u_a & =\frac{1}{p_a^{n_a -1}}\cdot \left[\frac{p_1p_2\cdots p_r}{p_a} - 2\phi\left(\frac{p_1p_2\cdots p_r}{p_a}\right)\right] + \phi\left(\frac{p_1p_2\cdots p_r}{p_a}\right) > \phi\left(\frac{p_1p_2\cdots p_r}{p_a}\right)
\end{align*}
and
\begin{align*}
u_b & =\frac{1}{p_b^{n_b -1}}\cdot \left[\frac{p_1p_2\cdots p_r}{p_b} - 2\phi\left(\frac{p_1p_2\cdots p_r}{p_b}\right)\right] + \phi\left(\frac{p_1p_2\cdots p_r}{p_b}\right)\\
 & \leq \frac{1}{p_b^{\epsilon}}\cdot \left[\frac{p_1p_2\cdots p_r}{p_b} - 2\phi\left(\frac{p_1p_2\cdots p_r}{p_b}\right)\right] + \phi\left(\frac{p_1p_2\cdots p_r}{p_b}\right),
\end{align*}
where $\epsilon=1$ or $2$ according as $n_b = 2$ or $n_b\geq 3$. Put $v:=\phi\left(\frac{p_1p_2\cdots p_r}{p_a}\right)$ and $w:= \frac{1}{p_b^{\epsilon}}\cdot \left[\frac{p_1p_2\cdots p_r}{p_b} - 2\phi\left(\frac{p_1p_2\cdots p_r}{p_b}\right)\right] + \phi\left(\frac{p_1p_2\cdots p_r}{p_b}\right)$. Then
$$\beta_a^{n_a} - \beta_b^{n_b}=\frac{n}{p_1p_2\cdots p_r}\cdot (u_a -u_b) >  \frac{n}{p_1p_2\cdots p_r}\cdot (v-w).$$
We need to show that $v-w \geq 0$. It can be calculated that
\begin{align*}
v-w & = \frac{1}{p_b^{\epsilon}}\cdot\left[\left[p_b^{\epsilon}(p_b -p_a)+2\phi(p_a)\right]\cdot \phi\left(\frac{p_1p_2\cdots p_r}{p_ap_b}\right)- \frac{p_1p_2\cdots p_r}{p_b}\right].
\end{align*}
By the given hypothesis, we have $p_b^{\epsilon}(p_b -p_a)+2\phi(p_a)\geq (r-1)p_a$. Therefore,
\begin{align*}
v-w & \geq \frac{1}{p_b^{\epsilon}}\cdot\left[(r-1)p_a \cdot\phi\left(\frac{p_1p_2\cdots p_r}{p_ap_b}\right)- \frac{p_1p_2\cdots p_r}{p_ap_b}\cdot p_a\right] \geq 0,
\end{align*}
where the last inequality follows using (\ref{eqn-2}).
\end{proof}

\begin{corollary}\label{cor-compa-3}
Let $a\in [r-1]$ such that $2\phi\left(\frac{p_1p_2\cdots p_{r}}{p_a}\right) < \frac{p_1p_2\cdots p_{r}}{p_a}$. Then the following hold:
\begin{enumerate}
\item[(i)] If $n_r\geq 3$, then $\beta_a^{n_a} > \beta_r^{n_r}$.
\item[(ii)] If $n_r=2$ and $p_r -p_{a}\geq r-3$, then $\beta_a^{n_a} > \beta_r^{n_r}$.
\end{enumerate}
\end{corollary}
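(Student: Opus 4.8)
The plan is to deduce Corollary~\ref{cor-compa-3} directly from Proposition~\ref{compa-3} by specializing $b=r$ and checking that the numerical hypotheses of the proposition are implied by the hypotheses stated here. Since $a\in[r-1]$, we have $a<r$, and the hypothesis $2\phi\left(\frac{p_1p_2\cdots p_r}{p_a}\right)<\frac{p_1p_2\cdots p_r}{p_a}$ is exactly the standing hypothesis of Proposition~\ref{compa-3}. So in each case it remains only to verify the inequality involving $p_a$ and $p_r$.

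For part~(i), where $n_r\geq 3$, I would apply Proposition~\ref{compa-3}(i) with $b=r$; its hypothesis is $p_r^2(p_r-p_a)+2\phi(p_a)\geq (r-1)p_a$. Since $a<r$ we have $p_r-p_a\geq 1$, hence $p_r^2(p_r-p_a)\geq p_r^2$. Because $p_r\geq p_a+1\geq p_a$ and in fact $p_r$ is at least the $r$-th prime while $p_a\leq p_{r-1}$, one gets $p_r^2\geq (r-1)p_a$ comfortably (e.g., $p_r^2\geq p_r p_a\geq (r-1)p_a$ using $p_r\geq r-1$, which holds since the $r$-th prime is at least $r$); adding the nonnegative term $2\phi(p_a)$ only helps. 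So the hypothesis of Proposition~\ref{compa-3}(i) holds and $\beta_a^{n_a}>\beta_r^{n_r}$ follows.

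For part~(ii), where $n_r=2$ and additionally $p_r-p_a\geq r-3$, I would apply Proposition~\ref{compa-3}(ii) with $b=r$; its hypothesis is $p_r(p_r-p_a)+2\phi(p_a)\geq (r-1)p_a$. Here I would use the extra assumption $p_r-p_a\geq r-3$ together with $p_r\geq p_a$ to write $p_r(p_r-p_a)\geq p_a(r-3)=(r-1)p_a-2p_a$, so that $p_r(p_r-p_a)+2\phi(p_a)\geq (r-1)p_a-2p_a+2\phi(p_a)=(r-1)p_a-2(p_a-\phi(p_a))=(r-1)p_a-2$. This is off by a constant, so the estimate needs to be slightly sharpened: since $p_a-\phi(p_a)=1$ only when $p_a$... actually $p_a-\phi(p_a)=1$ always for a prime, so $2(p_a-\phi(p_a))=2$ and I instead note $p_r\geq p_a+1$ gives $p_r(p_r-p_a)\geq (p_a+1)(r-3)=(r-1)p_a-2p_a+r-3$, whence the left side is at least $(r-1)p_a+(r-3)-2p_a+2\phi(p_a)=(r-1)p_a+(r-3)$, which is $\geq(r-1)p_a$ since $r\geq 3$. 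Thus the hypothesis of Proposition~\ref{compa-3}(ii) is met and $\beta_a^{n_a}>\beta_r^{n_r}$.

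The main obstacle is purely the bookkeeping in part~(ii): the condition $p_r-p_a\geq r-3$ is exactly the borderline needed to recover $p_r(p_r-p_a)+2\phi(p_a)\geq(r-1)p_a$, so one must be careful to exploit $p_r\geq p_a+1$ (strict, since $a<r$) rather than merely $p_r\geq p_a$, and to use $r\geq 3$ to absorb the leftover constant; no deep idea is involved beyond invoking Proposition~\ref{compa-3} with the right parameters.
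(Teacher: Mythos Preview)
Your overall strategy---specialize Proposition~\ref{compa-3} to $b=r$ and verify its numerical hypothesis---is exactly what the paper does, and your treatment of part~(i) is correct.

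There is, however, an arithmetic slip in part~(ii). You write
\[
(r-1)p_a+(r-3)-2p_a+2\phi(p_a)=(r-1)p_a+(r-3),
\]
implicitly using $-2p_a+2\phi(p_a)=0$. But $\phi(p_a)=p_a-1$, so $-2p_a+2\phi(p_a)=-2$, and the correct lower bound is $(r-1)p_a+(r-5)$, not $(r-1)p_a+(r-3)$. Consequently your estimate $p_r(p_r-p_a)+2\phi(p_a)\geq (r-1)p_a$ only goes through for $r\geq 5$, whereas the corollary is stated (and used) for $r\geq 3$.

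The fix is to sharpen $p_r\geq p_a+1$ to $p_r\geq p_a+2$: if $p_a\geq 3$ then $p_a$ and $p_r$ are both odd, so their difference is at least $2$; if $p_a=2$ then $a=1$ and (since $r\geq 3$) $p_r\geq p_3\geq 5$. With $p_r\geq p_a+2$ and $p_r-p_a\geq r-3$ one gets, for $r\geq 4$,
\[
p_r(p_r-p_a)+2\phi(p_a)\geq (p_a+2)(r-3)+2(p_a-1)=(r-1)p_a+2r-8\geq (r-1)p_a,
\]
which is precisely the paper's computation. The case $r=3$ needs to be handled separately (here $r-3=0$ kills the product bound), e.g.\ via $p_r(p_r-p_a)\geq 2p_r>2p_a=(r-1)p_a$.
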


\begin{proof}
Take $b=r$ in Proposition \ref{compa-3}. We have $p_{r}^2 (p_r -p_a)+2\phi(p_a) > p_r^2 > (r-1)p_a$. So (i) follows from Proposition \ref{compa-3}(i).

If $r=3$, then $p_r -p_{a}\geq 2$ and so $p_r(p_r -p_{a})+2\phi(p_{a}) > 2p_r > 2p_a =(r-1)p_a$. If $r\geq 4$ and $p_r -p_{a}\geq r-3$, then $p_r(p_r -p_{a})+2\phi(p_{a})\geq (p_a+2)(r-3)+2(p_a-1)\geq (r-1)p_a$. So (ii) follows from Proposition \ref{compa-3}(ii).
\end{proof}

\section{\bf The upper bounds $\theta_{a,b}^{s,t}$ for $\kappa(\mathcal{P}(C_n))$}

In this section, we consider $r\geq 3$. Let $a,b\in [r]$ with $a\neq b$, $s\in [n_a]$ and $t\in [n_b]$. We first determine the size of the cut-set $X_{a,b}^{s,t}$ of $\mathcal{P}(C_n)$.
Define the integer $\theta_{a,b}^{s,t}$ by
\begin{equation}\label{eqn-6}
	\theta_{a,b}^{s,t}:=\left\{\begin{array}{ll}
		\phi(n)+\frac{n}{p_1p_2\cdots p_r}\cdot v_1, & \text{if } s=n_a,\; t=n_b\\
		\phi(n)+\frac{n}{p_1p_2\cdots p_r}\cdot v_2, & \text{if } s=n_a,\; t<n_b\\
		\phi(n)+\frac{n}{p_1p_2\cdots p_r}\cdot v_3, & \text{if } s<n_a,\; t=n_b\\
		\phi(n)+\frac{n}{p_1p_2\cdots p_r}\cdot v_4, & \text{if } s<n_a,\; t<n_b,\\
	\end{array}\right.
\end{equation}
where
\begin{equation*}
	\begin{aligned}
		&v_1=\phi\left(\frac{p_1p_2\cdots p_r}{p_ap_b}\right)\left(1-\frac{2}{p_a^{n_a-1}p_b^{n_b-1}}\right)+\phi\left(\frac{p_1p_2\cdots p_r}{p_a}\right)\\
        &\hspace{5.5cm} +\phi\left(\frac{p_1p_2\cdots p_r}{p_b}\right)+\frac{p_1p_2\cdots p_r}{p_a^{n_a}p_b^{n_b}},\\
		&v_2=\phi\left(\frac{p_1p_2\cdots p_r}{p_ap_b}\right)\left(1-\frac{1}{p_b^{t}}\right)+\phi\left(\frac{p_1p_2\cdots p_r}{p_a}\right)\left(1-
		\frac{2}{p_a^{n_a-1}p_b^{t}}\right)\\
		&\hspace{4.5cm}+\phi\left(\frac{p_1p_2\cdots p_r}{p_b}\right)\left(1-\frac{1}{p_b^{t}}\right)+\frac{p_1p_2\cdots p_r}{p_a^{n_a}p_b^{t}},\\
		&v_3=\phi\left(\frac{p_1p_2\cdots p_r}{p_ap_b}\right)\left(1-\frac{1}{p_a^{s}}\right)+\phi\left(\frac{p_1p_2\cdots p_r}{p_a}\right)\left(1-\frac{1}{p_a^{s}}\right)\\
		&\hspace{4.5cm}+\phi\left(\frac{p_1p_2\cdots p_r}{p_b}\right)\left(1-\frac{2}{p_a^{s}p_b^{n_b-1}}\right)+\frac{p_1p_2\cdots p_r}{p_a^{s}p_b^{n_b}},\\
		&v_4=\phi\left(\frac{p_1p_2\cdots p_r}{p_ap_b}\right)\left(1-\frac{1}{p_a^{s}}\right)\left(1-\frac{1}{p_b^{t}}\right)+\phi\left(\frac{p_1p_2\cdots p_r}{p_a}\right)\left(1-\frac{1}{p_a^{s}}\right)\\
		&\hspace{3cm}+\phi\left(\frac{p_1p_2\cdots p_r}{p_b}\right)\left(1-\frac{1}{p_b^{t}}\right)-\frac{2\phi(p_1p_2\cdots p_r)}{p_a^{s}p_b^{t}}+\frac{p_1p_2\cdots p_r}{p_a^{s}p_b^{t}}.\\
	\end{aligned}
\end{equation*}
We prove the following:

\begin{proposition}\label{thm-X-a-b-s-t}
$\left|X_{a,b}^{s,t}\right|=\theta_{a,b}^{s,t}$ for $a,b\in [r]$ with $a\neq b$, $s\in [n_a]$ and $t\in [n_b]$.
\end{proposition}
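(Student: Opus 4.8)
The plan is to compute $|X_{a,b}^{s,t}|$ directly from its definition $X_{a,b}^{s,t}=H_{a,b}^{s,t}\cup K_{a,b}^{s,t}$, treating the two pieces separately and then accounting for their overlap. Recall that $H_{a,b}^{s,t}$ is the disjoint union of the sets $E_{\frac{n}{p_a^i p_b^j}}$ over $0\le i\le s$, $0\le j\le t$ with $(i,j)\ne(s,t)$, and $K_{a,b}^{s,t}$ is the set of non-generators of the cyclic subgroup $S_{\frac{n}{p_a^s p_b^t}}$, so that $|K_{a,b}^{s,t}|=\frac{n}{p_a^s p_b^t}-\phi\!\left(\frac{n}{p_a^s p_b^t}\right)$. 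First I would observe that $H_{a,b}^{s,t}$ consists only of elements whose order is divisible by $p_a^s$ or by $p_b^t$ (since $(i,j)\ne(s,t)$ and $i\le s$, $j\le t$), whereas every element counted in $K_{a,b}^{s,t}$ is a non-generator of $S_{\frac{n}{p_a^s p_b^t}}$ and hence has order properly dividing $\frac{n}{p_a^s p_b^t}$, i.e. divisible by neither $p_a^s$ nor $p_b^t$; therefore $H_{a,b}^{s,t}$ and $K_{a,b}^{s,t}$ are disjoint and $|X_{a,b}^{s,t}|=|H_{a,b}^{s,t}|+|K_{a,b}^{s,t}|$.

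Next I would expand $|H_{a,b}^{s,t}|=\sum_{i=0}^{s}\sum_{j=0}^{t}\phi\!\left(\frac{n}{p_a^i p_b^j}\right)-\phi\!\left(\frac{n}{p_a^s p_b^t}\right)$ and split the double sum according to which of $i,j$ is zero. The $(i,j)=(0,0)$ term is $\phi(n)=|E_n|$. The terms with $i\ge 1$, $j=0$ sum to $\sum_{i=1}^{s}\phi\!\left(\frac{n}{p_a^i}\right)$, which Lemma \ref{lem-2.1} evaluates (with $k=1$) to either $\frac{n}{p_1p_2\cdots p_r}\phi\!\left(\frac{p_1p_2\cdots p_r}{p_a}\right)$ or that quantity times $(1-\frac{1}{p_a^s})$, according as $s=n_a$ or $s<n_a$; symmetrically the terms with $j\ge 1$, $i=0$ give the analogous expression in $b,t$. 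The terms with $i\ge 1$ and $j\ge 1$ sum to $\sum_{k=1}^{s}\sum_{l=1}^{t}\phi\!\left(\frac{n}{p_a^k p_b^l}\right)$, which is exactly the quantity evaluated in Lemma \ref{lem-2.3}, producing the factor $(1-\frac{1}{p_a^s})(1-\frac{1}{p_b^t})$-type expressions in each of the four cases. Finally the correction $-\phi\!\left(\frac{n}{p_a^s p_b^t}\right)$ can be rewritten as $-\frac{n}{p_1p_2\cdots p_r}\cdot\frac{1}{p_a^{s-1}p_b^{t-1}}\phi\!\left(\frac{p_1p_2\cdots p_r}{p_a p_b}\right)$ when $s=n_a,t=n_b$ (and with the obvious $p_a^{s}$ or $p_b^{t}$ in the denominator in the other cases, since then $p_a^{n_a-s}$ or $p_b^{n_b-t}$ contributes a factor), so that it merges with the double-sum term; and $|K_{a,b}^{s,t}|=\frac{n}{p_a^s p_b^t}-\phi\!\left(\frac{n}{p_a^s p_b^t}\right)$ can likewise be written with $\frac{n}{p_1p_2\cdots p_r}$ factored out, contributing the $\frac{p_1p_2\cdots p_r}{p_a^{s}p_b^{t}}$ term (or $\frac{p_1p_2\cdots p_r}{p_a^{n_a}p_b^{n_b}}$, etc.) together with a $-\phi(\cdots)$ term that combines with the double sum to yield the $-\frac{2}{p_a^{s}p_b^{t}}$-style coefficients appearing in $v_1,\dots,v_4$.

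I would organize the bookkeeping by the same four-case split ($s=n_a$ or not, $t=n_b$ or not) used in the definition of $\theta_{a,b}^{s,t}$ and in Lemmas \ref{lem-2.1} and \ref{lem-2.3}, handling $v_1$ (the case $s=n_a$, $t=n_b$) in full and indicating that the remaining three are entirely parallel. In each case the goal is to collect the coefficient of $\frac{n}{p_1p_2\cdots p_r}$ and check termwise that it matches the stated $v_k$: one contribution $\phi\!\left(\frac{p_1\cdots p_r}{p_a}\right)$ from the $i\ge1,j=0$ terms, one $\phi\!\left(\frac{p_1\cdots p_r}{p_b}\right)$ from $i=0,j\ge1$, a $\phi\!\left(\frac{p_1\cdots p_r}{p_a p_b}\right)$ from the genuinely two-index terms, the $\frac{p_1\cdots p_r}{p_a^{\bullet}p_b^{\bullet}}$ from $K_{a,b}^{s,t}$, and the merging of the two $\phi\!\left(\frac{p_1\cdots p_r}{p_a p_b}\right)$-type subtractions (one from $-\phi(\frac{n}{p_a^s p_b^t})$ in $H$, one from $-\phi(\frac{n}{p_a^s p_b^t})$ in $K$) into the $-\frac{2}{p_a^{\bullet}p_b^{\bullet}}$ coefficient. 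The main obstacle is purely clerical: keeping the powers $p_a^{s-1}$ versus $p_a^{s}$ (and the switch between them caused by the summation index starting at $0$ versus $1$, and by whether $s=n_a$) consistent across all four cases so that the two $\phi$-subtractions line up and produce exactly the "$-2$" in $v_1,v_2,v_3$ and the "$-2\phi(p_1\cdots p_r)$" in $v_4$; once the $\frac{n}{p_1p_2\cdots p_r}$ is uniformly factored out via the identity $\phi\!\left(\frac{n}{m}\right)=\frac{n}{p_1p_2\cdots p_r}\cdot\frac{p_1p_2\cdots p_r}{m}\cdot\frac{\phi(m')}{m'}$ (with $m'$ the squarefree part coprime structure), this reduces to a finite, mechanical verification with no genuine difficulty.
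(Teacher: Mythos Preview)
Your approach is essentially identical to the paper's: split $H_{a,b}^{s,t}$ into the $(0,0)$ term $E_n$, the two single-index sums handled by Lemma~\ref{lem-2.1}, and the double sum handled by Lemma~\ref{lem-2.3}, subtract $\phi\!\left(\frac{n}{p_a^s p_b^t}\right)$, add $|K_{a,b}^{s,t}|=\frac{n}{p_a^s p_b^t}-\phi\!\left(\frac{n}{p_a^s p_b^t}\right)$, and collect coefficients of $\frac{n}{p_1\cdots p_r}$ in the four cases --- exactly what the paper does. One small slip in your disjointness justification: elements of $H_{a,b}^{s,t}$ have order with $p_a$-part $p_a^{n_a-i}\ge p_a^{n_a-s}$ (and similarly for $p_b$), so the correct statement is that their order does \emph{not} divide $\frac{n}{p_a^s p_b^t}$ (equivalently, is divisible by $p_a^{n_a-s+1}$ or $p_b^{n_b-t+1}$), not that it is divisible by $p_a^s$ or $p_b^t$; the conclusion is unaffected, and the paper simply asserts the disjointness without proof.
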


\begin{proof}
Recall that $X_{a,b}^{s,t}=H_{a,b}^{s,t}\cup K_{a,b}^{s,t}$, where $K_{a,b}^{s,t}$ is the set of all the non-generators of the cyclic subgroup $S_{\frac{n}{p_a^s p_b^t}}$ of $C_n$ and $H_{a,b}^{s,t}$ is the union of the mutually disjoint sets $E_{\frac{n}{p_a^ip_b^j}}$ with $0\leq i\leq s$, $0\leq j\leq t$ and $(i,j)\neq (s,t)$. The set $E_n$ corresponds to $(i,j)=(0,0)$.
We have
$$\left|K_{a,b}^{s,t}\right|=\left| S_{\frac{n}{p_a^s p_b^t}}\right| - \phi\left(\frac{n}{p_a^{s}p_b^{t}}\right)=\frac{n}{p_a^{s}p_b^{t}}-\phi\left(\frac{n}{p_a^{s}p_b^{t}}\right).$$
The set $H_{a,b}^{s,t}$ can be written as
$$H_{a,b}^{s,t}=E_n\cup \left(\underset{i\in [s]}\cup E_{\frac{n}{p_a^i}}\right)\cup\left(\underset{j\in [t]}\cup E_{\frac{n}{p_b^j}}\right)\cup\left(\underset{k\in [s],\; l\in [t]}\cup E_{\frac{n}{p_a^kp_b^l}}\right)\setminus E_{\frac{n}{p_a^{s}p_b^{t}}}.$$
We have $|E_n|=\phi(n)$. Taking $k=1$ in Lemma \ref{lem-2.1}, we get
\begin{align*}
\left|\underset{i\in [s]}\cup E_{\frac{n}{p_a^i}}\right|=\underset{i=1}{\overset{s}{\sum}} \left\vert E_{\frac{n}{p_a^{i}}} \right\vert & = \left\{
\begin{array}{ll}
    \frac{n}{p_1p_2\cdots p_r}\cdot\phi\left(\frac{p_1p_2\cdots p_r}{p_a}\right), & \text{if } s=n_a\\
    \frac{n}{p_1p_2\cdots p_r}\cdot\phi\left(\frac{p_1p_2\cdots p_r}{p_a}\right)\left(1-\frac{1}{p_a^{s}}\right), & \text{if } s<n_a\\
\end{array}
\right.
\end{align*}
and
\begin{align*}
\left|\underset{j\in [t]}\cup E_{\frac{n}{p_b^j}}\right| =\underset{j=1}{\overset{t}{\sum}} \left\vert E_{\frac{n}{p_b^{j}}} \right\vert & = \left\{
\begin{array}{ll}
    \frac{n}{p_1p_2\cdots p_r}\cdot\phi\left(\frac{p_1p_2\cdots p_r}{p_b}\right), & \text{if } t=n_b\\
    \frac{n}{p_1p_2\cdots p_r}\cdot\phi\left(\frac{p_1p_2\cdots p_r}{p_b}\right)\left(1-\frac{1}{p_b^{t}}\right), & \text{if } t<n_b.
\end{array}
\right.
\end{align*}
By Lemma \ref{lem-2.3}, we have
\begin{align*}
\left|\underset{k\in [s],\; l\in [t]}\cup E_{\frac{n}{p_a^kp_b^l}}\right|
& =\overset{s}{\underset{k=1}\sum} \overset{t}{\underset{l=1}\sum}\left|E_{\frac{n}{p_a^kp_b^l}}\right|\\
& =\left\{\begin{array}{ll}
    \frac{n}{p_1p_2\cdots p_r}\cdot\phi\left(\frac{p_1p_2\cdots p_r}{p_ap_b}\right), & \text{if } s=n_a,\; t=n_b \\
    \frac{n}{p_1p_2\cdots p_r}\cdot\phi\left(\frac{p_1p_2\cdots p_r}{p_ap_b}\right)\left(1-\frac{1}{p_b^{t}}\right), & \text{if } s=n_a,\; t<n_b \\
    \frac{n}{p_1p_2\cdots p_r}\cdot\phi\left(\frac{p_1p_2\cdots p_r}{p_ap_b}\right)\left(1-\frac{1}{p_a^{s}}\right), & \text{if } s<n_a,\; t=n_b \\
    \frac{n}{p_1p_2\cdots p_r}\cdot\phi\left(\frac{p_1p_2\cdots p_r}{p_ap_b}\right)\left(1-\frac{1}{p_a^{s}}\right)\left(1-\frac{1}{p_b^{t}}\right), & \text{if } s<n_a,\; t<n_b.
\end{array}
\right.
\end{align*}
We also have
\begin{equation*}
\left|E_{\frac{n}{p_a^{s}p_b^{t}}}\right| = \phi\left(\frac{n}{p_a^{s}p_b^{t}}\right)=\left\{\begin{array}{ll}
    \frac{n}{p_1p_2\cdots p_r}\cdot\frac{1}{p_a^{s-1}p_b^{t-1}}\cdot\phi\left(\frac{p_1p_2\cdots p_r}{p_ap_b}\right), & \text{if } s=n_a,\; t=n_b \\
    \frac{n}{p_1p_2\cdots p_r}\cdot\frac{1}{p_a^{s-1}p_b^{t}}\cdot \phi\left(\frac{p_1p_2\cdots p_r}{p_a}\right), & \text{if } s=n_a,\; t<n_b\\
    \frac{n}{p_1p_2\cdots p_r}\cdot\frac{1}{p_a^{s}p_b^{t-1}}\cdot\phi\left(\frac{p_1p_2\cdots p_r}{p_b}\right), & \text{if } s<n_a,\; t=n_b \\
    \frac{n}{p_1p_2\cdots p_r}\cdot\frac{1}{p_a^{s}p_b^{t}}\cdot \phi(p_1p_2\cdots p_r), & \text{if } s<n_a,\; t<n_b.
   \end{array}\right.
\end{equation*}
With all the above information in hand, we can calculate the value of $\left|H_{a,b}^{s,t}\right|$ and find $\left|X_{a,b}^{s,t}\right|=\theta_{a,b}^{s,t}$ using the fact that the sets $H_{a,b}^{s,t}$ and $K_{a,b}^{s,t}$ are disjoint.
\end{proof}

\begin{corollary}\label{cor-X-a-b-s-t}
$\kappa(\mathcal{P}(C_n))\leq \theta_{a,b}^{s,t}=\left|X_{a,b}^{s,t}\right|$ for $a,b\in [r]$ with $a\neq b$, $s\in [n_a]$ and $t\in [n_b]$.
\end{corollary}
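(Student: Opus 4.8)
The statement to prove is Corollary \ref{cor-X-a-b-s-t}: $\kappa(\mathcal{P}(C_n)) \leq \theta_{a,b}^{s,t} = |X_{a,b}^{s,t}|$.

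This follows almost immediately from two facts already established in the excerpt. First, from the discussion preceding Proposition \ref{prop-Z-a-s} (citing \cite[Proposition 3.5]{MPS}), the set $X_{a,b}^{s,t}$ is a cut-set of $\mathcal{P}(C_n)$ for $r \geq 3$. Second, Proposition \ref{thm-X-a-b-s-t} (which immediately precedes the corollary) establishes that $|X_{a,b}^{s,t}| = \theta_{a,b}^{s,t}$. So the plan is: invoke that $X_{a,b}^{s,t}$ is a cut-set, hence by the definition of vertex connectivity given in the Introduction, $\kappa(\mathcal{P}(C_n))$ is at most the size of any cut-set, in particular at most $|X_{a,b}^{s,t}|$; then substitute the value of $|X_{a,b}^{s,t}|$ from Proposition \ref{thm-X-a-b-s-t} to conclude $\kappa(\mathcal{P}(C_n)) \leq \theta_{a,b}^{s,t}$.

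There is essentially no obstacle here — this is a one-line consequence of the preceding proposition together with the basic property that the vertex connectivity of a non-complete graph equals the minimum size over all cut-sets, so in particular it is bounded above by the size of the specific cut-set $X_{a,b}^{s,t}$. The only thing to be slightly careful about is noting that $\mathcal{P}(C_n)$ is not a complete graph in the regime $r \geq 3$ (indeed $r \geq 2$ suffices, since $\mathcal{P}(C_n)$ is complete iff $n$ is a prime power), so that the characterization $\kappa(\Gamma) = \min\{|Y| : Y \text{ a cut-set}\}$ applies. A clean proof is simply:

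\begin{proof}
Since $r\geq 3$, the graph $\mathcal{P}(C_n)$ is not complete, and by \cite[Proposition 3.5]{MPS} the set $X_{a,b}^{s,t}$ is a cut-set of $\mathcal{P}(C_n)$. Hence $\kappa(\mathcal{P}(C_n))\leq \left|X_{a,b}^{s,t}\right|$, and the result follows from Proposition \ref{thm-X-a-b-s-t}.
\end{proof}
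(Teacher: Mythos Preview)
Your proof is correct and matches the paper's approach exactly: the corollary is stated without proof in the paper, being an immediate consequence of Proposition~\ref{thm-X-a-b-s-t} together with the fact (cited from \cite[Proposition 3.5]{MPS}) that $X_{a,b}^{s,t}$ is a cut-set.
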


In the following proposition, for distinct $a,b\in [r]$ with $n_b\geq 2$, we compare the bounds $\beta_a^{n_a}$ and $\theta_{a,b}^{n_a,t}$ for varying $t\in [n_b -1]$.

\begin{proposition}\label{compa-X-a-b-s-t}
Let $a,b\in [r]$ with $a\neq b$ and $n_b\geq 2$. Put $\alpha:= \left(2+\frac{p_a^{n_a}-2}{p_b}\right)\phi\left(\frac{p_1p_2\cdots p_r}{p_ap_b}\right) -\frac{p_1p_2\cdots p_r}{p_ap_b}$. Then the following hold:
\begin{enumerate}
\item[(i)] If $\alpha=0$, then $\beta_a^{n_a} = \theta_{a,b}^{{n_a},1}=\theta_{a,b}^{{n_a},2}=\cdots =\theta_{a,b}^{{n_a},n_b-1}$.
\item[(ii)] If $\alpha <0$, then $\beta_a^{n_a} > \theta_{a,b}^{{n_a},1} > \theta_{a,b}^{{n_a},2} >\cdots >\theta_{a,b}^{{n_a},n_b-1}$.
\item[(iii)]  If $\alpha >0$, then $\beta_a^{n_a} < \theta_{a,b}^{{n_a},1} < \theta_{a,b}^{{n_a},2}<\cdots <\theta_{a,b}^{{n_a},n_b-1}$.
\end{enumerate}
\end{proposition}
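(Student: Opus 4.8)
The plan is to compute each $\theta_{a,b}^{n_a,t}$ for $t\in[n_b-1]$ via the formula for $v_2$ in \eqref{eqn-6} (this is the case $s=n_a$, $t<n_b$ with the roles of the prime indices arranged so that the full exponent $n_a$ occurs on $p_a$), treating $\theta_{a,b}^{n_a,t}$ as a function of the single variable $t$, and to compare it against $\beta_a^{n_a}$ from \eqref{eqn-5}. Writing $\theta_{a,b}^{n_a,t}=\phi(n)+\frac{n}{p_1p_2\cdots p_r}\cdot w_t$ and $\beta_a^{n_a}=\phi(n)+\frac{n}{p_1p_2\cdots p_r}\cdot u_a$, all the comparisons reduce to comparing the numbers $w_t$ and $u_a$ (since $\frac{n}{p_1p_2\cdots p_r}>0$). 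So first I would record these $w_t$ and $u_a$ explicitly.

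The key computational step is to collect, in $w_t$, the terms that depend on $t$: these are exactly the terms carrying a factor $\frac{1}{p_b^{t}}$. After grouping, one should find
$$w_t - w_{t-1} \;=\; \frac{1}{p_b^{t}}\cdot\left(1-\frac{1}{p_b}\right)\left[\left(2+\frac{p_a^{n_a}-2}{p_b}\right)\phi\!\left(\frac{p_1p_2\cdots p_r}{p_ap_b}\right)-\frac{p_1p_2\cdots p_r}{p_ap_b}\right]\cdot(\text{const})$$
up to a positive multiplicative constant; more precisely the bracketed quantity is exactly $\alpha$ (after absorbing the positive factors $\phi(p_b)$, powers of $p_b$, etc., coming from $\phi\big(\frac{p_1p_2\cdots p_r}{p_a}\big)=\phi(p_b)\phi\big(\frac{p_1p_2\cdots p_r}{p_ap_b}\big)$ and similar identities). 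The sign of $w_t-w_{t-1}$ is therefore the sign of $\alpha$, which immediately yields the monotone chains $\theta_{a,b}^{n_a,1}\lessgtr\theta_{a,b}^{n_a,2}\lessgtr\cdots$ in parts (i)--(iii), and equality throughout when $\alpha=0$. Similarly, comparing $w_1$ with $u_a$: the difference $u_a-w_1$ should again come out to be a positive multiple of $-\alpha$ (i.e. $\beta_a^{n_a}-\theta_{a,b}^{n_a,1}$ has the sign of $-\alpha$), using Lemma \ref{lem-2.1} with $s=n_a$ to rewrite the $\underset{i\in[n_a]}\cup E_{n/p_a^i}$-contribution inside $u_a$, and expanding $\phi\big(\frac{p_1p_2\cdots p_r}{p_a}\big)$, $\phi\big(\frac{p_1p_2\cdots p_r}{p_b}\big)$ in terms of $\phi\big(\frac{p_1p_2\cdots p_r}{p_ap_b}\big)$. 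Chaining this head-comparison with the monotone tail gives all three statements.

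Concretely, the steps in order are: (1) substitute $s=n_a$, $t<n_b$ into $v_2$ and simplify $\theta_{a,b}^{n_a,t}=\phi(n)+\frac{n}{p_1p_2\cdots p_r}w_t$; (2) isolate the $t$-dependence and show $w_t-w_{t-1}$ equals a positive constant times $\alpha\cdot p_b^{-t}$ (or, equivalently, directly show $w_{t}-w_{t-1}$ and $\alpha$ have the same sign), handling the telescoping so that part (i) follows when $\alpha=0$ and parts (ii), (iii) from the sign; (3) compute $u_a$ from \eqref{eqn-5} with $a$, $s=n_a$, rewrite $\beta_a^{n_a}-\theta_{a,b}^{n_a,1}=\frac{n}{p_1p_2\cdots p_r}(u_a-w_1)$ and show $u_a-w_1$ is a positive constant times $-\alpha$; (4) assemble: if $\alpha=0$ all quantities coincide; if $\alpha<0$ then $\beta_a^{n_a}>\theta_{a,b}^{n_a,1}$ and the chain is strictly decreasing; if $\alpha>0$ then $\beta_a^{n_a}<\theta_{a,b}^{n_a,1}$ and the chain is strictly increasing.

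The main obstacle is purely bookkeeping: correctly keeping track of which of the four formulas $v_1,\dots,v_4$ applies (here $v_2$, since $n_a$ sits on the $p_a$-slot with full exponent while the $p_b$-exponent $t$ is $<n_b$), and then cleanly factoring the messy expression for $w_t$ so that the common factor $\phi\big(\frac{p_1p_2\cdots p_r}{p_ap_b}\big)$ and the quantity $\frac{p_1p_2\cdots p_r}{p_ap_b}$ appear in exactly the combination defining $\alpha$. No inequality from Section 2 is needed beyond the trivial positivity of $\frac{n}{p_1p_2\cdots p_r}$ and $\big(1-\frac1{p_b}\big)$; the entire content is the algebraic identity expressing $w_t-w_{t-1}$ and $u_a-w_1$ in terms of $\alpha$. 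Once that identity is in hand, parts (i)--(iii) are immediate.
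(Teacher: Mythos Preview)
Your plan is correct and matches the paper's proof essentially line for line: the paper also writes $\theta_{a,b}^{n_a,t}=\phi(n)+\frac{n}{p_1p_2\cdots p_r}\cdot v_2$, computes the successive differences $\theta_{a,b}^{n_a,t+1}-\theta_{a,b}^{n_a,t}=\frac{n}{p_1p_2\cdots p_r}\bigl(\tfrac{1}{p_b^{t}}-\tfrac{1}{p_b^{t+1}}\bigr)\lambda$ and the head difference $\theta_{a,b}^{n_a,1}-\beta_a^{n_a}=\frac{n}{p_1p_2\cdots p_r}\bigl(1-\tfrac{1}{p_b}\bigr)\lambda$ with $\lambda=\frac{p_b}{p_a^{n_a-1}}\alpha$, and reads off (i)--(iii) from the sign of $\alpha$. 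The only superfluous remark in your outline is the appeal to Lemma~\ref{lem-2.1} for $u_a$: the closed form of $\beta_a^{n_a}$ is already given by \eqref{eqn-5}, so no sum needs to be re-evaluated.
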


\begin{proof}
If $n_b\geq 3$, then consider $t$ with $1\leq t<n_b-1$ (so that $t+1 < n_b$). Using (\ref{eqn-6}), we can calculate that $\theta_{a,b}^{n_a,t+1}-\theta_{a,b}^{n_a,t} =\frac{n}{p_1p_2\cdots p_r}\cdot\left(\frac{1}{p_b^t}-\frac{1}{p_b^{t+1}}\right)\cdot \lambda$ and $\theta_{a,b}^{n_a,1}-\beta_a^{n_a} =\frac{n}{p_1p_2\cdots p_r}\cdot\left(1-\frac{1}{p_b}\right)\cdot \lambda$,
where
\begin{align*}
\lambda & = \phi\left(\frac{p_1p_2\cdots p_r}{p_ap_b}\right)\left(1+\phi(p_a)+\frac{2\phi(p_b)}{p_a^{n_a-1}}\right)-\frac{p_1p_2\cdots p_r}{p_a^{n_a}}\\
&=\frac{p_b}{p_a^{n_a -1}}\cdot \left[\phi\left(\frac{p_1p_2\cdots p_r}{p_ap_b}\right)\left(2+\frac{p_a^{n_a}-2}{p_b}\right)-\frac{p_1p_2\cdots p_r}{p_ap_b}\right]=\frac{p_b}{p_a^{n_a -1}}\cdot\alpha.
\end{align*}
Note that $\lambda$ is independent of $t$. Further, $\lambda =0$ (respectively, $\lambda <0$, $\lambda >0$) if and only if $\alpha =0$ (respectively, $\alpha <0$, $\alpha >0$). Then (i), (ii) or (iii) holds according as $\alpha =0$, $\alpha <0$ or $\alpha >0$.
\end{proof}

%
%

\section{\bf Some useful results}

In this section, we consider $r\geq 4$. Let $X$ be a minimum cut-set of $\mathcal{P}(C_n)$. If $d$ is a positive divisor of $n$, then either $E_d$ is a subset of $X$ or $E_d$ is disjoint from $X$ by \cite[Lemma 2.3]{cps}. Define the subset $W(X)$ of $[r]$ by
$$W(X):= \left\{i\in [r]: E_{\frac{n}{p_i}}\subseteq X\right\}.$$
Then $|W(X)|\in\{0,1,2\}$ by \cite[Proposition 4.3]{MPS}.

\begin{proposition}\label{must-r}
Let $t\in [r]$ such that $p_{t}\geq r-1$. If $|W(X)|=2$ and $n_{t} \geq 2$, then $W(X)\cap \{t+1,t+2,\ldots,r\}\neq \emptyset$.
\end{proposition}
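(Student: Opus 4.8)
The plan is to argue by contradiction. Suppose $|W(X)|=2$, say $W(X)=\{a,b\}$ with $a<b$, but $W(X)\cap\{t+1,\dots,r\}=\emptyset$, so that $a<b\leq t$. By Proposition \ref{prop-Z-a-s} we have $\kappa(\mathcal{P}(C_n))\leq\beta_t^{n_t}$, and since $X$ is a minimum cut-set, $|X|=\kappa(\mathcal{P}(C_n))\leq\beta_t^{n_t}$. The whole argument will amount to contradicting this by producing a lower bound for $|X|$ that strictly exceeds $\beta_t^{n_t}$.

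For the lower bound, recall that every cut-set contains $E_n$, and that $E_{\frac{n}{p_a}},E_{\frac{n}{p_b}}\subseteq X$ by definition of $W(X)$; these three sets are pairwise disjoint (their elements have the three distinct orders $n,\frac{n}{p_a},\frac{n}{p_b}$), so $|X|\geq\phi(n)+\phi\!\left(\frac{n}{p_a}\right)+\phi\!\left(\frac{n}{p_b}\right)$. Since $a<t$ and $b\leq t$, inequality (\ref{eqn-1}) gives $\phi\!\left(\frac{n}{p_a}\right)\geq\phi\!\left(\frac{n}{p_t}\right)$ and $\phi\!\left(\frac{n}{p_b}\right)\geq\phi\!\left(\frac{n}{p_t}\right)$; the first is strict because equality in (\ref{eqn-1}) for the pair $(a,t)$ would require $n_t=1$, contrary to $n_t\geq 2$. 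Hence $|X|>\phi(n)+2\phi\!\left(\frac{n}{p_t}\right)$. (When this estimate needs to be sharpened I would further invoke \cite[Theorem 1.1]{MPS} to write $X=X_{a,b}^{u,v}$ for some $u\in[n_a]$, $v\in[n_b]$, and add in the contribution $\frac{n}{p_a^{u}p_b^{v}}-\phi\!\left(\frac{n}{p_a^{u}p_b^{v}}\right)$ of the non-generators of $S_{\frac{n}{p_a^{u}p_b^{v}}}$ that lie in $X$.)

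Next I would compare with $\beta_t^{n_t}$. Rewriting (\ref{eqn-5}) as $\beta_t^{n_t}=\phi(n)+\frac{n}{p_t^{n_t}}+\left(p_t^{n_t-1}-2\right)\phi\!\left(\frac{n}{p_t^{n_t}}\right)$ and using $\phi\!\left(\frac{n}{p_t}\right)=p_t^{n_t-2}(p_t-1)\,\phi\!\left(\frac{n}{p_t^{n_t}}\right)$ (valid because $n_t\geq 2$), the inequality $\phi(n)+2\phi\!\left(\frac{n}{p_t}\right)\geq\beta_t^{n_t}$ simplifies, after cancelling the positive factor $\phi\!\left(\frac{n}{p_t^{n_t}}\right)$, to
\[
\prod_{i\neq t}\frac{p_i}{p_i-1}\ \leq\ p_t^{\,n_t-2}(p_t-2)+2 .
\]
The left-hand side is $\dfrac{p_1p_2\cdots p_r/p_t}{\phi(p_1p_2\cdots p_r/p_t)}$, which is at most $r$ by (\ref{eqn-2}) applied to the $(r-1)$-element set $[r]\setminus\{t\}$, so it suffices to have $p_t^{\,n_t-2}(p_t-2)+2\geq r$. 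Since $p_t\geq r-1\geq 3$ this holds when $n_t\geq 3$ (the left side is then at least $(p_t-1)^2+1\geq(r-2)^2+1\geq r$ for $r\geq 4$) and also when $n_t=2$ and $p_t\geq r$. The only remaining case is $n_t=2$ with $p_t=r-1$ (so $r-1$ is prime), where the estimate is genuinely tight; there I would either use the non-generator term from the previous paragraph, or prove the modest sharpening $(r-1)\,\phi(p_1p_2\cdots p_r/p_t)\geq p_1p_2\cdots p_r/p_t$ directly, exploiting that the primes dividing $p_1p_2\cdots p_r/p_t$ are $r-1$ distinct primes none of which equals $r-1$.

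The threshold case $n_t=2$, $p_t=r-1$ is the one place where some care is required; everything else is a routine assembly of (\ref{eqn-1}), (\ref{eqn-2}), (\ref{eqn-5}) and Proposition \ref{prop-Z-a-s}. I would also note that if $t=r$ then $\{t+1,\dots,r\}=\emptyset$ and the same argument just shows the stated hypotheses cannot hold simultaneously, in agreement with \cite[Proposition 4.6]{cps-2}, so nothing is lost in that degenerate situation.
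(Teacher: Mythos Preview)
Your strategy is exactly that of the paper: assume $W(X)=\{a,b\}$ with $a<b\leq t$, bound $|X|$ from below by $|E_n|+|E_{n/p_a}|+|E_{n/p_b}|$, and contradict Proposition~\ref{prop-Z-a-s} by exceeding $\beta_t^{n_t}$. The one substantive difference is which inequality you feed into the $a$-term, and that is precisely where your borderline case comes from.

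You estimate both terms with (\ref{eqn-1}), obtaining $|E_{n/p_a}|+|E_{n/p_b}|>2\phi(n/p_t)=\dfrac{n}{p_1\cdots p_r}\cdot\dfrac{2\phi(p_1\cdots p_r)}{p_t}$. The paper instead uses (\ref{eqn-0}) for the $a$-term (note $a<t$ is automatic once $b\leq t$), which gives the strictly larger bound
\[
\phi\!\left(\tfrac{n}{p_a}\right)\ \geq\ p_t^{\,n_t-1}\phi\!\left(\tfrac{n}{p_t^{n_t}}\right)\ =\ \frac{n}{p_1\cdots p_r}\cdot\phi\!\left(\tfrac{p_1\cdots p_r}{p_t}\right),
\]
while keeping (\ref{eqn-1}) for the $b$-term. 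The resulting comparison with $\beta_t^{n_t}$ reduces to
\[
\bigl[\phi(p_t^{\,n_t-1})+2\bigr]\,\phi\!\left(\tfrac{p_1\cdots p_r}{p_t}\right)\ >\ \tfrac{p_1\cdots p_r}{p_t},
\]
and since $n_t\geq 2$ and $p_t\geq r-1$ one has $\phi(p_t^{\,n_t-1})+2\geq (p_t-1)+2=p_t+1\geq r$, so (\ref{eqn-2}) on the $(r-1)$-element set $[r]\setminus\{t\}$ finishes the job uniformly (strict for $r\geq 4$). No case split is needed.

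In your version the needed inequality is $\bigl[p_t^{\,n_t-2}(p_t-2)+2\bigr]\phi(\tfrac{p_1\cdots p_r}{p_t})\geq\tfrac{p_1\cdots p_r}{p_t}$, and the bracket equals $p_t$ when $n_t=2$; with only (\ref{eqn-2}) you get the bracket $\geq r$ is needed, leaving the case $p_t=r-1$ open. Your proposed patches (adding the non-generator contribution of $X_{a,b}^{u,v}$, or proving the sharpened bound $(r-1)\phi(\tfrac{p_1\cdots p_r}{p_t})\geq\tfrac{p_1\cdots p_r}{p_t}$) are plausible but not carried out, and neither is as clean as simply swapping (\ref{eqn-1}) for (\ref{eqn-0}) on the $a$-term. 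That single change removes the gap and yields exactly the paper's argument.
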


\begin{proof}
Let $W(X)=\{a,b\}$ for some $a,b\in [r]$ with $a < b$. We show that $b > t$. If possible, suppose that $b\leq t$. Then $a < t$. Using inequalities (\ref{eqn-0}) and (\ref{eqn-1}), we get
$$\left\vert E_{\frac{n}{p_a}} \right\vert = \phi\left(\frac{n}{p_a}\right)\geq p_{t}^{n_{t}-1} \phi\left(\frac{n}{p_{t}^{n_{t}}}\right)=\frac{n}{p_1p_2\cdots p_r}\cdot \phi\left(\frac{p_1p_2\cdots p_{r}}{p_{t}} \right)$$
and
$$\left\vert E_{\frac{n}{p_b}} \right\vert = \phi\left(\frac{n}{p_b}\right)\geq \phi\left(\frac{n}{p_{t}}\right)= \frac{n}{p_1p_2\cdots p_r}\cdot \frac{\phi\left(p_1p_2\cdots p_{r} \right)}{p_{t}}.$$
We know that $\beta_{t}^{n_{t}}=\phi(n)+ \frac{n}{p_1p_2\cdots p_r}\cdot u_t$, where
$$u_t= \phi\left(\frac{p_1p_2\cdots p_{r}}{p_{t}} \right) + \frac{1}{p_{t}^{n_{t}-1}}\cdot \left[\frac{p_1p_2\cdots p_{r}}{p_{t}} -2\phi\left(\frac{p_1p_2\cdots p_{r}}{p_{t}} \right) \right].$$
Put $\lambda = \phi\left(\frac{p_1p_2\cdots p_{r}}{p_{t}} \right) + \frac{\phi\left(p_1p_2\cdots p_{r} \right)}{p_{t}} - u_t$. It can be calculated that
\begin{align*}
\lambda & = \frac{1}{p_{t}^{n_{t}-1}}\left[\phi\left(\frac{p_1p_2\cdots p_{r}}{p_{t}} \right) \left[\phi\left(p_{t}^{n_{t} -1}\right)+2\right]- \frac{p_1p_2\cdots p_{r}}{p_{t}} \right].
\end{align*}
Since $n_{t}\geq 2$, we have $\phi\left(p_{t}^{n_{t} -1}\right)+2 \geq \phi\left(p_{t}\right)+2=p_t +1\geq r$. This gives
$$\lambda \geq \frac{1}{p_t^{n_{t} -1}}\left[r\cdot\phi\left(\frac{p_1p_2\cdots p_{r}}{p_{t}} \right) - \frac{p_1p_2\cdots p_{r}}{p_{t}} \right] > 0,$$
where the last inequality follows from (\ref{eqn-2}) as $r\geq 4$. Therefore, $\phi\left(\frac{p_1p_2\cdots p_{r}}{p_{t}} \right) + \frac{\phi\left(p_1p_2\cdots p_{r} \right)}{p_{t}} > u_t$. Since $E_1$, $E_n$, $E_{\frac{n}{p_a}}$, $E_{\frac{n}{p_b}}$ are mutually disjoint and contained in $X$, we get
\begin{align*}
|X| > \left\vert E_n\right\vert + \left\vert E_{\frac{n}{p_a}}\right\vert + \left\vert E_{\frac{n}{p_b}}\right\vert & = \phi(n)+ \frac{n}{p_1p_2\cdots p_r}\cdot\left[\phi\left(\frac{p_1p_2\cdots p_{r}}{p_{t}} \right) + \frac{\phi\left(p_1p_2\cdots p_{r} \right)}{p_{t}} \right]\\
 & > \phi(n)+ \frac{n}{p_1p_2\cdots p_r}\cdot u_t= \beta_{t}^{n_{t}}.
\end{align*}
This gives $\kappa(\mathcal{P}(C_n)) = |X|>\beta_{t}^{n_{t}}$, a contradiction to Proposition \ref{prop-Z-a-s}.
\end{proof}

The following corollary follows from Proposition \ref{must-r}. Note that the $k$-th prime is greater than or equal to $2k-1$.

\begin{corollary}\label{cor-must-r}
The following hold:
\begin{enumerate}
\item[(i)] If $n_{r} \geq 2$, then $|W(X)|\leq 1$.
\item[(ii)] If $n_{r-1} \geq 2$ and $|W(X)|=2$, then $r\in W(X)$.
\end{enumerate}
\end{corollary}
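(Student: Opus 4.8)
The plan is to deduce Corollary \ref{cor-must-r} directly from Proposition \ref{must-r} by choosing the index $t$ appropriately and invoking the elementary lower bound $p_k\geq 2k-1$ for the $k$-th prime. Both parts proceed by contradiction, assuming $|W(X)|=2$ together with the given hypothesis on $n_{r}$ (resp. $n_{r-1}$), and then verifying that the numerical condition $p_t\geq r-1$ of Proposition \ref{must-r} holds for the relevant $t$, so that the conclusion $W(X)\cap\{t+1,\dots,r\}\neq\emptyset$ forces a contradiction.

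For part (i), take $t=r$. Since $p_r\geq 2r-1\geq r-1$ (using $r\geq 4$, though $r\geq 1$ suffices), the hypothesis $p_t\geq r-1$ of Proposition \ref{must-r} is met. Assuming $n_r\geq 2$ and $|W(X)|=2$, Proposition \ref{must-r} yields $W(X)\cap\{r+1,r+2,\dots,r\}=W(X)\cap\emptyset\neq\emptyset$, which is absurd. Hence $|W(X)|\leq 1$ whenever $n_r\geq 2$; recall $|W(X)|\in\{0,1,2\}$ by \cite[Proposition 4.3]{MPS}, so this is the desired statement.

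For part (ii), take $t=r-1$. Then $p_{r-1}\geq 2(r-1)-1=2r-3\geq r-1$, the last inequality holding because $r\geq 2$ (and certainly for $r\geq 4$). Assuming $n_{r-1}\geq 2$ and $|W(X)|=2$, Proposition \ref{must-r} gives $W(X)\cap\{r-1+1,\dots,r\}=W(X)\cap\{r\}\neq\emptyset$, i.e.\ $r\in W(X)$, which is exactly the claim.

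There is essentially no obstacle here: the corollary is a routine specialization of Proposition \ref{must-r}, and the only thing to check is the arithmetic inequality relating the $k$-th prime $p_k$ to $k$, namely $p_k\geq 2k-1$ (equivalently, the $k$-th prime is at least $2k-1$), which is a standard elementary fact and is exactly the remark preceding the corollary in the excerpt. One should simply state clearly which $t$ is used in each case and note that the conclusion of Proposition \ref{must-r} becomes vacuous ($W(X)\cap\emptyset$) when $t=r$ and becomes the single-index condition $r\in W(X)$ when $t=r-1$.
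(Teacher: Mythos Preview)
Your proposal is correct and follows exactly the approach the paper intends: the paper itself gives no detailed proof, merely noting that the corollary follows from Proposition~\ref{must-r} together with the fact that the $k$-th prime is at least $2k-1$, and you have faithfully spelled out the two specializations $t=r$ and $t=r-1$. The only minor point is terminological: $p_k$ here denotes the $k$-th smallest prime divisor of $n$, not the $k$-th prime overall, but since the former is at least the latter the inequality $p_k\geq 2k-1$ holds all the same, so this does not affect the argument.
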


In the following proposition, we recall a few facts from \cite[Propositions 5.1, 5.2, Theorem 1.1]{MPS} concerning the possible candidates for minimum cut-sets of $\mathcal{P}(C_n)$ depending on $|W(X)|$.

\begin{proposition}[\cite{MPS}]\label{facts}
The following hold:
\begin{enumerate}
\item[(i)] If $W(X)=\emptyset$, then $X=Z_r^1$ and so $\kappa(\mathcal{P}(C_n))=|X|=\beta_r^1$.
\item[(ii)] If $W(X)=\{a\}$ for some $a\in [r]$, then $n_a\geq 2$, $2\phi\left(\frac{p_1p_2\cdots p_r}{p_a}\right) < \frac{p_1p_2\cdots p_r}{p_a}$, $X=Z_a^{n_a}$ and so $\kappa(\mathcal{P}(C_n))=|X|=\beta_a^{n_a}$.
\item[(iii)] If $W(X)=\{a,b\}$ for some $a,b\in [r]$ with $a\neq b$, then $X=X_{a,b}^{s,t}$ for some $s\in [n_a]$ and $t\in [n_b]$.
\end{enumerate}
\end{proposition}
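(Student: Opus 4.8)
The three assertions are restatements of \cite[Propositions 5.1, 5.2 and Theorem 1.1]{MPS}, so nothing genuinely new needs to be proved; the plan below indicates the argument one would run to recover them.

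The first step is to pass to the divisor lattice. By \cite[Lemma 2.3]{cps}, for each divisor $d$ of $n$ either $E_d\subseteq X$ or $E_d\cap X=\emptyset$; hence the surviving vertex set of $\mathcal{P}(C_n)\setminus X$ is $\bigcup_{d\in D}E_d$, where $D:=\{d\mid n:\ E_d\cap X=\emptyset\}$, and $1,n\notin D$ since $E_1,E_n\subseteq X$. For distinct $d_1,d_2\in D$, a generator of $S_{d_1}$ is adjacent in $\mathcal{P}(C_n)$ to a generator of $S_{d_2}$ precisely when $d_1\mid d_2$ or $d_2\mid d_1$, while two generators of the same $S_d$ are adjacent; thus the connected components of $\mathcal{P}(C_n)\setminus X$ are exactly the connected components of the divisibility graph on $D$. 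Consequently $X$ is a cut-set if and only if this graph is disconnected, in which case $|X|=\sum_{d\mid n,\ d\notin D}\phi(d)$. So a minimum cut-set corresponds to a set $D$ of proper nontrivial divisors whose divisibility graph is disconnected and for which $\sum_{d\in D}\phi(d)$ is as large as possible.

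Next I would split into the three cases $|W(X)|\in\{0,1,2\}$ of \cite[Proposition 4.3]{MPS}, writing $W(X)=\{i\in[r]:\ \frac{n}{p_i}\notin D\}$ for the set of deleted maximal proper divisors. The governing observation is that two maximal proper divisors $\frac{n}{p_i},\frac{n}{p_j}$ lie in the same component of the divisibility graph on $D$ unless every common divisor of them that still lies in $D$ has been deleted, and iterating this confines the deleted divisors to a ``band'' separating one prime-power branch from the rest, or separating two branches. Concretely: (a) if $W(X)=\emptyset$ then all $\frac{n}{p_i}$ survive, the structural analysis shows $X=Z_a^1$ for some $a\in[r]$, and minimality then forces $a=r$ by Corollary \ref{cor-compa} (since $\beta_a^1>\beta_r^1\geq\kappa(\mathcal{P}(C_n))$ for $a<r$), giving $X=Z_r^1$ and $\kappa(\mathcal{P}(C_n))=\beta_r^1$. (b) If $W(X)=\{a\}$, the deleted divisor $\frac{n}{p_a}$ forces the cut to run along the entire $p_a$-tower and optimizing the remaining choices gives $X=Z_a^{n_a}$; here $n_a\geq 2$ is needed for $Z_a^{n_a}$ to contain $E_{\frac{n}{p_a}}$, while $2\phi\left(\frac{p_1p_2\cdots p_r}{p_a}\right)<\frac{p_1p_2\cdots p_r}{p_a}$ is forced, since otherwise Lemma \ref{comp-Z-a-s}(i) and Corollary \ref{cor-compa} give $|X|=\beta_a^{n_a}>\beta_a^1\geq\beta_r^1\geq\kappa(\mathcal{P}(C_n))$, a contradiction. (c) If $W(X)=\{a,b\}$, the two deleted maximal divisors force a cut running between the $p_a$- and $p_b$-branches, and assembling the non-generators of $S_{\frac{n}{p_a^sp_b^t}}$ that must be removed yields $X=X_{a,b}^{s,t}$ for suitable $s\in[n_a]$ and $t\in[n_b]$.

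The heart of the matter, and the step I expect to be hardest, is the claim hidden in (a)--(c) that a minimal $X$ must have \emph{exactly} one of these shapes, with no redundant and no missing $E_d$'s. This is an exchange/optimality argument on the divisor lattice: for any $D$ with disconnected divisibility graph one shows that modifying $D$ towards the divisor set underlying one of $Z_a^1$, $Z_a^{n_a}$, $X_{a,b}^{s,t}$ cannot decrease $\sum_{d\in D}\phi(d)$, the key inputs being the $\phi$-comparisons collected in Section 2, notably (\ref{eqn-1}) and (\ref{eqn-2}). As usual, the borderline case $p_1=2$, where several of those inequalities degenerate to equalities, must be handled by separate sub-cases, exactly as in \cite{MPS}.
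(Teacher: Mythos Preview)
Your opening sentence is exactly right and matches the paper's treatment: the paper does not prove this proposition at all but simply cites \cite[Propositions 5.1, 5.2, Theorem 1.1]{MPS} and moves on. Everything after your first sentence is extra---a plausible outline of how one might reconstruct the argument from \cite{MPS}, but not something the present paper does or that can be compared against it.
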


In Proposition \ref{facts}(iii), it is necessary that $n_r = 1$  by Corollary \ref{cor-must-r}(i). The following proposition is useful for us.

\begin{proposition}\label{n-t=1}
Let $t\in [r]$ such that $n_t=1$ and $3\phi\left(\frac{p_1p_2\cdots p_r}{p_t}\right) \geq \frac{p_1p_2\cdots p_r}{p_t}$. Then $|W(X)\cap [t]|\leq 1$.
\end{proposition}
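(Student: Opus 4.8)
Suppose for contradiction that $|W(X)\cap [t]|\geq 2$. Then in particular $|W(X)|=2$, say $W(X)=\{a,b\}$ with $a<b\leq t$, and by Proposition \ref{facts}(iii) we have $X=X_{a,b}^{s,t'}$ for some $s\in[n_a]$ and $t'\in[n_b]$. Since $E_1,E_n$ and all of $H_{a,b}^{s,t'}$ are disjoint and contained in $X$, the quickest route is to bound $|X|$ from below by just a few of these pieces: at minimum $X$ contains $E_n$, $E_{\frac{n}{p_a}}$ and $E_{\frac{n}{p_b}}$. The plan is to show that already $\left\vert E_n\right\vert + \left\vert E_{\frac{n}{p_a}}\right\vert + \left\vert E_{\frac{n}{p_b}}\right\vert$ exceeds $\beta_t^{1}$, which contradicts $\kappa(\mathcal{P}(C_n))=|X|\leq\beta_t^{1}$ from Proposition \ref{prop-Z-a-s}. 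This mirrors the argument of Proposition \ref{must-r}, but now with $n_t=1$ the bound $\beta_t^{1}$ is smaller (there is no $\frac{1}{p_t^{n_t-1}}$ factor helping us), so we will need the stronger hypothesis $3\phi\left(\frac{p_1p_2\cdots p_r}{p_t}\right)\geq \frac{p_1p_2\cdots p_r}{p_t}$.

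First I would record, using $a<t$ together with inequalities (\ref{eqn-0}) and (\ref{eqn-1}) (and $n_t=1$, so $p_t^{n_t-1}\phi\left(\frac{n}{p_t^{n_t}}\right)=\phi\left(\frac{n}{p_t}\right)$), that
$$\left\vert E_{\frac{n}{p_a}}\right\vert=\phi\left(\frac{n}{p_a}\right)\geq \phi\left(\frac{n}{p_t}\right)=\frac{n}{p_1p_2\cdots p_r}\cdot\frac{\phi(p_1p_2\cdots p_r)}{p_t},$$
and likewise $\left\vert E_{\frac{n}{p_b}}\right\vert\geq \frac{n}{p_1p_2\cdots p_r}\cdot\frac{\phi(p_1p_2\cdots p_r)}{p_t}$ since $b<t$ as well (here one needs $b\neq t$; if $b=t$ then $W(X)\cap[t]=\{a,b\}$ still but the second inequality should be replaced by $\left\vert E_{\frac{n}{p_b}}\right\vert=\phi\left(\frac{n}{p_t}\right)$, which is the same bound). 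Meanwhile $\left\vert E_n\right\vert=\phi(n)=\frac{n}{p_1p_2\cdots p_r}\cdot\phi(p_1p_2\cdots p_r)$, and since $n_t=1$ the upper bound $\beta_t^{1}$ from (\ref{eqn-5}) is $\beta_t^{1}=\phi(n)+\frac{n}{p_1p_2\cdots p_r}\cdot u_t$ with $u_t=\frac{p_1p_2\cdots p_r}{p_t}-\phi\left(\frac{p_1p_2\cdots p_r}{p_t}\right)$.

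The crux is then a single inequality: we must show
$$\phi(p_1p_2\cdots p_r)+\frac{2\phi(p_1p_2\cdots p_r)}{p_t}> \frac{p_1p_2\cdots p_r}{p_t}-\phi\left(\frac{p_1p_2\cdots p_r}{p_t}\right),$$
i.e. (multiplying through and writing $\phi(p_1p_2\cdots p_r)=\phi(p_t)\phi\left(\frac{p_1p_2\cdots p_r}{p_t}\right)$ and $\phi(p_t)=p_t-1$)
$$\left(p_t+1\right)\phi\left(\frac{p_1p_2\cdots p_r}{p_t}\right)>\frac{p_1p_2\cdots p_r}{p_t}-\phi\left(\frac{p_1p_2\cdots p_r}{p_t}\right),$$
that is $\left(p_t+2\right)\phi\left(\frac{p_1p_2\cdots p_r}{p_t}\right)>\frac{p_1p_2\cdots p_r}{p_t}$. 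Since $p_t\geq 2$, the hypothesis $3\phi\left(\frac{p_1p_2\cdots p_r}{p_t}\right)\geq\frac{p_1p_2\cdots p_r}{p_t}$ gives $\left(p_t+2\right)\phi\left(\frac{p_1p_2\cdots p_r}{p_t}\right)\geq 4\phi\left(\frac{p_1p_2\cdots p_r}{p_t}\right)\geq \frac{4}{3}\cdot\frac{p_1p_2\cdots p_r}{p_t}>\frac{p_1p_2\cdots p_r}{p_t}$, as required. Putting the pieces together, $|X|\geq \left\vert E_n\right\vert+\left\vert E_{\frac{n}{p_a}}\right\vert+\left\vert E_{\frac{n}{p_b}}\right\vert>\phi(n)+\frac{n}{p_1p_2\cdots p_r}u_t=\beta_t^{1}$, contradicting Proposition \ref{prop-Z-a-s}. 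The only real subtlety, and the step I would double-check most carefully, is the edge case $b=t$ in the decomposition $W(X)=\{a,b\}$: one must confirm that the displayed lower bound for $\left\vert E_{\frac{n}{p_b}}\right\vert$ still holds (it does, with equality, since then $\left\vert E_{\frac{n}{p_b}}\right\vert=\phi\left(\frac{n}{p_t}\right)$) so that the strict inequality is driven entirely by the $\left\vert E_{\frac{n}{p_a}}\right\vert$ term and the arithmetic above.
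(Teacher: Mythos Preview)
Your strategy is exactly the paper's: assume $a<b$ in $[t]$ both lie in $W(X)$, bound $|X|$ below by $|E_n|+\left|E_{\frac{n}{p_a}}\right|+\left|E_{\frac{n}{p_b}}\right|$, and derive $|X|>\beta_t^1$. Two arithmetic slips, however, leave a gap.

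First, the displayed formula for $\phi\left(\frac{n}{p_t}\right)$ is wrong: since $n_t=1$,
\[
\phi\left(\frac{n}{p_t}\right)=\frac{n}{p_1p_2\cdots p_r}\cdot\phi\left(\frac{p_1p_2\cdots p_r}{p_t}\right),
\]
not $\dfrac{n}{p_1p_2\cdots p_r}\cdot\dfrac{\phi(p_1p_2\cdots p_r)}{p_t}$; your expression is too small by the factor $(p_t-1)/p_t$. Second, and more damaging, in forming the ``crux'' you forgot that $\beta_t^1=\phi(n)+\frac{n}{p_1\cdots p_r}\,u_t$ already contains $\phi(n)$: the term $|E_n|=\phi(n)$ on the left must cancel with it, so the summand $\phi(p_1p_2\cdots p_r)$ on the left of your crux should not be there. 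With that spurious term removed but your understated lower bound retained, the genuine inequality to check would be $\bigl(3-\tfrac{2}{p_t}\bigr)\,\phi\left(\frac{p_1\cdots p_r}{p_t}\right)>\frac{p_1\cdots p_r}{p_t}$, which the hypothesis does \emph{not} guarantee. (Your intermediate simplification to $(p_t+1)\phi\left(\frac{p_1\cdots p_r}{p_t}\right)$ on the left is also off; the correct coefficient from your own expressions would be $(p_t-1)(p_t+2)/p_t$.)

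Fixing both slips, the crux collapses to $3\,\phi\left(\frac{p_1\cdots p_r}{p_t}\right)\geq\frac{p_1\cdots p_r}{p_t}$, i.e.\ the hypothesis itself with only a weak inequality; the strict inequality needed for the contradiction then comes from $|X|>|E_n|+\left|E_{\frac{n}{p_a}}\right|+\left|E_{\frac{n}{p_b}}\right|$, which holds because $E_1\subset X$ as well. That is precisely the paper's argument.
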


\begin{proof}
If possible, suppose that $W(X)=\{a,b\}$ for some $a,b\in [r]$ with $a < b\leq t$. Using inequality (\ref{eqn-1}), we have
$\left|E_{\frac{n}{p_a}}\right|=\phi\left(\frac{n}{p_a}\right)\geq\phi\left(\frac{n}{p_t}\right)$ and $\left|E_{\frac{n}{p_b}}\right|=\phi\left(\frac{n}{p_b}\right)\geq \phi\left(\frac{n}{p_t}\right)$. This gives
$\left|E_{\frac{n}{p_a}}\right|+ \left|E_{\frac{n}{p_b}}\right| \geq 2\phi\left(\frac{n}{p_t}\right)= \frac{n}{p_1p_2\cdots p_r}\cdot 2 \phi\left(\frac{p_1p_2\cdots p_r}{p_t}\right)$. As $n_t=1$, we know that
$$\beta_{t}^{1}= \phi(n)+ \frac{n}{p_1p_2\cdots p_r}\cdot \left[\frac{p_1p_2\cdots p_{r}}{p_{t}} -\phi\left(\frac{p_1p_2\cdots p_{r}}{p_{t}} \right) \right].$$
Since the mutually disjoint sets $E_1$, $E_n$, $E_{\frac{n}{p_a}}$ and $E_{\frac{n}{p_b}}$ are contained in $X$, we get
\begin{align*}
|X|-\beta_t^1 & > |E_n|+ \left|E_{\frac{n}{p_a}}\right| +\left|E_{\frac{n}{p_b}}\right|-\beta_t^1\\
 & \geq \frac{n}{p_1p_2\cdots p_r}\cdot \left[3\phi\left(\frac{p_1p_2\cdots p_r}{p_t}\right) - \frac{p_1p_2\cdots p_r}{p_t}\right] \geq 0.
\end{align*}
Then $\kappa(\mathcal{P}(C_n))=|X|>\beta_t^1$, a contradiction to Proposition \ref{prop-Z-a-s}.
\end{proof}

\noindent As a consequence of Proposition \ref{n-t=1}, we have the following:

\begin{corollary}\label{3_phi}
If $n_r=1$ and $3\phi(p_1p_2\cdots p_{r-1})\geq p_1p_2\cdots p_{r-1}$, then $|W(X)|\leq 1$.
\end{corollary}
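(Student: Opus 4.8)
The statement to prove is Corollary~\ref{3_phi}: if $n_r=1$ and $3\phi(p_1p_2\cdots p_{r-1})\geq p_1p_2\cdots p_{r-1}$, then $|W(X)|\leq 1$.

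The plan is to deduce this directly from Proposition~\ref{n-t=1} by choosing $t=r$. First I would observe that the hypotheses of Corollary~\ref{3_phi} are exactly the hypotheses of Proposition~\ref{n-t=1} with $t=r$: indeed $n_r=1$ is given, and since $\frac{p_1p_2\cdots p_r}{p_r}=p_1p_2\cdots p_{r-1}$, the condition $3\phi\left(\frac{p_1p_2\cdots p_r}{p_r}\right)\geq \frac{p_1p_2\cdots p_r}{p_r}$ is literally $3\phi(p_1p_2\cdots p_{r-1})\geq p_1p_2\cdots p_{r-1}$. Hence Proposition~\ref{n-t=1} applies with $t=r$ and yields $|W(X)\cap [r]|\leq 1$.

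The second and final step is to note that $W(X)\subseteq [r]$ by its very definition, so $W(X)\cap[r]=W(X)$, and therefore $|W(X)|\leq 1$. This completes the argument.

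There is essentially no obstacle here; the corollary is an immediate specialization of the preceding proposition, and the only thing to check carefully is the bookkeeping identity $\frac{p_1p_2\cdots p_r}{p_r}=p_1p_2\cdots p_{r-1}$ so that the two inequality conditions coincide. I would write the proof in two short sentences.

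\begin{proof}
Apply Proposition~\ref{n-t=1} with $t=r$. Since $n_r=1$ and $\frac{p_1p_2\cdots p_r}{p_r}=p_1p_2\cdots p_{r-1}$, the hypothesis $3\phi(p_1p_2\cdots p_{r-1})\geq p_1p_2\cdots p_{r-1}$ is precisely the condition $3\phi\left(\frac{p_1p_2\cdots p_r}{p_r}\right)\geq \frac{p_1p_2\cdots p_r}{p_r}$. Hence Proposition~\ref{n-t=1} gives $|W(X)\cap [r]|\leq 1$. As $W(X)\subseteq [r]$, we conclude $|W(X)|\leq 1$.
\end{proof}
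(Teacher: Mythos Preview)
Your proof is correct and follows exactly the approach the paper indicates: the corollary is stated as an immediate consequence of Proposition~\ref{n-t=1}, and specializing that proposition to $t=r$ together with $W(X)\subseteq [r]$ is precisely how one obtains it.
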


\section{\bf Proof of Theorem \ref{main-1}}\label{sec-main-1}

In this section, we consider $r\geq 4$, $n_r\geq 2$ and $2\phi(p_1p_2\cdots p_{r-1}) < p_1p_2\cdots p_{r-1}$. Recall that
the subset $\Omega$ of $[r-1]$ is given by
$$\Omega=\left\{j\in [r-1]\mid n_j\geq 3,\; p_r-p_j \leq r-4,\; 2\phi\left(\frac{p_1p_2\cdots p_{r}}{p_j}\right) < \frac{p_1p_2\cdots p_{r}}{p_j}\right\}.$$

\begin{proof}[{\bf Proof of Theorem \ref{main-1}}]
Let $X$ be a minimum cut-set of $\mathcal{P}(C_n)$. As $n_r\geq 2$, we have $|W(X)|\in\{0,1\}$ by Corollary \ref{cor-must-r}(i). Since $2\phi(p_1p_2\cdots p_{r-1}) < p_1p_2\cdots p_{r-1}$, we have $\beta_r^1 > \beta_r^{n_r}\geq \kappa(\mathcal{P}(C_n))$ by Lemma \ref{comp-Z-a-s}(ii) and  Proposition \ref{prop-Z-a-s}. It then follows from Proposition \ref{facts}(i) that $|W(X)|\neq 0$.

Therefore, $|W(X)|=1$. Then, by Proposition \ref{facts}(ii), we have $X=Z_a^{n_a}$ for some $a\in [r]$ with $n_a\geq 2$ and $2\phi\left(\frac{p_1p_2\cdots p_r}{p_a}\right) < \frac{p_1p_2\cdots p_r}{p_a}$. Hence $\kappa(\mathcal{P}(C_n))=|X|=|Z_a^{n_a}|=\beta_a^{n_a}$.

Suppose that $a<r$. If $n_r\geq 3$, then $\kappa(\mathcal{P}(C_n))=\beta_a^{n_a}>\beta_r^{n_r}$ by Corollary \ref{cor-compa-3}(i), which is not possible by Proposition \ref{prop-Z-a-s}. Hence $n_r=2$. For completing the proof of Theorem \ref{main-1}, we need to prove that: $a\in\Omega$ and $a$ is the largest integer in $\Omega$.

We first show that $n_a\geq 3$ and $p_r-p_a \leq r-4$, so that $a$ would be in $\Omega$. If $n_a = 2$, then $p_a^{n_a -1}=p_a < p_r=p_r^{n_r -1}$ and so $\kappa(\mathcal{P}(C_n))=\beta_a^{n_a}>\beta_r^{n_r}$ by Lemma \ref{compa-2}. If $p_r -p_a\geq r-3$, then $\kappa(\mathcal{P}(C_n))=\beta_a^{n_a}>\beta_r^{n_r}$ by Corollary \ref{cor-compa-3}(ii). In both cases, we get a contradiction to Proposition \ref{prop-Z-a-s}. Hence $a\in\Omega$.

We now show that $a$ is the largest integer in $\Omega$. If possible, suppose that there exists $b\in\Omega$ with $a < b$. Then $n_b\geq 3$ and $p_r-p_b\leq r-4$. Since $p_r\geq 2r-1$, we get $p_b\geq p_r-(r-4)\geq r+3 > r-1$. So  $p_{b}^2 (p_b -p_a)+2\phi(p_a)>p_b^2 > (r-1)p_a$. Then, by Proposition \ref{compa-3}(i), we get $\kappa(\mathcal{P}(C_n))=\beta_a^{n_a} > \beta_b^{n_b}$, contradicting Proposition \ref{prop-Z-a-s}. Hence $a$ is the largest integer in $\Omega$. This completes the proof.
\end{proof}

In the following example, we provide integers $n$ with $r=6$, $n_r=2$ and $\Omega\neq \emptyset$ such that each of the possibilities for $\kappa(\mathcal{P}(C_n))$ in Theorem \ref{main-1}(ii) occurs.

\begin{example}\label{example}
Let $n=2^{n_1} \cdot 3^{n_2} \cdot 5^{n_3}  \cdot 7^{n_4} \cdot 11^{n_5} \cdot 13^{2}$ with $n_5\geq 3$. Here $r=6$, $p_1=2$, $p_2=3$, $p_3=5$, $p_4=7$, $p_5=11$, $p_6=13$ and $n_r=n_6=2$. As $p_1=2$, we have $2\phi\left(\frac{p_1p_2\cdots p_6}{p_5}\right) < \frac{p_1p_2\cdots p_6}{p_5}$. Since $p_r-p_5=2=r-4$ and $n_5\geq 3$, we get $\Omega = \{ 5 \}$. By Theorem \ref{main-1}(ii), we know that $\kappa(\mathcal{P}(C_n))\in\{\beta_5^{n_5},\beta_6^ 2\}$. Put $\omega:=p_1p_2\cdots p_6$. For $j\in\{5,6\}$, we have $\beta_j^{n_j}=\phi(n)+\frac{n}{\omega}\cdot u_j$, where
\begin{align*}
u_6 & =\frac{1}{13}\cdot \left[\frac{\omega}{13} + (13- 2)\phi\left(\frac{\omega}{13}\right)\right]=\frac{7590}{13},\\
u_5 &= \frac{1}{11^{n_5-1}}\cdot \left[\frac{\omega}{11}-2\phi\left(\frac{\omega}{11}\right)\right]+\phi\left(\frac{\omega}{11}\right)=\frac{1}{11^{n_5-1}}\cdot 1578 + 576.
\end{align*}
If $n_5=3$, then $u_5=\frac{1578}{121}+576 > \frac{7590}{13}=u_6$. If $n_5\geq 4$, then $u_5\leq \frac{1578}{1331}+576 < \frac{7590}{13}=u_6$. It then follows that $\beta_6^ 2\neq \beta_5^{n_5}$, and that $\kappa(\mathcal{P}(C_n))=\beta_6^ 2$ if $n_5=3$ and $\kappa(\mathcal{P}(C_n))=\beta_5^{n_5}$ if $n_5\geq 4$. This implies that if $X$ is a minimum cut-set of $\mathcal{P}(C_n)$, then $X=Z_6^2$ or $Z_5^{n_5}$ according as $n_5=3$ or $n_5\geq 4$.
\end{example}

\section{\bf Proof of Theorem \ref{main-2}}\label{sec-main-2}

In this section, we consider $r\in \{4,5\}$, $n_r=1$, $2\phi(p_1p_2\cdots p_{r-1}) < p_1p_2\cdots p_{r-1}$ and prove Theorem \ref{main-2}.
If $p_1\geq 5$, then the inequality (\ref{eqn-1-1}) gives that $2\phi(p_1p_2p_3) > p_1p_2p_3$ when $r=4$, and $2\phi(p_1p_2p_3p_4) > p_1p_2p_3p_4$ when $r=5$, which would contradict our assumption. Therefore, $p_1\in\{2,3\}$.

\subsection{The case $r=4$}

When $r=4$ and $n_r=1$, we first prove that $|W(X)|\leq 1$ for any minimum cut-set $X$ of $\mathcal{P}(C_n)$. We need the following lemma when $p_1=2$ and $p_2=3$.

\begin{lemma}\label{lem:r=4}
Let $n=p_1^{n_1}p_2^{n_2}p_3^{n_3}p_4$ with $p_1=2$, $p_2=3$, and $X$ be a minimum cut-set of $\mathcal{P}(C_n)$. If $|W(X)|=2$, then $4\in W(X)$.
\end{lemma}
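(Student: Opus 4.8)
The plan is to argue by contradiction: assume $|W(X)|=2$ but $4\notin W(X)$, so that $W(X)=\{a,b\}$ with $a<b\le 3$. Since $p_1=2$ and $p_2=3$, the remaining primes satisfy $p_3\ge 5$ and (both being odd) $p_4\ge p_3+2$. The target is to show that this assumption forces $|X|>\beta_4^1$; since $|X|=\kappa(\mathcal{P}(C_n))\le\beta_4^1$ by Proposition~\ref{prop-Z-a-s} (applied with $a=r=4$ and $s=1$), that is the contradiction we want, and it yields $4\in W(X)$.

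First I would write down a lower bound for $|X|$. The four sets $E_1,E_n,E_{n/p_a},E_{n/p_b}$ are pairwise disjoint and all contained in $X$ — the first two because every cut-set of $\mathcal{P}(C_n)$ contains $E_1\cup E_n$, and the last two by the definition of $W(X)$ — so $|X|\ge 1+\phi(n)+\phi(n/p_a)+\phi(n/p_b)$. Next I would use the elementary estimate $\phi(n/p_j)\ge \frac{n}{p_j}\prod_{i=1}^{4}(1-1/p_i)=\frac{n}{p_1p_2p_3p_4}\cdot\frac{\phi(p_1p_2p_3p_4)}{p_j}$, which holds because the prime divisors of $n/p_j$ form a subset of $\{p_1,p_2,p_3,p_4\}$, whence $\phi(n/p_j)/(n/p_j)=\prod_{p\mid n/p_j}(1-1/p)\ge\prod_{i=1}^{4}(1-1/p_i)$. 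Because $a<b\le 3$ we have $\{p_a,p_b\}\subseteq\{2,3,p_3\}$, so $\tfrac1{p_a}+\tfrac1{p_b}\ge\tfrac13+\tfrac1{p_3}$ (the smallest value, attained for $\{p_a,p_b\}=\{3,p_3\}$), and using $\phi(p_1p_2p_3p_4)=2(p_3-1)(p_4-1)$ this gives $\phi(n/p_a)+\phi(n/p_b)\ge\frac{n}{6p_3p_4}\cdot\frac{2(p_3-1)(p_4-1)(p_3+3)}{3p_3}$. On the other hand, since $n_r=n_4=1$, formula~(\ref{eqn-5}) reads $\beta_4^1=\phi(n)+\frac{n}{6p_3p_4}(4p_3+2)$.

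Comparing these two expressions, the whole matter reduces to the numerical inequality $(p_3-1)(p_4-1)(p_3+3)>3p_3(2p_3+1)$. Using $p_4-1\ge p_3+1$ it is enough to verify $(p_3^2-1)(p_3+3)>3p_3(2p_3+1)$, i.e.\ $p_3^3-3p_3^2-4p_3-3>0$, which holds for every prime $p_3\ge 5$ (it equals $27$ at $p_3=5$ and the cubic is increasing on $[5,\infty)$). This yields $|X|>\beta_4^1$ and completes the argument. I expect the only subtle point to be the lower bound $\phi(n/p_j)\ge\frac{n}{p_1p_2p_3p_4}\cdot\frac{\phi(p_1p_2p_3p_4)}{p_j}$ — i.e.\ that dividing $n$ by a single $p_j$ cannot drop the ratio $\phi(m)/m$ below $\prod_i(1-1/p_i)$ — and everything after that is a routine estimate. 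It is worth noting that this argument needs no case split on $\{a,b\}$ and uses nothing about the exponents $n_1,n_2,n_3$ beyond their being positive, nor does it invoke Proposition~\ref{facts}(iii).
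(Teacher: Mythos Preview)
Your proof is correct. The paper's argument differs in structure: it splits into two cases according to whether $n_3\ge 2$ or $n_3=1$. When $n_3\ge 2$ it simply invokes Corollary~\ref{cor-must-r}(ii) (a consequence of Proposition~\ref{must-r}); when $n_3=1$ it uses inequality~(\ref{eqn-1}) to bound $\phi(n/p_a)+\phi(n/p_b)\ge 2\phi(n/p_3)=\frac{2n}{p_1p_2p_3p_4}\phi(p_1p_2p_4)$ exactly, and the resulting comparison with $\beta_4^1$ reduces to the linear estimate $4\phi(p_4)+2\phi(p_3)-6p_3\ge 2>0$ via $p_4\ge p_3+2$.

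Your route avoids the case split and the appeal to Corollary~\ref{cor-must-r} by using the uniform lower bound $\phi(n/p_j)\ge \frac{n}{p_1p_2p_3p_4}\cdot\frac{\phi(p_1p_2p_3p_4)}{p_j}$, valid regardless of $n_j$. The price is that, since you carry the extra factor $(1-1/p_j)$ when $n_j=1$, your final inequality becomes the cubic $p_3^3-3p_3^2-4p_3-3>0$ rather than a linear one; on the other hand you gain a self-contained argument that treats all $(a,b)$ and all exponents $n_1,n_2,n_3$ at once. Both approaches ultimately hinge on $p_4\ge p_3+2$.
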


\begin{proof}
Let $W(X)=\{k,l\}$ for some $k,l\in [4]$ with $k>l$. We claim that $k=4$. If $n_3\geq 2$, then the claim follows from Corollary \ref{cor-must-r}(ii) as $r=4$.

So assume that $n_3=1$. If possible, suppose that $l<k\leq 3$. Then, by inequality (\ref{eqn-1}), we have $|E_{\frac{n}{p_k}}|=\phi\left(\frac{n}{p_k}\right)\geq \phi\left(\frac{n}{p_3}\right)$ and $|E_{\frac{n}{p_l}}|=\phi\left(\frac{n}{p_l}\right)> \phi\left(\frac{n}{p_3}\right)$. Since $X$ contains the pair-wise disjoint sets $E_n$, $E_{\frac{n}{p_l}}$ and $E_{\frac{n}{p_k}}$, we get \begin{align*}
\kappa(\mathcal{P}(C_n))=|X| & \geq |E_n|+ |E_{\frac{n}{p_l}}|+|E_{\frac{n}{p_k}}|\\ & >\phi(n)+2\phi\left(\frac{n}{p_3}\right)= \phi(n)+ \frac{2n}{p_1p_2p_3p_4}\cdot \phi(p_1p_2p_4).
\end{align*}
Then, using (\ref{eqn-5}), we have
\begin{align*}
		\kappa(\mathcal{P}(C_n))-\beta_4^1 & >\frac{n}{p_1p_2p_3p_4}\cdot[2\phi(p_1p_2p_4)-p_1p_2p_3+\phi(p_1p_2p_3)]\\
		&=\frac{n}{p_1p_2p_3p_4}\cdot[4\phi(p_4)+2\phi(p_3)-6p_3]\\
		&\geq \frac{n}{p_1p_2p_3p_4}\cdot[4(p_3+1)+2(p_3-1)-6p_3]=\frac{2n}{p_1p_2p_3p_4}>0.
\end{align*}
This gives $\kappa(\mathcal{P}(C_n))> \beta_4^1$, a contradiction to Proposition \ref{prop-Z-a-s}. Therefore, $4\in W(X)$.
\end{proof}

\begin{proposition}\label{four_primes}
Let $r=4$, $n_4=1$ and $X$ be a minimum cut-set of $\mathcal{P}(C_n)$. Then  $|W(X)|\leq 1$.
\end{proposition}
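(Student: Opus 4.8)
The plan is to assume $|W(X)|=2$ and derive a contradiction with Proposition \ref{prop-Z-a-s}. Write $W(X)=\{a,b\}$ with $a<b$. First I would dispatch the case $(p_1,p_2)\neq(2,3)$: since $p_1<p_2<p_3$, this forces $p_1\geq 3$, or else $p_1=2$ and $p_2\geq 5$. In the first case Lemma \ref{p1_lower_bound}, applied to the three primes $p_1,p_2,p_3$, gives $3\phi(p_1p_2p_3)\geq p_1p_2p_3$; in the second case the same inequality follows from an elementary estimate using $p_2\geq 5$ and $p_3\geq 7$. Hence $3\phi(p_1p_2p_3)\geq p_1p_2p_3$, and since $n_4=n_r=1$, Corollary \ref{3_phi} gives $|W(X)|\leq 1$, a contradiction. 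So from here on $(p_1,p_2)=(2,3)$, and therefore $p_3\geq 5$, $p_4\geq 7$.

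Now Lemma \ref{lem:r=4} applies and yields $4\in W(X)$; thus $W(X)=\{a,4\}$ with $a\in[3]$, and by Proposition \ref{facts}(iii), $X=X_{a,4}^{s,1}$ for some $s\in[n_a]$ (here $t=n_4=1$). Suppose $a\in\{1,2\}$. Since $a,4\in W(X)$, the pairwise disjoint sets $E_n$, $E_{n/p_a}$, $E_{n/p_4}$ are all contained in $X$, so $|X|\geq\phi(n)+\phi(n/p_a)+\phi(n/p_4)$. Writing $\phi(n/p_a)$ and $\phi(n/p_4)$ as explicit multiples of $\frac{n}{p_1p_2p_3p_4}$ (these multiples depend only on $p_3,p_4$ and on whether $n_a=1$) and invoking the expression (\ref{eqn-5}) for $\beta_4^1$, the inequality $|X|>\beta_4^1$ reduces to an elementary inequality in $p_3\geq 5$, $p_4\geq 7$, which holds in every subcase. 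This contradicts Proposition \ref{prop-Z-a-s}; so necessarily $a=3$.

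The case $a=3$ is where the real work lies, and it is the main obstacle: the crude bound $\phi(n)+\phi(n/p_3)+\phi(n/p_4)$ no longer exceeds $\beta_4^1$ (for twin-prime pairs $p_4=p_3+2$ it falls just short of it), so one must use the exact cardinality $|X|=|X_{3,4}^{s,1}|=\theta_{3,4}^{s,1}$ from Proposition \ref{thm-X-a-b-s-t}. From the explicit formula (\ref{eqn-6}) — or, equivalently, from Proposition \ref{compa-X-a-b-s-t} applied via the identity $X_{3,4}^{s,1}=X_{4,3}^{1,s}$, together with a direct comparison of $\theta_{3,4}^{n_3,1}$ with $\theta_{3,4}^{n_3-1,1}$ — the sequence $\theta_{3,4}^{1,1},\theta_{3,4}^{2,1},\dots,\theta_{3,4}^{n_3,1}$ is nondecreasing, so it suffices to bound $\theta_{3,4}^{1,1}$. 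A direct computation with (\ref{eqn-6}) and (\ref{eqn-5}) shows $\theta_{3,4}^{1,1}>\beta_4^1$ unless $n_3\geq 2$ and $p_4-p_3<6$; in that remaining situation, since $\kappa(\mathcal{P}(C_n))\leq\beta_3^{n_3}=|Z_3^{n_3}|$ by Proposition \ref{prop-Z-a-s}, it is enough to verify $\theta_{3,4}^{1,1}>\beta_3^{n_3}$, which is again an elementary inequality in $p_3,p_4$ (using $n_3\geq 2$, hence $p_3^{n_3-1}\geq p_3$). In every case $|X|\geq\theta_{3,4}^{1,1}$ strictly exceeds an upper bound for $\kappa(\mathcal{P}(C_n))$, contradicting that $X$ is a minimum cut-set. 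Thus the crux is precisely this borderline subcase, where the naive vertex count is too weak and one is forced to use the exact size formula for $X_{3,4}^{s,1}$ and to compare against $\beta_3^{n_3}$ rather than $\beta_4^1$.
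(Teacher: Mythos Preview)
Your outline is correct and follows essentially the same strategy as the paper, though the case organization differs. Both arguments first dispose of $(p_1,p_2)\neq(2,3)$ via Corollary~\ref{3_phi}, then invoke Lemma~\ref{lem:r=4} to force $4\in W(X)$, and finally use the exact size formula $\theta_{l,4}^{s,1}$ together with Proposition~\ref{compa-X-a-b-s-t} (applied with $a=4$, $b=l$, $n_a=1$) to reach a contradiction, falling back on the bound $\beta_3^{n_3}$ in the twin-prime situation $p_4=p_3+2$.

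The main organizational difference is this: the paper treats all $l\in\{1,2,3\}$ uniformly, using $\alpha\geq 0$ to reduce to either $\kappa=\theta_{l,4}^{n_l,1}$ (shown $>\beta_4^1$ by a single computation valid for every $l$) or $\kappa=\beta_4^1$ with $\alpha=0$ (which forces $l=3$ and $p_4=p_3+2$, then $\beta_4^1>\beta_3^{n_3}$). You instead split off $a\in\{1,2\}$ with the crude three-term bound $|X|\geq\phi(n)+\phi(n/p_a)+\phi(n/p_4)$ and reserve the exact formula for $a=3$, where you bound $|X|$ from below by the \emph{smallest} term $\theta_{3,4}^{1,1}$ of the (nondecreasing) sequence rather than isolating the \emph{largest} term $\theta_{3,4}^{n_3,1}$ as the paper does. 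Both routes work; the paper's uniform treatment avoids the separate crude estimates, while your split makes the role of the borderline case $a=3$ more visible. One minor remark: your threshold ``$p_4-p_3<6$'' is broader than necessary --- the computation actually gives $\theta_{3,4}^{1,1}>\beta_4^1$ whenever $p_4>p_3+2$ (for $n_3\geq 2$), with equality precisely at $p_4=p_3+2$ --- but this only means you verify $\theta_{3,4}^{1,1}>\beta_3^{n_3}$ in a slightly larger range than needed, which is harmless.
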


\begin{proof}
If $p_1= 3$, then $3\phi(p_1p_2p_3) > p_1p_2p_3$ by Lemma \ref{p1_lower_bound} and so $|W(X)|\leq 1$ by Corollary \ref{3_phi}. If $p_1=2$ and $p_2\geq 5$, then $p_3\geq 7$ and so
\begin{align*}
\frac{\phi(p_1p_2p_3)}{p_1p_2p_3}=\left(1-\frac{1}{p_1}\right)\left(1-\frac{1}{p_2}\right)\left(1-\frac{1}{p_3}\right)
\geq\left(1-\frac{1}{2}\right)\left(1-\frac{1}{5}\right)\left(1-\frac{1}{7}\right)=\frac{12}{35}>\frac{1}{3}.
\end{align*}
This gives $3\phi(p_1p_2p_3)>p_1p_2p_3$ and hence $|W(X)|\leq 1$ by Corollary \ref{3_phi}. So consider that $p_1=2$ and $p_2=3$. Then $n=2^{n_1}3^{n_2}p_3^{n_3}p_4$. By Proposition \ref{facts}, we have $|W(X)|\leq 2$.

If possible, suppose that $|W(X)|=2$. Then $4\in W(X)$ by Lemma \ref{lem:r=4}. Let $W(X)=\{l,4\}$ for some $l\in[3]$. By Proposition \ref{facts}(iii), we have $X= X_{l,4}^{t,1}$ for some $t\in [n_l]$ and so $\kappa(\mathcal{P}(C_n))=|X|=|X_{l,4}^{t,1}|=\theta_{l,4}^{t,1}$.

We claim that $\kappa(\mathcal{P}(C_n))=\theta_{l,4}^{n_l,1}$ or $\kappa(\mathcal{P}(C_n))=\theta_{l,4}^{1,1}=\beta_4^1$. If $n_l=1$, then $t=n_l$ and so $\kappa(\mathcal{P}(C_n))=\theta_{l,4}^{n_l,1}$. Assume that $n_l\geq 2$. We have $\kappa(\mathcal{P}(C_n))\leq \beta_4^{1}$ by Proposition \ref{prop-Z-a-s}. Put $\alpha:= \left(2+\frac{p_4-2}{p_l}\right)\phi\left(\frac{p_1p_2p_3}{p_l}\right) -\frac{p_1p_2p_3}{p_l}$. Since $\frac{p_4-2}{p_l}\geq 1$, we have $\alpha\geq 0$ by inequality \eqref{eqn-2}. Then, applying Proposition \ref{compa-X-a-b-s-t} (in which take $a=4$, $b=l$ and $n_a=n_4=1$), the following must hold:
\begin{enumerate}
\item[(i)] if $\alpha >0$, then $\kappa(\mathcal{P}(C_n))=\theta_{l,4}^{n_l,1}$,
\item[(ii)] if $\alpha =0$, then $\kappa(\mathcal{P}(C_n))=\theta_{l,4}^{n_l,1}$ or $\kappa(\mathcal{P}(C_n))=\theta_{l,4}^{1,1}=\beta_4^1$.
\end{enumerate}
So the claim follows.\medskip

\noindent{\bf Case I.} $\kappa(\mathcal{P}(C_n))=\theta_{l,4}^{n_l,1}:$ Using (\ref{eqn-5}) and (\ref{eqn-6}), it can be calculated that
\begin{equation*}
	\begin{aligned}
\theta_{l,4}^{n_l,1}-\beta_4^1 &= \frac{n}{p_1p_2p_3p_4}\cdot\bigg[\left(1-\frac{2}{p_l^{n_l-1}}+\phi(p_4)+2\phi(p_l)\right)
\phi\left(\frac{p_1p_2p_3}{p_l}\right)
         -\left(p_l-\frac{1}{p_l^{n_l-1}}\right)\cdot\frac{p_1p_2p_3}{p_l}\bigg].\\
    \end{aligned}
\end{equation*}
Since $1-\frac{2}{p_l^{n_l-1}}+\phi(p_4)+2\phi(p_l) -3\left(p_l-\frac{1}{p_l^{n_l-1}}\right)=\frac{1}{p_l^{n_l-1}}+p_4-(p_l+2)>0$, we get
$$\theta_{l,4}^{n_l,1}-\beta_4^1 > \frac{n}{p_1p_2p_3p_4}\cdot \left(p_l-\frac{1}{p_l^{n_l-1}}\right)\left[3
\phi\left(\frac{p_1p_2p_3}{p_l}\right)-\frac{p_1p_2p_3}{p_l}\right]\geq 0,$$
where the last inequality follows using \eqref{eqn-2}. This gives $\kappa(\mathcal{P}(C_n))=\theta_{l,4}^{n_l,1}>\beta_4^1$, which contradicts Proposition \ref{prop-Z-a-s}.\medskip

\noindent{\bf Case II.} $\alpha =0$ and $\kappa(\mathcal{P}(C_n))=\theta_{l,4}^{1,1}=\beta_4^1:$
Since $p_1=2$ and $p_2=3$, we have $\alpha=0$ if and only if $l=3$ and $p_4=p_3+2$ by equality case of \eqref{eqn-2}. Now, for $l=3$ and $p_4=p_3+2$, we get using (\ref{eqn-5}) that
\begin{align*}
\beta_4^1- \beta_3^{n_3} & =  \frac{n}{p_1p_2p_3p_4}\cdot\left[p_1p_2p_3-\phi(p_1p_2p_3)-\phi(p_1p_2p_4)-\frac{1}{p_3^{n_3-1}}\cdot [p_1p_2p_4-2\phi(p_1p_2p_4)]\right]\\
 & = \frac{n}{p_1p_2p_3p_4}\cdot\left[2p_3-\frac{1}{p_3^{n_3 -1}}(2p_3+8)\right]>0,
\end{align*}
where the last strict inequality holds as $p_3\geq 5$ and $n_3\geq 2$. It then follows that $\kappa(\mathcal{P}(C_n))=\beta_4^1 > \beta_3^{n_3}$, contradicting Proposition \ref{prop-Z-a-s}. This completes the proof.
\end{proof}

We note that Proposition \ref{four_primes} need not be true when $r\geq 5$. In the following example, we provide an integer $n$  with $r=5$,  $n_r=1$ and a minimum cut-set $X$ of $\mathcal{P}(C_n)$ such that $|W(X)|=2$.

\begin{example}\label{example-0}
Let $n=2\cdot 3\cdot 5 \cdot 11 \cdot 13$ and $Y$ be a minimum cut-set of $\mathcal{P}(C_n)$. Here $r=5$, $p_1=2$, $p_2=3$, $p_3=5$, $p_4=11$, $p_5=13$ and $2\phi(p_1p_2p_3p_4)=160 < 330=p_1p_2p_3p_4$. We know that $Y\in \{Z_5^1, X_{4,5}^{1,1}\}$ (see Section \ref{known-results}). It can be calculated using (\ref{eqn-5}) and (\ref{eqn-6}) that
$$|X_{4,5}^{1,1}|=\theta_{4,5}^{1,1}=\phi(n)+198< \phi(n)+250=\beta_5^1=|Z_5^1|.$$
Therefore, $Y=X_{4,5}^{1,1}$ and $|W(Y)|=2$.
\end{example}

\begin{proof}[{\bf Proof of Theorem \ref{main-2}(i)}]

Given that $r=4$, $p_1\in\{2,3\}$, $n_4=1$ and $2\phi(p_1p_2p_3) < p_1p_2p_3$. Let $X$ be a minimum cut-set of $\mathcal{P}(C_n)$. Then $|W(X)|\leq 1$ by Proposition \ref{four_primes}. So $X=Z_4^1$ or $X\in \left\{Z_a^{n_a}\mid a\in \{2,3\}, n_a\geq 2,\; 2\phi\left(\frac{p_1p_2p_3p_4}{p_a}\right) < \frac{p_1p_2p_3p_4}{p_a}\right\}$ by Propositions \ref{facts} and \ref{compa-1}. This gives
$$\kappa\left(\mathcal{P}\left(C_n\right)\right)\in\left\{\beta_4^1, \beta_i^{n_i}\mid i\in \{2,3\}, n_i\geq 2,\; 2\phi\left(\frac{p_1p_2p_3p_4}{p_i}\right) < \frac{p_1p_2p_3p_4}{p_i}\right\}.$$
Suppose that $n_2\geq 2$ and $2\phi\left(p_1p_3p_4\right)< p_1p_3p_4$. If $p_1=3$, then   $$\frac{\phi\left(p_1p_3p_4\right)}{p_1p_3p_4}=\left(1-\frac{1}{p_1}\right)\left(1-\frac{1}{p_3}\right)\left(1-\frac{1}{p_4}\right)
\geq\left(1-\frac{1}{3}\right)\left(1-\frac{1}{7}\right)\left(1-\frac{1}{11}\right)=\frac{40}{77}>\frac{1}{2}$$
gives that $2\phi\left(p_1p_3p_4\right)> p_1p_3p_4$, which is against our assumption. So $p_1=2$.
Then, using (\ref{eqn-5}) and the fact that $2\phi\left(p_1p_3p_4\right) < p_1p_3p_4$, we get
\begin{align*}
\beta_2^{n_2}-\beta_4^{1} & > \frac{n}{p_1p_2p_3p_4}\cdot\left[\phi\left(p_1p_3p_4\right)+\phi(p_1p_2p_3)- p_1p_2p_3\right]\\
& = \frac{n}{2p_2p_3p_4}\cdot\left[\phi\left(p_3p_4\right)+\phi(p_2p_3)- 2p_2p_3\right]\\
& = \frac{n}{2p_2p_3p_4}\cdot\left[(p_3-1)(p_4-1)-(p_2+1)(p_3+1)+2\right] >0.
\end{align*}
This implies that $\kappa\left(\mathcal{P}\left(C_n\right)\right)\neq \beta_2^{n_2}$ and so $X\neq Z_2^{n_2}$. Therefore, $X=Z_4^1$ or $Z_3^{n_3}$, where the latter possibility occurs only when $n_3\geq 2$ and $2\phi\left(p_1p_2p_4\right) < p_1p_2p_4$. This completes the proof.
\end{proof}

In the following example, we provide integers $n$ with $r=4$, $n_4=1$, $n_3\geq 2$ and $2\phi(p_1p_2p_3) < p_1p_2p_3$ such that $\kappa(\mathcal{P}(C_n))=\beta_3^{n_3}$ in Theorem \ref{main-2}(i).

\begin{example}\label{example-1}
Let $n=2^{n_1}\cdot 3^{n_2} \cdot 5^{n_3} \cdot 7$, where $n_3\geq 2$. Here $r=4$, $p_1=2$, $p_2=3$, $p_3=5$, $p_4=7$, $n_r=n_4=1$ and $2\phi(p_1p_2p_3)=16 < 30=p_1p_2p_3$.  By Theorem \ref{main-2}(i), we know that $\kappa\left(\mathcal{P}\left(C_n\right)\right)=\min \{\beta_3^{n_3},\beta_4^1\}$.
For $j\in\{3,4\}$, we have $\beta_j^{n_j}=\phi(n)+\frac{n}{p_1p_2p_3p_4}\cdot u_j$, where
\begin{align*}
u_4 & =p_1p_2p_3-\phi(p_1p_2p_3)=30- 8 =22,\\
u_3 &= \frac{1}{p_3^{n_3-1}}\cdot \left[p_1p_2p_4 - 2\phi\left(p_1p_2p_4\right)\right]+\phi\left(p_1p_2p_4\right)=\frac{18}{5^{n_3-1}} + 12.
\end{align*}
Since $n_3\geq 2$, we have $u_3\leq \frac{18}{5}+12 < 22=u_4$. It then follows that $\beta_4^ 1\neq \beta_3^{n_3}$ and $\kappa(\mathcal{P}(C_n))=\beta_3^{n_3}$. This implies that $Z_3^{n_3}$ is the only minimum cut-set of $\mathcal{P}(C_n)$.
\end{example}

\subsection{The case $r=5$}

We need the following lemma when $r=5$ and $p_1=3$.

\begin{lemma}\label{r=5}
Let $r=5$ and $p_1=3$. Then  $\phi\left(\frac{p_1p_2p_3p_4p_5}{p_i}\right)+\phi(p_1p_2p_3p_4)> p_1p_2p_3p_4$ for $i\in \{1,2,3\}$.
\end{lemma}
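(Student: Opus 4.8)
The plan is to reduce the claimed inequality to a bound on $\tfrac{m}{\phi(m)}$ for a squarefree product $m$ of three primes, and then exploit the hypothesis $p_1=3$ together with the trivial prime gaps $p_{j+1}\ge p_j+2$.

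Set $M:=p_1p_2p_3p_4$. Since neither $p_i$ nor $p_5$ divides $M/p_i$, multiplicativity of $\phi$ gives $\phi\!\left(\frac{p_1p_2p_3p_4p_5}{p_i}\right)=\phi\!\left(\frac{M}{p_i}\right)(p_5-1)$ and $\phi(M)=\phi\!\left(\frac{M}{p_i}\right)(p_i-1)$. So the assertion is equivalent to $\phi\!\left(\frac{M}{p_i}\right)(p_i+p_5-2)>M$, and dividing by $\phi(M/p_i)$ and using $M=p_i\cdot\frac{M}{p_i}$ it becomes
$$p_i+p_5-2 \;>\; p_i\prod_{j\in\{1,2,3,4\}\setminus\{i\}}\frac{p_j}{p_j-1}.$$
Because $p_1=3$ we have $p_2\ge 5$, $p_3\ge 7$, $p_4\ge 11$, $p_5\ge 13$, and, being distinct primes $\ge 3$, any two consecutive ones satisfy $p_{j+1}\ge p_j+2$.

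For $i\in\{1,2\}$, since $x\mapsto x/(x-1)$ is decreasing, the product is bounded above by replacing each $p_j$ with its minimal admissible value: $\frac54\cdot\frac76\cdot\frac{11}{10}<2$ when $i=1$, and $\frac32\cdot\frac76\cdot\frac{11}{10}<2$ when $i=2$. Hence the right-hand side is $<2p_i$, which is $<6\le p_1+p_5-2$ for $i=1$ (as $p_5\ge 13$), and $<2p_2\le p_2+p_5-2$ for $i=2$ (as $p_5>p_2$ forces $p_5\ge p_2+2$). For $i=3$ the index set is $\{1,2,4\}$, so the right-hand side is $p_3\cdot\frac32\cdot\frac{p_2}{p_2-1}\cdot\frac{p_4}{p_4-1}\le p_3\cdot\frac{15}{8}\cdot\frac{p_4}{p_4-1}$; using $p_4\ge p_3+2$ gives $\frac{p_4}{p_4-1}\le\frac{p_3+2}{p_3+1}$, while $p_5\ge p_4+2\ge p_3+4$ gives $p_3+p_5-2\ge 2p_3+2$. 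Thus it suffices to verify $2p_3+2>\frac{15}{8}\cdot\frac{p_3+2}{p_3+1}\cdot p_3$, which after clearing denominators is $16(p_3+1)^2>15p_3(p_3+2)$, i.e. $p_3^2+2p_3+16>0$ — true for every $p_3$.

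The only delicate case is $i=3$: the crude estimate $\frac{M/p_3}{\phi(M/p_3)}<\frac{33}{16}$ combined only with $p_5\ge p_3+4$ is not enough for large $p_3$, so one genuinely needs the sharper bound $\frac{p_4}{p_4-1}\le\frac{p_3+2}{p_3+1}$ coming from the gap $p_4\ge p_3+2$; with it the comparison collapses to an everywhere-true quadratic. (This is consistent with restricting to $i\in\{1,2,3\}$: for $i=4$ the analogous inequality already fails at $(p_1,\dots,p_5)=(3,5,7,11,13)$.)
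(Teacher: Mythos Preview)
Your proof is correct. After the common reduction to the inequality $p_i+p_5-2>p_i\prod_{j\ne i}\frac{p_j}{p_j-1}$, your treatment of the critical case $i=3$ differs genuinely from the paper's. The paper first reduces to $i=3$ by the monotonicity $\phi(p_2p_3p_4p_5)>\phi(p_1p_3p_4p_5)>\phi(p_1p_2p_4p_5)$ and then uses the arithmetic observation that among $p_3,\,p_3+2,\,p_3+4$ one is divisible by $3$, forcing $p_5-p_3\ge 6$; this stronger gap makes the remaining estimate a short chain of inequalities. You instead use only the trivial gaps $p_4\ge p_3+2$ and $p_5\ge p_3+4$, but exploit the first of these to sharpen the product bound via $\frac{p_4}{p_4-1}\le\frac{p_3+2}{p_3+1}$, reducing everything to the always-true quadratic $p_3^2+2p_3+16>0$. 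The paper's route is slicker once the mod-$3$ trick is spotted; your route is more elementary in that it needs no divisibility argument beyond parity, at the cost of a small algebraic verification.
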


\begin{proof}
Since $\phi\left(p_2p_3p_4p_5\right)> \phi\left(p_1p_3p_4p_5\right)> \phi\left(p_1p_2p_4p_5\right)$, it is enough to prove that
$\phi\left(p_1p_2p_4p_5\right)+\phi(p_1p_2p_3p_4)> p_1p_2p_3p_4$. As $p_1=3$, we need to show that  $$2\left[\phi(p_5)+\phi(p_3)\right]\phi(p_2p_4)>3p_2p_3p_4.$$
Since $p_3$ is a prime and $p_3 >3$, one of $p_3+2$ and $p_3+4$ must be divisible by $3$. So $p_3,p_4,p_5$ cannot be consecutive twin primes and hence $p_5-p_3\geq 6$. Then
$$2\left[\phi(p_5)+\phi(p_3)\right]\geq 4(p_3+2)>3\cdot \frac{5}{4}\cdot (p_3+2)> 3\cdot \frac{5}{4}\cdot \frac{p_3p_4}{\phi(p_4)}.$$
As $p_2\geq 5$, we have $\frac{5}{4}\geq \frac{p_2}{\phi(p_2)}$. Then
$2\left[\phi(p_5)+\phi(p_3)\right]> 3\cdot \frac{p_2}{\phi(p_2)}\cdot \frac{p_3p_4}{\phi(p_4)}$ and hence
$2\left[\phi(p_5)+\phi(p_3)\right]\phi(p_2p_4)>3p_2p_3p_4.$
\end{proof}

\begin{proof}[{\bf Proof of Theorem \ref{main-2}(ii)}]
Given that $r=5$, $p_1 = 3$, $n_r=1$ and $2\phi(p_1p_2p_3p_{4}) < p_1p_2p_3p_{4}$.
Let $X$ be a minimum cut-set of $\mathcal{P}(C_n)$.
Since $p_1= 3$, we have $3\phi(p_1p_2p_3p_{4}) > p_1p_2p_3p_{4}$ by Lemma \ref{p1_lower_bound}, and so $|W(X)|\leq 1$ by Corollary \ref{3_phi}. Then, by Propositions \ref{facts} and \ref{compa-1}, we get $X=Z_5^1$ or $X\in \left\{Z_a^{n_a}\mid a\in \{2,3,4\}, n_a\geq 2,\; 2\phi\left(\frac{p_1p_2p_3p_4p_5}{p_a}\right) < \frac{p_1p_2p_3p_4p_5}{p_a}\right\}$ and hence
$$\kappa\left(\mathcal{P}\left(C_n\right)\right)\in\left\{\beta_5^1, \beta_i^{n_i}\mid i\in \{2,3,4\}, n_i\geq 2,\; 2\phi\left(\frac{p_1p_2p_3p_4p_5}{p_i}\right) < \frac{p_1p_2p_3p_4p_5}{p_i}\right\}.$$
Let $i\in \{2,3\}$ such that $n_i\geq 2$ and $2\phi\left(\frac{p_1p_2p_3p_4p_5}{p_i}\right)< \frac{p_1p_2p_3p_4p_5}{p_i}$. Then, using (\ref{eqn-5}), it can be seen that
$$\beta_i^{n_i}-\beta_5^{1} > \frac{n}{p_1p_2\cdots p_5}\cdot\left[\phi\left(\frac{p_1p_2p_3p_4p_5}{p_i}\right)+\phi(p_1p_2p_3p_4)- p_1p_2p_3p_4\right]>0,$$
where the last inequality holds by Lemma \ref{r=5}. This implies that $\kappa\left(\mathcal{P}\left(C_n\right)\right)\notin \{\beta_2^{n_2},\beta_3^{n_3}\}$ by Proposition \ref{prop-Z-a-s} and so $X\notin\{Z_2^{n_2}, Z_3^{n_3}\}$. Thus, $X=Z_5^1$ or $Z_4^{n_4}$, where the latter possibility occurs only when $n_4\geq 2$ and $2\phi\left(p_1p_2p_3p_5\right) < p_1p_2p_3p_5$. This completes the proof.
\end{proof}

In the following example, we provide integers $n$ with $r=5$, $p_1=3$, $n_5=1$, $n_4\geq 2$ and $2\phi(p_1p_2p_3p_4) < p_1p_2p_3p_4$ such that $\kappa(\mathcal{P}(C_n))=\beta_4^{n_4}$ in Theorem \ref{main-2}(ii).

\begin{example}\label{example-2}
Let $n=3^{n_1} \cdot 5^{n_2} \cdot 7^{n_3} \cdot 11^{n_4} \cdot 13$, where $n_4\geq 2$. Here $r=5$, $p_1=3$, $p_2=5$, $p_3=7$, $p_4=11$, $p_5=13$, $n_r=n_5=1$ and $2\phi(p_1p_2p_3p_4)= 960 < 1155=p_1p_2p_3p_4$. By Theorem \ref{main-2}(ii), we know that $\kappa\left(\mathcal{P}\left(C_n\right)\right)=\min\{\beta_4^{n_4},\beta_5^1\}$. For $j\in\{4,5\}$, we have $\beta_j^{n_j}=\phi(n)+\frac{n}{p_1p_2p_3p_4p_5}\cdot w_j$, where
\begin{align*}
w_5 & =p_1p_2p_3p_4 -\phi(p_1p_2p_3p_4) =675,\\
w_4 &=\frac{1}{11^{n_4-1}}[p_1p_2p_3p_5 - 2\phi(p_1p_2p_3p_5)]+\phi(p_1p_2p_3p_5) = \frac{1}{11^{n_4 -1}}\cdot 213 +576.
\end{align*}
Since $n_4\geq 2$, we have $w_4\leq \frac{213}{11}+ 576 < 675=w_5$. It then follows that $\beta_5^ 1\neq \beta_4^{n_4}$ and that $\kappa(\mathcal{P}(C_n))=\beta_4^{n_4}$. This implies that $Z_4^{n_4}$ is the only minimum cut-set of $\mathcal{P}(C_n)$.
\end{example}

\vskip .5cm

\noindent{\bf Acknowledgements.} Sanjay Mukherjee is supported by Grant No. 09/1248(0004)/2019-EMR-I of the Council of Scientific and Industrial Research (CSIR), Government of India. Kamal Lochan Patra is partially supported by Project No. MTR/2022/000424 of the Science and Engineering Research Board, Government of India. Binod Kumar Sahoo is partially supported by Project No. CRG/2022/000344 of the Science and Engineering Research Board, Government of India.

\vskip .5cm

\noindent{\bf Addresses}: Sanjay Mukherjee, Kamal Lochan Patra, Binod Kumar Sahoo

\begin{enumerate}
\item[1)] School of Mathematical Sciences\\
National Institute of Science Education and Research (NISER), Bhubaneswar\\
P.O.- Jatni, District- Khurda, Odisha--752050, India

\item[2)] Homi Bhabha National Institute (HBNI)\\
Training School Complex, Anushakti Nagar, Mumbai--400094, India
\end{enumerate}

\noindent {\bf E-mails}: sanjay.mukherjee@niser.ac.in, klpatra@niser.ac.in, bksahoo@niser.ac.in

\begin{thebibliography}{99}
\bibitem{AKC} J. Abawajy, A. Kelarev and M. Chowdhury, Power graphs: a survey, {\it Electron. J. Graph Theory Appl.} 1 (2013), no. 2, 125--147.
\bibitem{cam-2} P. J. Cameron, The power graph of a finite group, II, {\it J. Group Theory} 13 (2010), no. 6, 779--783.

\bibitem{cam-1} P. J. Cameron and S. Ghosh, The power graph of a finite group, {\it Discrete Math.} 311 (2011), no. 13, 1220--1222.

\bibitem{Cameron-SS} P. J. Cameron, V. V. Swathi and M. S. Sunitha, Matching in power graphs of finite groups, {\it Ann. Comb.} 26 (2022), no.2, 379--391.
\bibitem{ivy} I. Chakrabarty, S. Ghosh and M. K. Sen, Undirected power graphs of semigroups, {\it Semigroup Forum} 78 (2009), no. 3, 410--426.
\bibitem{cps} S. Chattopadhyay, K. L. Patra and B. K. Sahoo, Vertex connectivity of the power graph of a finite cyclic group, {\it Discrete Appl. Math.} 266 (2019), 259--271.
\bibitem{cps-2} S. Chattopadhyay, K. L. Patra and B. K. Sahoo, Vertex connectivity of the power graph of a finite cyclic group II, {\it J. Algebra Appl.} 19 (2020), no. 2, 2050040, 25 pp.
\bibitem{cps-2-E} S. Chattopadhyay, K. L. Patra and B. K. Sahoo, Erratum: Vertex connectivity of the power graph of a finite cyclic group II, {\it J. Algebra Appl.} 19 (2020), no. 2, 2092001, 2 pp.
\bibitem{cps-1} S. Chattopadhyay, K. L. Patra and B. K. Sahoo, Minimal cut-sets in the power graphs of certain finite non-cyclic groups,  {\it Comm. Algebra} 49 (2021), no. 3, 1195--1211.
\bibitem{cur} B. Curtin and G. R. Pourgholi, Edge-maximality of power graphs of finite cyclic groups, {\it J. Algebraic Combin.} 40 (2014), no. 2, 313--330.
\bibitem{cur-1} B. Curtin and G. R. Pourgholi, An Euler totient sum inequality, {\it J. Number Theory} 163 (2016), 101--113.
\bibitem{dooser} A. Doostabadi, A. Erfanian and A. Jafarzadeh, Some results on the power graphs of finite groups, {\it ScienceAsia} 41 (2015), no. 1, 73--78.
\bibitem{FMW} M. Feng, X. Ma and K. Wang, The structure and metric dimension of the power graph of a finite group, {\it European J. Combin.} 43 (2015), 82--97.
\bibitem{FMW-1} M. Feng, X. Ma and K. Wang, The full automorphism group of the power (di)graph of a finite group, {\it European J. Combin.} 52 (2016), 197--206.
\bibitem{ker1} A. V. Kelarev and S. J. Quinn, A combinatorial property and power graphs of groups,  {\it Contributions to General Algebra}, 12 (Vienna, 1999), 229--235, Verlag Johannes Heyn, Klagenfurt, 2000.
\bibitem{kirkland} S. Kirkland, A. R. Moghaddamfar, S. Navid Salehy, S. Nima Salehy and M. Zohourattar, The complexity of power graphs associated with finite groups, {\em Contrib. Discrete Math.} 13 (2018), no. 2, 124--136.
\bibitem{KSCC} A. Kumar, L. Selvaganesh, P. J. Cameron and T. Tamizh Chelvam, Recent developments on the power graph of finite groups - a survey, {\em AKCE Int. J. Graphs Comb.} 18 (2021), no. 2, 65--94.
\bibitem{MaFeng2015} X. Ma and M. Feng, On the chromatic number of the power graph of a finite group, {\it Indag. Math. (N.S.)} 26 (2015), no. 4, 626--633.
\bibitem{MaFengWang2018} X. Ma, M. Feng and K. Wang, The strong metric dimension of the power graph of a finite group, {\it Discrete Appl. Math.} 239 (2018), 159--164.
\bibitem{ma-fu-lu} X. Ma, R. Fu and X. Lu, On the independence number of the power graph of a finite group, {\it Indag. Math. (N.S.)} 29 (2018), no. 2, 794--806.
\bibitem{Ma} X. Ma and L. Zhai, Strong metric dimensions for power graphs of finite groups, {\it Comm. Algebra} 49 (2021), no. 11, 4577--4587.
\bibitem{mir} M. Mirzargar, A. R. Ashrafi and M. J. Nadjafi-Arani, On the power graph of a finite group, {\it Filomat} 26 (2012), no. 6, 1201--1208.
\bibitem{mir-1} M. Mirzargar, R. Scapellato, Finite groups with the same power graph, {\it Comm. Algebra} 50 (2022), no. 4, 1400--1406.
\bibitem{MPS} S. Mukherjee, K. L. Patra and B. K. Sahoo, On the minimum cut-sets of the power graph of a finite cyclic group, \textit{J. Algebra Appl.} 23 (2024), no. 11, 2450176, 22 pp.
\bibitem{panda} R. P. Panda and K. V. Krishna, On connectedness of power graphs of finite groups, {\it J. Algebra Appl.} 17 (2018), no. 10, 1850184, 20 pp.
\bibitem{shitov} Y. Shitov, Coloring the power graph of a semigroup, {\it Graphs Combin.} 33 (2017), no. 2, 485--487.
\end{thebibliography}
\end{document}